\title{\bf The It\^o SDEs and Fokker--Planck equations with Osgood and Sobolev coefficients}
\author{Dejun Luo\footnote{Email: luodj@amss.ac.cn. The author is supported by the National Natural Science Foundation of China (11571347), the Seven Main Directions (Y129161ZZ1) and the Special Talent Program of the Academy of Mathematics and Systems Science, Chinese Academy of Sciences.} \vspace{2mm} \\
{\footnotesize Key Laboratory of Random Complex Structures and Data Sciences, } \\
{\footnotesize Academy of Mathematics and Systems Science, Chinese Academy of Sciences, Beijing 100190, China}}
\date{}
\def\R{\mathbb{R}}
\def\E{\mathbb{E}}
\def\P{\mathbb{P}}
\def\F{\mathcal{F}}
\def\L{\mathcal{L}}
\def\ch{{\bf 1}}
\def\div{\textup{div}}
\def\d{\textup{d}}
\def\supp{\textup{supp}}
\def\<{\langle}
\def\>{\rangle}
\def\XXint#1#2#3{{\setbox0=\hbox{$#1{#2#3}{\int}$}
\vcenter{\hbox{$#2#3$}}\kern-.5\wd0}}
\begin{document}

\maketitle
\makeatletter 
\renewcommand\theequation{\thesection.\arabic{equation}}
\@addtoreset{equation}{section}
\makeatother 

\newtheoremstyle{newthm}
 {3pt}
 {3pt}
 {\itshape}
 {}
 {}
 {\textbf{.}}
 {.5em}
 {\thmname{\textbf{#1}}\thmnumber{ \textbf{#2}}\thmnote{ {\textbf{(#3)}}}}

\theoremstyle{newthm}

\newtheorem{theorem}{Theorem}[section]
\newtheorem{lemma}[theorem]{Lemma}       
\newtheorem{corollary}[theorem]{Corollary}
\newtheorem{proposition}[theorem]{Proposition}
\newtheorem{remark}[theorem]{Remark}
\newtheorem{example}[theorem]{Example}
\newtheorem{definition}[theorem]{Definition}

\vspace{-5mm}

\begin{abstract}
We study the degenerated It\^o SDE on $\R^d$ whose drift coefficient only fulfills a mixed Osgood and Sobolev regularity. Under suitable assumptions on the gradient of the diffusion coefficient and on the divergence of the drift coefficient, we prove the existence and uniqueness of generalized stochastic flows associated to such equations. We also prove the uniqueness of solutions to the corresponding Fokker--Planck equation by using the probabilistic method.
\end{abstract}


\textbf{Keywords:} Stochastic differential equation, Osgood and Sobolev condition, DiPerna--Lions theory, Fokker--Planck equation, stochastic flow

\textbf{MSC 2010:} Primary 60H10; secondary 35Q84

\section{Introduction}

Let $\sigma:\R^d\to \mathcal M_{d\times m}$ be a matrix-valued function and $b:\R^d\to \R^d$ a vector field on $\R^d$. We consider the following stochastic differential equation (SDE for short)
  \begin{equation}\label{Ito-SDE}
  \d X_t=\sigma(X_t)\,\d B_t+b(X_t)\,\d t,
  \end{equation}
where $B_t$ is an $m$-dimensional standard Brownian motion defined on some probability space $(\Omega,\F,\P)$. According to the classical theory on SDE, if $\sigma$ and $b$ are globally Lipschitz continuous, then the equation \eqref{Ito-SDE} generates a unique stochastic flow $X_t$ of homeomorphisms on $\R^d$. In the pioneer work \cite{Malliavin}, Malliavin constructed the canonical Brownian motion on the diffeomorphism group of the circle $S^1$, see \cite{Fang} for a more detailed construction and \cite{AiraultRen} for the explicit modulus of H\"older continuity of the flow associated to the canonical Brownian motion. Since then, there have been intensive studies on SDEs with non-Lipschitz coefficients, see for instance \cite{FangZhang03, RenZhang, Fang04, FangZhang, RenZhang05, Zhang05}. In particular, the existence of a unique strong solution to \eqref{Ito-SDE} was proved in \cite{FangZhang} under the general Osgood type condition. In the case that the coefficients have the log-Lipschitz continuity, X. Zhang \cite{Zhang05} established the homeomorphic property of the stochastic flow by following Kunita's approach (cf. \cite[Section 4.5]{Kunita90}).

If $\sigma\equiv 0$, then \eqref{Ito-SDE} reduces to an ordinary differential equation (abbreviated as ODE):
  \begin{equation}\label{ODE}
  \frac{\d X_t}{\d t}=b(X_t).
  \end{equation}
In recent years, the study of ODEs with weakly differentiable coefficients attracted lots of attentions, see e.g. \cite{DiPernaLions89, Ambrosio04, CiprianoCruzeiro05} for the finite dimensional case, \cite{AmbrosioFigalli09, FangLuo10, Trevisan} for extensions on the Wiener space, \cite{Dumas, Zhang11, FangLiLuo} for studies on Riemannian manifolds.  The readers can find a survey of some of these results in \cite{Ambrosio11}. In these works (except \cite{Zhang11}), the existence and uniqueness of quasi-invariant flows $X_t$ generated by \eqref{ODE} are deduced from the well-posedness of the corresponding transport equation or the continuity equation. By making use of the pointwise characterization of Sobolev functions in $W^{1,p}_{loc}(\R^d)$ (see \eqref{Sobolev.1} below), Crippa and de Lellis \cite{CrippadeLellis} are able to give a direct construction of the flow $X_t$. This method was developed in \cite{BouchutCrippa13} to show the well posedness of \eqref{ODE} when the gradient of $b$ is given by some singular integral. Following this direct method, there are also studies on the SDE \eqref{Ito-SDE} with coefficients having Sobolev regularity \cite{Zhang10, FangLuoThalmaier, Zhang13, Luo15b}. Regarding the corresponding PDE, Le Bris and Lions studied in \cite{LeBrisLions08} the Fokker--Planck type equations with Sobolev coefficients; using Ambrosio's commutator estimate for BV vector fields, their results was slightly extended in \cite{Luo13} to the case where the drift coefficient has only BV regularity. Based on a representation formula for the solutions to Fokker--Planck equations (see \cite[Theorem 2.6]{Figalli}), the uniqueness was established in \cite{RocknerZhang10, Luo14} when the coefficients are bounded and have weak spatial regularity.

This paper is a continuation of the work \cite{Luo15}, where the authors propose a unified framework for ODEs \eqref{ODE} under the mixed Osgood and Sobolev conditions on the coefficient $b$. The spaces consist of these kind of functions have been studied intensively in the past two decades, even in the case where the underlying space is the general metric measure space (motivated by the pioneer work \cite{Hajlasz}); see \cite{Ivanishko} for the equivalence of different Sobolev spaces and \cite{IK} for the compactness of embeddings of Sobolev type. Our purpose is to extend the main results in \cite{Luo15} to the case of the It\^o SDE \eqref{Ito-SDE}. Compared to equation  \eqref{ODE}, a big difference in the stochastic setting is that the estimate of the Radon--Nikodym density involves the gradient of the diffusion coefficient $\sigma$ (see e.g. Lemma \ref{sect-2-lem-4} in the current paper), which implies that $\sigma$ naturally has some Sobolev regularity. Therefore, the mixed Osgood and Sobolev regularity can only be imposed  on the drift coefficient $b$ in \eqref{Ito-SDE}. The first main result of this paper also extends \cite[Theorem 2.3]{Luo15b}, in which the drift coefficient $b$ is required to be in the first order Sobolev space. We remark that when the equation \eqref{Ito-SDE} has non-degenerate diffusion coefficient $\sigma$, the existence of a unique strong solution can be proved under quite weak conditions on the drift $b$, see \cite{KrylovRockner, FF13b} for integrability conditions and \cite{FGP10a, Wang} for weak continuity conditions.

This paper is organized as follows. In Section 2, we first give the meaning of the generalized stochastic flow associated to \eqref{Ito-SDE}, then we state the mixed Osgood and Sobolev condition $(\mathbf{H}_q)\, (q\geq 1)$ and provide an example of functions satisfying such condition. The main results consist of three theorems: the first one (Theorem \ref{1-thm-0}) allows the diffusion coefficient $\sigma$ to be in some Sobolev space but requires that $b$ fulfills $(\mathbf{H}_q)$ with $q>1$; in Theorem \ref{1-thm-1}, we assume $(\mathbf{H}_1)$ on the drift $b$ and that $\sigma$ is smooth, which is mainly due to the estimate of the Radon--Nikodym density of stochastic flows; the last main result (Theorem \ref{4-thm}) proves the uniqueness of the related Fokker--Planck equation under the Sobolev regularity on $\sigma$ and the mixed Osgood and Sobolev condition $(\mathbf{H}_1)$ on $b$. The subsequent three sections are devoted to the proofs of these theorems, respectively.

\section{Preparations and main results}

First, we present the precise definition of the generalized stochastic flow (cf. \cite[Definition 5.1]{FangLuoThalmaier} and \cite[Definition 2.1]{Zhang13}) associated to \eqref{Ito-SDE}. As usual, the space of continuous functions on $\R^d$ is denoted by $C([0,T],\R^d)$. Let $\mu$ be a locally finite measure on $\R^d$ which is absolutely continuous with respect to the Lebesgue measure $\L^d$. For a measurable map $\varphi:\R^d\to\R^d$, we write $\varphi_\#\mu =\mu \circ\varphi^{-1}$ for the push-forward of $\mu$ by $\varphi$ (also called the distribution of $\varphi$ under $\mu$).

\begin{definition}\label{sect-2-def}
A measurable map $X\colon\Omega\times\R^d\to C([0,T],\R^d)$ is called a $\mu$-a.e. stochastic flow associated to the It\^{o} SDE \eqref{Ito-SDE} if
\begin{enumerate}
\item[\rm(i)] for each $t\in [0,T]$ and almost all $x\in\R^d$, $\omega\to X_t(\omega,x)$ is measurable with respect to $\F_t$, i.e., the natural filtration generated by the Brownian motion $\{B_s\colon s\leq t\}$;

\item[\rm(ii)] there exists a nonnegative function $K:[0,T] \times\Omega \times\R^d\to \R_+$ such that for each $t\in [0,T]$, $(X_t(\omega,\cdot))_\#\mu =K_t\mu$;

\item[\rm(iii)] for $(\P\times\mu)$-a.e. $(\omega,x)$,
  \begin{equation*}
  \int_0^T|\sigma (X_s(\omega,x))|^2\,\d s+ \int_0^T|b (X_s(\omega,x))|\,\d s<+\infty;
  \end{equation*}
\item[\rm(iv)] for $\mu$-a.e. $x\in\R^d$, the integral equation below holds almost surely:
  \begin{equation*}
  X_t(\omega,x)=x+\int_0^t \sigma (X_s(\omega,x))\,\d B_s+\int_0^t b (X_s(\omega,x))\,\d s,\quad \mbox{for all } t\in [0,T].
  \end{equation*}
\end{enumerate}
\end{definition}

Throughout this paper, we fix a nondecreasing function $\rho\in C^1(\R_+,\R_+)$ which satisfies $\rho(0)=0$ and $\int_{0+}\frac{\d s}{\rho(s)}=\infty$. Without loss of generality, we assume $\rho(s)\geq s$ for all $s\geq0$. Typical examples for the function $\rho(s)$ are $s,\, s\log\frac1s,\, s(\log\frac1s)(\log\log\frac1s),\cdots$. Although the latter two functions are only well defined on a small neighborhood of the origin, we can extend them to the whole positive half line by piecing them together with linear functions. For example,
  \begin{equation}\label{Osgood-Sobolev.3}
  \rho(s)=\begin{cases}
  s\log\frac1s, & s\in [0,e^{-2}];\\
  s+e^{-2}, & s\in (e^{-2},\infty).
  \end{cases}
  \end{equation}
Alternatively, one can directly use $\rho(s)=s\log\big(\frac1s +e\big)$. For any $\delta>0$, we define the following auxiliary function
  $$\psi_\delta(\xi)=\int_0^\xi \frac{\d s}{\rho(s) +\delta},\quad \xi>0.$$
Note that $\lim_{\delta \downarrow 0} \psi_\delta(\xi)=\infty$ for all $\xi>0$. Moreover,
  \begin{equation}\label{auxi-funct}
  \psi'_\delta(\xi)=\frac1{\rho(\xi) +\delta}>0,\quad \psi''_\delta(\xi)=-\frac{\rho'(\xi)}{(\rho(\xi) +\delta)^2}\leq 0.
  \end{equation}
This property shows that $\psi_\delta$ is a concave function for any $\delta>0$. If $\rho(s) \equiv s$ for all $s\geq 0$, then $\psi_\delta(\xi)= \log\big(1+\frac{\xi}\delta\big)$ which is the auxiliary function used in the previous works \cite{CrippadeLellis, Zhang10, FangLuoThalmaier, Luo15b}.

Let $q\geq 1$ be fixed. We are now ready to introduce the following hypothesis.

\begin{enumerate}
\item[$(\mathbf{H}_q)$] For any $R>0$, there exist a nonnegative function $g_R\in L^q_{loc}(\R^d)$ and negligible subset $N$, such that for all $x,y\notin N$ with $|x-y|\leq R$, one has
  \begin{equation}\label{Osgood-Sobolev.1}
  |\<x-y,b(x)-b(y)\>|\leq \big(g_R(x) + g_R(y)\big) \rho(|x-y|^2).
  \end{equation}
\end{enumerate}

Here is an example of functions satisfying $(\mathbf{H}_q)$.

\begin{example}\label{1-exa-1} \rm
Take $b_1\in W^{1,q}_{loc}(\R^d,\R^d)$. If $q=1$, we require further that $|\nabla b_1| \in (L^1\log L^1)_{loc}$. It is well known that
  \begin{equation}\label{Sobolev.1}
  |b_1(x)-b_1(y)|\leq C_d\big(M_R |\nabla b_1|(x)+M_R |\nabla b_1|(y)\big)|x-y| \quad \mbox{for a.e. } x,y \mbox{ with } |x-y|\leq R,
  \end{equation}
where $C_d$ is a dimensional constant and $M_R |\nabla b_1|$ is the local maximal function of $|\nabla b_1|$:
  $$M_R |\nabla b_1|(x)=\sup_{0<r\leq R}\frac1{\L^d(B_r)}\int_{B_r} |\nabla b_1|(x+y)\,\d y,\quad x\in\R^d,$$
in which $B_r$ is the ball centered at origin with radius $r$. Moreover, for any $\lambda >0$, if $q=1$, then
  $$\int_{B_\lambda} M_R |\nabla b_1|(x)\,\d x\leq C_d \int_{B_{R+\lambda}} |\nabla b_1(x)| \log(1+|\nabla b_1(x)| )\,\d x;$$
while if $q>1$, then
  \begin{equation}\label{Sobolev.2}
  \int_{B_\lambda} \big(M_R |\nabla b_1|(x)\big)^q\,\d x\leq C_{d,q} \int_{B_{R+\lambda}} |\nabla b_1(x)|^q \,\d x.
  \end{equation}

Next, let $b_2(x)=(V(x_1),\cdots, V(x_d))$ with
  $$V(t)=\sum_{k=1}^\infty \frac{|\sin kt|}{k^2},\quad t\in\R.$$
Then by \cite[(2.12)]{FangZhang}, we have
  $$|b_2(x)-b_2(y)|\leq C_0 d \rho(|x-y|) \quad \mbox{for all } x,y\in\R^d,$$
where $\rho(s)$ is given in \eqref{Osgood-Sobolev.3}. Now it is easy to show that the vector field $b=b_1+b_2$ satisfies $(\mathbf{H}_q)$ with $g_R=C'_d(1+M_R|\nabla b_1|)$.
\end{example}

To simplify notations, we write $\bar b(x)=\frac{b(x)}{1+|x|}$ and $\bar\sigma(x)=\frac{\sigma(x)}{1+|x|}$ for $x\in \R^d$. Our first main result extends \cite[Theorem 2.3]{Luo15b}.

\begin{theorem}\label{1-thm-0}
Let $q>1$ and $\d\mu(x)=(1+|x|^2)^{-q -(d+1)/2}\,\d x$. Assume that
\begin{itemize}
\item[\rm(i)] $\sigma\in W^{1,2}_{loc}$ and $b$ satisfies $(\mathbf{H}_q)$ and the distributional divergence $\div(b)$ exists;
\item[\rm(ii)]  for any $p>0$, one has
  \begin{equation}\label{1-thm-0.1}
  \int_{\R^d} \exp\big\{ p\big[ (\div(b))^-+|\bar b| + |\bar\sigma|^2 +|\nabla\sigma|^2 \big]\big\}\,\d\mu<+\infty.
  \end{equation}
\end{itemize}
Then there exists a unique $\mu$-a.e. stochastic flow $X_t$ associated to the It\^o SDE \eqref{Ito-SDE}. Moreover, the Radon--Nikodym density $K_t:=\frac{\d[(X_t)_\#\mu]}{\d\mu}$ of the flow $X_t$ belongs to $L^\infty([0,T],L^p(\P\times\mu))$ for any $p>1$.
\end{theorem}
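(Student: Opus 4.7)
The plan is to follow the Crippa--de Lellis direct construction adapted to It\^o SDEs (as in \cite{Zhang10, FangLuoThalmaier, Luo15b}), but replacing the usual logarithmic auxiliary function by the concave function $\psi_\delta$ fitted to the Osgood modulus $\rho$, in the spirit of \cite{Luo15}. First I would mollify $(\sigma, b)$ to smooth coefficients $(\sigma_n, b_n)$ of at most linear growth, obtaining a classical stochastic flow $X^n_t$ and the Radon--Nikodym density $K^n_t = \d[(X^n_t)_\#\mu]/\d\mu$. The announced Lemma \ref{sect-2-lem-4} is the key preliminary: by It\^o's formula applied to $\log K^n_t$, the evolution involves $\div(b_n)$ and $|\nabla \sigma_n|^2$ and, because $\mu$ has density $(1+|x|^2)^{-q-(d+1)/2}$, produces additional weights of the form $|\bar b|$ and $|\bar\sigma|^2$; hypothesis \eqref{1-thm-0.1} then yields an $L^p(\P \times \mu)$ bound on $K^n_t$, uniform in $n$ and $t \in [0,T]$.

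Next I would establish a Cauchy-type stability estimate. Writing $Z = X^n_t - X^m_t$, I apply It\^o to $\psi_\delta(|Z|^2)$. The drift contribution $2\psi_\delta'(|Z|^2)\<Z, b_n(X^n) - b_m(X^m)\>$ is, by $(\mathbf{H}_q)$ and the identity $\psi_\delta'(\xi)\rho(\xi) \leq 1$ (see \eqref{auxi-funct}), dominated pointwise by $2\bigl(g_R(X^n) + g_R(X^m)\bigr)$, modulo a mollification error. For the It\^o correction, the second-order term
\begin{equation*}
2\psi_\delta''(|Z|^2)\bigl|(\sigma_n(X^n)-\sigma_m(X^m))^\top Z\bigr|^2
\end{equation*}
has the favorable nonpositive sign by concavity of $\psi_\delta$ and is discarded, while $\psi_\delta'(|Z|^2)|\sigma_n(X^n)-\sigma_m(X^m)|^2$ is controlled by using $\rho(s)\geq s$ (so that $\psi_\delta'(\xi)\xi \leq 1$) together with the pointwise Sobolev bound \eqref{Sobolev.1} applied to $\sigma$, yielding a multiple of $M_R|\nabla\sigma|(X^n)^2 + M_R|\nabla\sigma|(X^m)^2$. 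Integrating against $\mu$, taking expectation, and applying H\"older's inequality with exponents $q$ and $q'$ (where the assumption $q>1$ is crucial) against the density bound on $K^n, K^m$, I transfer these integrals back to the data, finite by \eqref{Sobolev.2} and \eqref{1-thm-0.1}. The result is
\begin{equation*}
\sup_{t\in[0,T]} \int_{\R^d} \E\bigl[\psi_\delta(|X^n_t - X^m_t|^2 \wedge R^2)\bigr]\,\d\mu \leq C_{T,R} + \eta_R(n,m),\quad \eta_R(n,m)\to 0.
\end{equation*}

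Since $\lim_{\delta \downarrow 0} \psi_\delta(\xi) = +\infty$ for every $\xi > 0$, this forces $(X^n_t)$ to be Cauchy in $(\P \times \mu)$-measure uniformly in $t$, with limit $X_t$. The uniform $L^p$ bound on $K^n_t$ combined with $L^p_{loc}$ convergence of $(\sigma_n, b_n)$ enables the passage to the limit in $\int_0^t \sigma_n(X^n_s)\,\d B_s$ and $\int_0^t b_n(X^n_s)\,\d s$, so $X_t$ satisfies Definition \ref{sect-2-def}; the limit density $K_t$ inherits the $L^\infty_T L^p(\P \times \mu)$ bound by Fatou. Uniqueness is obtained by running the same stability estimate directly between two candidate solutions equipped with $L^p$ density bounds and letting $\delta \downarrow 0$.

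The main obstacle is arranging the stability step so that the It\^o correction from $\sigma$ never produces a $\delta$-singular contribution: this forces one to simultaneously exploit the nonpositive sign of $\psi_\delta''$ (from the concavity of the auxiliary function adapted to $\rho$) and the sharp bound $\psi_\delta'(\xi)\xi \leq 1$ to convert the potentially unbounded $\psi_\delta'|\sigma(X^n)-\sigma(X^m)|^2$ into a pure maximal-function quantity on $\nabla\sigma$. The interplay between $\psi_\delta$ adapted to the sub-linear Osgood modulus $\rho$ (handling $b$ via $\psi_\delta'\rho\leq 1$) and the quadratic growth needed for the diffusion (handled via $\psi_\delta'(\xi)\xi\leq 1$) is precisely what the mixed $(\mathbf{H}_q)$--Sobolev framework of this paper is designed to permit.
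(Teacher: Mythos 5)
Your proposal is correct and follows essentially the same route as the paper: mollification, the uniform $L^p(\P\times\mu)$ density estimate from the exponential integrability hypothesis (Lemma \ref{sect-2-lem-4}), a stability estimate obtained by applying It\^o's formula to $\psi_\delta(|Z|^2)$ and exploiting the concavity of $\psi_\delta$, the bounds $\psi'_\delta\rho\leq 1$ and $\psi'_\delta(\xi)\xi\leq 1$, the pointwise Sobolev inequality \eqref{Sobolev.1} for $\sigma$ and $(\mathbf{H}_q)$ for $b$, then H\"older against the densities, and finally the Cauchy/uniqueness arguments via $\psi_\delta(\eta^2)\to\infty$. The only detail you elide is that the paper keeps the supremum in time \emph{inside} the expectation (controlling the martingale term by Burkholder's inequality and localizing with stopping times plus the moment estimate of Lemma \ref{sect-2-lem-1}), so that the convergence holds in $C([0,T],\R^d)$ rather than merely in measure for each fixed $t$.
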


\begin{remark}\label{1-remark-0} \rm
The condition \eqref{1-thm-0.1} has the following consequences:
\begin{itemize}
\item[(a)] By the Sobolev embedding theorem, the diffusion coefficient $\sigma$ is H\"older continuous.
\item[(b)] As noted in \cite[Remark 2.2(ii)]{Luo15b}, the condition \eqref{1-thm-0.1} implies $\bar\sigma,\bar b\in L^p(\mu)$ for any $p>1$. Moreover, by the choice of the measure $\mu$, if $p$ is sufficiently big, then $\int_{\R^d}(1+|x|)^{2qp/(p-1)} \,\d\mu<+\infty$. H\"older's inequality yields
  $$\int_{\R^d}|\sigma|^{2q}\,\d\mu \leq \bigg[\int_{\R^d}|\bar\sigma|^{2qp}\,\d\mu \bigg]^{1/p} \bigg[\int_{\R^d}(1+|x|)^{2qp/(p-1)} \,\d\mu\bigg]^{(p-1)/p}<+\infty.$$
Thus $\sigma\in L^{2q}(\mu)$. In the same way we have $b\in L^{2q}(\mu)$.
\end{itemize}
\end{remark}

Theorem \ref{1-thm-0} will be proved in Section 3. Under the assumption $(\mathbf{H}_1)$, we need stronger conditions on the diffusion coefficient $\sigma$.

\begin{theorem}\label{1-thm-1}
Assume that
\begin{itemize}
\item[\rm(i)] the diffusion coefficient $\sigma\in C_b^2(\R^d,\mathcal M_{d\times m})$, i.e. it is bounded with bounded spatial derivatives up to order two;
\item[\rm(ii)] the drift coefficient $b\in L^1_{loc}$ satisfies $(\mathbf{H}_1)$ and the distributional divergence $\div(b)$ exists such that
  \begin{equation}\label{1-thm-1.2}
  [\div(b)]^- \in L^\infty(\R^d),\quad \bar b\in L^\infty(B_r^c)\quad \mbox{for some } r>0.
  \end{equation}
\end{itemize}
Then there exists a unique $\L^d$-a.e. stochastic flow $X_t$ generated by It\^o SDE \eqref{Ito-SDE}.
\end{theorem}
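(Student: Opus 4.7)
\textbf{Proof plan for Theorem \ref{1-thm-1}.} The strategy is to adapt the Crippa--de Lellis direct construction to the It\^o setting, using the concave auxiliary function $\psi_\delta$ in place of the logarithm, and exploiting $\sigma\in C_b^2$ to absorb the It\^o correction.

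\emph{Step 1: regularization and a uniform density bound.} Mollify $b^\eps:=b\ast\rho_\eps$ by a standard mollifier. Since $[\div(b)]^-\in L^\infty$ and convolution with a probability density preserves this bound, $[\div(b^\eps)]^-\le\|[\div(b)]^-\|_\infty$ uniformly in $\eps$, while $\ol{b^\eps}/(1+|\cdot|)$ stays in $L^\infty(B_{r+1}^c)$ uniformly for small $\eps$. Combined with $\sigma\in C_b^2$, the smoothed SDE has a unique global stochastic flow $X_t^\eps$ of $C^1$-diffeomorphisms. Let $K_t^\eps:=\d[(X_t^\eps)_\#\L^d]/\d\L^d$. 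Applying It\^o's formula to $\log\det(\nabla X_t^\eps)^{-1}$ and using the uniform bounds on $[\div(b^\eps)]^-$, $\|\nabla\sigma\|_\infty$ and $\|\nabla^2\sigma\|_\infty$, one obtains $\|K_t^\eps\|_\infty\le C(T)$ uniformly in $\eps\in(0,1)$ and $t\in[0,T]$.

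\emph{Step 2: stability.} Pick $N$ large so that $\d\mu(x):=(1+|x|^2)^{-N}\d x$ is a finite measure integrating the relevant moments, and set
\[ Q^{\eps,\eps'}_\delta(t):=\E\int_{\R^d}\psi_\delta\!\bigl(|X_t^\eps(x)-X_t^{\eps'}(x)|^2\bigr)\,\d\mu(x). \]
Apply It\^o's formula to $\psi_\delta(|X_t^\eps-X_t^{\eps'}|^2)$. The drift contribution, after the split
\[ b^\eps(X^\eps)-b^{\eps'}(X^{\eps'}) = [b^\eps-b](X^\eps) + [b(X^\eps)-b(X^{\eps'})] + [b-b^{\eps'}](X^{\eps'}), \]
is dominated via $(\mathbf H_1)$ and the key identity $\psi_\delta'(\xi)\rho(\xi)\le 1$ by $2(g_R(X^\eps)+g_R(X^{\eps'}))$, plus mollification remainders that vanish as $\eps,\eps'\to 0$. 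The It\^o correction yields $\psi_\delta'(|Z|^2)|\sigma(X^\eps)-\sigma(X^{\eps'})|_{HS}^2 \le\|\nabla\sigma\|_\infty^2\,\psi_\delta'(\xi)\xi\le\|\nabla\sigma\|_\infty^2$ thanks to $\rho(s)\ge s$, while the $\psi_\delta''\le 0$ piece is discarded by concavity. Changing variables with the uniform density bound gives $\E\int g_R(X_s^\eps)\,\d\mu\le C\int g_R\,\d\mu<\infty$, and a Lyapunov estimate based on $\ol b\in L^\infty(B_r^c)$ and the boundedness of $\sigma$ controls the far field. Combining,
\[ Q^{\eps,\eps'}_\delta(t)\le \alpha(\eps,\eps')+CT,\qquad \alpha(\eps,\eps')\to 0, \]
uniformly in $\delta\in(0,1)$. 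Since $\psi_\delta(\xi)\uparrow\infty$ as $\delta\downarrow 0$ for every $\xi>0$, this forces $\{X_t^\eps\}$ to be Cauchy in probability with respect to $\P\otimes\mu$, hence also with respect to $\P\otimes\L^d$ on any ball.

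\emph{Step 3: limit, uniqueness, and the main obstacle.} The limit $X_t$ inherits absolute continuity of the push-forward with density in $L^\infty$, and standard martingale convergence shows it solves \eqref{Ito-SDE} for $\L^d$-a.e.\ $x$, yielding existence. Uniqueness comes from the same stability estimate applied to any two $\L^d$-a.e.\ flows $X,\tilde X$: without regularization errors the right-hand side is just $CT$ uniformly in $\delta$, and sending $\delta\downarrow 0$ forces $X=\tilde X$ $\L^d$-a.e. The principal obstacle is the stability step: under only $(\mathbf H_1)$ the weight $g_R$ is merely $L^1_{loc}$, forcing an $L^\infty$ density estimate (and thus $[\div(b)]^-\in L^\infty$); simultaneously, the It\^o correction $\psi_\delta'\,|\sigma(X^\eps)-\sigma(X^{\eps'})|^2$ is a priori unbounded unless $\sigma$ is uniformly Lipschitz with bounded Hessian, which is why $\sigma\in C_b^2$ is required and why the assumption $\rho(s)\ge s$ is exactly what makes both terms balance.
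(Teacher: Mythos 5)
Your overall strategy coincides with the paper's (mollify, prove a uniform density estimate from $[\div(b)]^-\in L^\infty$, $\bar b\in L^\infty(B_r^c)$ and $\sigma\in C_b^2$, run a $\psi_\delta$-stability estimate using $(\mathbf{H}_1)$, pass to the limit, and reuse the stability estimate for uniqueness), but several steps as written contain genuine gaps.

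First, the claim $\|K_t^\eps\|_\infty\le C(T)$ is false: the It\^o formula for $\log K_t^\eps$ produces a stochastic integral involving $\div(\sigma)$, so $K_t^\eps$ is (up to a bounded factor) a stochastic exponential and is not pathwise bounded. The correct and sufficient statement is the moment bound $\sup_{n}\sup_{t,x}\E[K_t^n(x)]^p<\infty$ (Lemma \ref{2-lem-3}/\ref{2-lem-4} in the paper); since you only ever use the density through expectations, this is repairable, but it must be stated correctly. Second, you cannot dominate the drift term by $g_R(X^\eps)+g_R(X^{\eps'})$ globally and then integrate $g_R$ against a weighted measure on all of $\R^d$: the hypothesis $(\mathbf{H}_1)$ only applies when $|x-y|\le R$, and $g_R$ is merely $L^1_{loc}$, so $\int_{\R^d}g_R\,\d\mu$ may be infinite for any polynomial weight. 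The argument must be localized on level sets $G_\lambda=\{\|X\|_{\infty,T}\vee\|\tilde X\|_{\infty,T}\le\lambda\}$ via stopping times, so that only $\|g_{2\lambda}\|_{L^1(B_\lambda)}$ enters, with $\L^d(B_R\cap G_\lambda^c)\lesssim\lambda^{-2}$ controlled by the second-moment estimate (your ``Lyapunov'' remark gestures at this but the localization is essential, not optional). Third, the bound $Q^{\eps,\eps'}_\delta(t)\le\alpha(\eps,\eps')+CT$ cannot hold uniformly in $\delta$: the mollification remainders carry factors $\delta^{-1}\|\sigma_n-\sigma_l\|_{L^2}^2$ and $\delta^{-1/2}\|b_n-b_l\|_{L^1}$, which blow up as $\delta\downarrow0$ for fixed $\eps,\eps'$. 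The standard fix, used in the paper, is to choose $\delta=\delta_{n,l}$ equal to (the square of) the mollification error, so the right-hand side stays bounded while $\psi_{\delta_{n,l}}(\eta^2)\to\infty$. Finally, taking expectations of $\psi_\delta(|X_t^\eps-X_t^{\eps'}|^2)$ at fixed $t$ only yields pointwise-in-time convergence; to obtain a limit in $C([0,T],\R^d)$ one needs a Burkholder estimate on the martingale part so that the stability bound controls $\psi_\delta(\|X^\eps-X^{\eps'}\|_{\infty,T}^2)$, as in Lemma \ref{2-lem-2}.
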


Recall that $\bar b(x)=\frac{b(x)}{1+|x|}$ and $B_r^c$ is the complement of the ball $B_r$. By \eqref{1-thm-1.2}, $b$ can be locally unbounded. This result has two main differences from \cite[Theorem 2.2]{Zhang13}: (1) the assumption on $\sigma$ is stronger here, but it is much easier to be checked; (2) the $W^{1,1}_{loc}$-regularity of the drift $b$ is replaced by $(\mathbf{H}_1)$.

The It\^o SDE \eqref{Ito-SDE} is closely related to the Fokker--Planck equation
  \begin{equation}\label{FPE}
  \partial_t \mu_t=L^\ast \mu_t,\quad \mu|_{t=0}=\mu_0,
  \end{equation}
where $L^\ast$ is the adjoint operator of $L$ defined as
  \begin{equation}\label{sect-4.1}
  L\varphi(x)=\frac12\sum_{i,j=1}^d a^{ij}(x)\partial_{ij}\varphi(x) +\sum_{i=1}^d b^i(x)\partial_i\varphi(x),\quad \varphi\in C_c^\infty(\R^d).
  \end{equation}
Here $a^{ij}(x)=\sum_{k=1}^m \sigma^{ik}(x)\sigma^{jk}(x),\, \partial_i\varphi(x)=\frac{\partial\varphi}{\partial x_i}(x)$ and $\partial_{ij}\varphi(x) =\frac{\partial^2\varphi}{\partial x_i\partial x_j}(x),\,1\leq i,j\leq d$. This equation is understood as follows: for any $\varphi\in C_c^\infty(\R^d)$,
  $$\frac{\d}{\d t}\int_{\R^d}\varphi(x)\,\d\mu_t(x)=\int_{\R^d} L \varphi(x)\,\d\mu_t(x),$$
where the initial condition means that $\mu_t$ weakly$\ast$ converges to $\mu_0$ as $t$ tends to 0. If $\mu_t$ is absolutely continuous with respect to the Lebesgue measure with the  density function $u_t$ for all $t\in[0,T]$, then the density function $u_t$ solves the PDE below in the weak sense:
  \begin{equation}\label{FPE-1}
  \partial_t u_t=L^\ast u_t,\quad u|_{t=0}=u_0.
  \end{equation}
The next result is a direct consequence of the It\^o formula.

\begin{proposition}\label{4-prop}
Assume the conditions of Theorem \ref{1-thm-1}. Let $(X_t)_{0\leq t\leq T}$ be the generalized stochastic flow associated to \eqref{Ito-SDE} and $K_t$ the Radon--Nikodym density of $(X_t)_\# \L^d$ with respect to $\L^d$. Then $u_t(x):=\E K_t(x)$ solves the Fokker--Planck equation \eqref{FPE-1} with $u_0=1$.
\end{proposition}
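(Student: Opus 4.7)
The plan is to test \eqref{FPE-1} against an arbitrary $\varphi \in C_c^\infty(\R^d)$ by applying It\^o's formula to $\varphi$ composed with the generalized flow $X_t(x)$, taking expectation to eliminate the martingale part, and then integrating out the initial point $x$ against $\L^d$. The push-forward identity $(X_t)_\# \L^d = K_t\, \L^d$ from Definition \ref{sect-2-def}(ii) will convert the Lebesgue integral on the left-hand side into an integral against $u_t = \E K_t$, producing exactly the weak formulation of \eqref{FPE-1}.

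More concretely, fix $\varphi \in C_c^\infty(\R^d)$. For $\L^d$-a.e. $x$, the integral equation in Definition \ref{sect-2-def}(iv) together with It\^o's formula yields
\begin{equation*}
\varphi(X_t(x)) = \varphi(x) + \int_0^t \nabla\varphi(X_s(x)) \cdot \sigma(X_s(x))\,\d B_s + \int_0^t L\varphi(X_s(x))\,\d s.
\end{equation*}
Since $\sigma \in C_b^2$ by hypothesis (i) of Theorem \ref{1-thm-1} and $\nabla\varphi$ is bounded with compact support, the integrand of the stochastic integral is bounded, so that integral is a true martingale and its expectation vanishes. Taking $\E$ on both sides, integrating against $\L^d(\d x)$, and applying Fubini together with the change-of-variables identity
\begin{equation*}
\int_{\R^d} f(X_s(x))\,\d x = \int_{\R^d} f(y)\, K_s(y)\,\d y \quad (\P\text{-a.s.}),
\end{equation*}
one gets
\begin{equation*}
\int_{\R^d} \varphi(y)\, u_t(y)\,\d y = \int_{\R^d}\varphi(x)\,\d x + \int_0^t \int_{\R^d} L\varphi(y)\, u_s(y)\,\d y\,\d s.
\end{equation*}
This is exactly the weak form of \eqref{FPE-1}, while the initial condition $u_0 \equiv 1$ follows from $X_0 = \Id$, hence $K_0 \equiv 1$.

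The main technical obstacle will be justifying the Fubini interchanges, which reduces to establishing the finiteness of $\int_0^T \E \int_{\R^d} |L\varphi(y)|\, K_s(y)\,\d y\,\d s$. Here $L\varphi = \frac12 a^{ij}\partial_{ij}\varphi + b^i \partial_i\varphi$ is supported in $\supp\varphi$, where $\sigma$ is bounded and $b\in L^1_{loc}$ by hypothesis (ii) of Theorem \ref{1-thm-1}; so one needs an $L^\infty_t L^p_{loc}(\P\times\L^d)$ bound on $K_s$ for some $p > 1$, which is produced during the proof of Theorem \ref{1-thm-1} using the lower bound $[\div(b)]^- \in L^\infty$ from \eqref{1-thm-1.2} (indeed the standard argument gives an estimate of the form $\E K_t \le e^{t\|[\div(b)]^-\|_\infty}$, which is already sufficient here). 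The weak$\ast$ convergence $u_t \to 1$ as $t\downarrow 0$ then follows by sending $t\to 0$ in the displayed weak formulation and applying dominated convergence with the same integrability bounds.
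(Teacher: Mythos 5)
Your argument is correct and is precisely what the paper has in mind: the paper offers no written proof, merely remarking that the proposition ``is a direct consequence of the It\^o formula,'' and your route --- It\^o's formula applied to $\varphi(X_t(x))$ for $\varphi\in C_c^\infty$, vanishing of the martingale part since $\sigma$ is bounded, integration in $x$ via the push-forward identity $(X_t)_\#\L^d=K_t\L^d$, and Fubini justified by $\sup_{t,x}\E K_t(x)<\infty$ together with $b\in L^1_{loc}$ on the compact support of $\varphi$ --- is exactly that standard argument. No gaps.
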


Our main purpose is to show the uniqueness of the Fokker--Planck equation \eqref{FPE-1} in a suitable space, following the ideas in \cite{RocknerZhang10, Luo14}. This approach is a probabilistic one, based on Figalli's formula (see \cite[Theorem 2.6]{Figalli}) which represents the solution to the Fokker--Planck equation \eqref{FPE} in terms of the martingale solution corresponding to the It\^o SDE \eqref{Ito-SDE}. Note that this representation formula is only valid for bounded coefficients $\sigma$ and $b$ (see \cite[p. 149]{Bogachev15} for related discussions). Nevertheless, it enables us to treat the degenerate case. We remark that there are many works dealing with various kind of non-degenerate equations
\cite{Bogachev07, Bogachev13, Bogachev15}. The recent book \cite{Bogachev15-2} presents a comprehensive study on Fokker--Planck--Kolmogorov equations.

We introduce the following condition on the diffusion coefficient:
\begin{enumerate}
\item[$(\mathbf{H}_\sigma)$] For any $R>0$, there exist a nonnegative function $\tilde g_R\in L^1_{loc}(\R^d)$ and negligible subset $N$, such that for all $x,y\notin N$ with $|x-y|\leq R$, one has
  \begin{equation}\label{Osgood-Sobolev-sigma}
  \|\sigma(x)-\sigma(y)\|^2\leq \big(\tilde g_R(x) + \tilde g_R(y)\big) \rho(|x-y|^2).
  \end{equation}
\end{enumerate}
By \eqref{Sobolev.1} and \eqref{Sobolev.2} $(q=2)$, it is clear that if $\sigma$ belongs to the Sobolev space $W^{1,2}_{loc}$, then the above condition holds with $\tilde g_R= 2C_d^2 (M_R|\nabla\sigma|)^2$ and $\rho(s)=s$. Here is the last main result of this work which slightly generalizes \cite[Theorem 1.1]{RocknerZhang10}.

\begin{theorem}[Uniqueness of Fokker--Planck equations]\label{4-thm}
Assume that the coefficients $\sigma$ and $b$ are essentially bounded. Moreover, the hypotheses $(\mathbf{H}_\sigma)$ and $(\mathbf{H}_1)$ hold for $\sigma$ and $b$ respectively. Then for any given probability density function $f\in \mathcal B_b(\R^d)$, there is at most one weak solution $u_t$ to the Fokker--Planck equation \eqref{FPE-1} in the class $L^\infty\big([0,T], L^1\cap L^\infty(\R^d)\big)$ with $u_0=f$.
\end{theorem}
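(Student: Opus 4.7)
The plan is to follow the probabilistic strategy of \cite{RocknerZhang10, Luo14}: use Figalli's superposition principle to lift any two weak solutions of \eqref{FPE-1} to probability measures on the path space $C([0,T], \R^d)$, and then establish their coincidence via an Osgood--Sobolev variant of the Crippa--De~Lellis pathwise estimate, crucially exploiting the $L^\infty$ bound on the time marginals.

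Let $u^1, u^2 \in L^\infty([0,T], L^1 \cap L^\infty(\R^d))$ be two weak solutions with common initial density $f$. Because $\sigma$ and $b$ are essentially bounded, \cite[Theorem 2.6]{Figalli} yields probability measures $\eta^1, \eta^2$ on $C([0,T], \R^d)$, each concentrated on paths which solve \eqref{Ito-SDE} in the martingale sense, with $(e_t)_\# \eta^i = u^i_t \L^d$ for every $t \in [0,T]$. Disintegrating $\eta^i = \int_{\R^d} \eta^{i,x} f(x)\,\d x$ against $e_0$, it suffices to prove $\eta^{1,x} = \eta^{2,x}$ for $f\L^d$-a.e. $x$; note that the time-marginals of $\eta^{i,x}$ remain absolutely continuous with densities pointwise bounded by $\|u^i\|_\infty / f(x)$.

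The key estimate is performed after smooth approximation. Mollify $\sigma, b$ to $\sigma_n, b_n$; by the maximal-function inequalities of Example \ref{1-exa-1}, both $(\mathbf{H}_\sigma)$ and $(\mathbf{H}_1)$ are inherited by $\sigma_n, b_n$ with the same $\rho$ and uniform local $L^1$-bounds on the majorants $\tilde g_R^n, g_R^n$. The smoothed SDE has a unique strong flow, and via a Yamada--Watanabe-type construction (permissible because $\sigma_n$ is smooth and bounded) one can realize approximations $X^{1,n}_\cdot(x), X^{2,n}_\cdot(x)$ of $\eta^{1,x}, \eta^{2,x}$ on a common filtered probability space with a shared Brownian motion and common starting point $x$. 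Apply It\^o's formula to $\psi_\delta(|X^{1,n}_t(x) - X^{2,n}_t(x)|^2)$: using $\psi_\delta'(\xi) \leq 1/\rho(\xi)$ and $(\mathbf{H}_1)$, the drift contribution is pointwise bounded by $2(g_R^n(X^{1,n}) + g_R^n(X^{2,n}))$; using $\psi_\delta'' \leq 0$ and $(\mathbf{H}_\sigma)$, the It\^o correction is bounded by $C(\tilde g_R^n(X^{1,n}) + \tilde g_R^n(X^{2,n}))$, both independently of $\delta$. Taking expectation, integrating against $f(x)\,\d x$, and bounding the pushed-forward maximal functions by $\|u^i\|_\infty \|g_R^n\|_{L^1(B_R)}$ and $\|u^i\|_\infty \|\tilde g_R^n\|_{L^1(B_R)}$ via the marginal bound, a Gronwall argument yields, uniformly in $n$ and $\delta$,
\[
\E \int_{\R^d} \psi_\delta\bigl(|X^{1,n}_t(x) - X^{2,n}_t(x)|^2\bigr) f(x)\,\d x \leq C_T.
\]
Passing $n \to \infty$ and then $\delta \to 0$, so that $\psi_\delta(\xi) \uparrow \infty$ for every $\xi > 0$, Fatou's lemma forces the two limit processes to agree pathwise, whence $\eta^{1,x} = \eta^{2,x}$.

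The main obstacle is the simultaneous execution of the coupling and the regularization: one needs to couple two a priori distinct martingale solutions on a common driving Brownian motion without invoking strong uniqueness for the limiting SDE (which is not available under $(\mathbf{H}_\sigma)$ alone), and to guarantee that the Osgood--Sobolev estimate survives the limit $n \to \infty$. The $L^\infty$ bound on the marginals from Step~1 is what converts the pathwise bounds by maximal functions into integrated inequalities that remain finite in the regularization limit.
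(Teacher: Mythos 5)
Your overall strategy (Figalli's superposition, then an Osgood-type estimate on the difference of two coupled path-space representatives, using the $L^\infty$ bound on the marginals to absorb the maximal-function majorants) is the right one and is the one the paper follows. But the way you construct the coupling contains a genuine gap, and you essentially flag it yourself without closing it. You propose to realize the two measures $\eta^{1,x},\eta^{2,x}$ as limits of solutions $X^{1,n},X^{2,n}$ of the \emph{mollified} SDE on a common space with a shared Brownian motion and the same starting point. Since $\sigma_n,b_n$ are smooth, that SDE is strongly well-posed, so $X^{1,n}=X^{2,n}$ identically; two identical processes cannot approximate two a priori distinct martingale solutions, and there is no argument that the mollified flows converge to the given $\eta^{i,x}$ at all (that would require precisely the well-posedness of the limiting SDE that is unavailable here). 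Moreover, your Gronwall step needs $\int\E\,\tilde g^n_R(X^{i,n}_s(x))f(x)\,\d x\leq\|u^i\|_\infty\|\tilde g^n_R\|_{L^1}$, but the time marginals of the \emph{mollified} flow are not $u^i_t$; a uniform-in-$n$ density estimate for them would require hypotheses on $\mathrm{div}(b)$ and $\nabla\sigma$ that Theorem \ref{4-thm} does not assume. A secondary error: the disintegrated conditional marginals $(e_t)_\#\eta^{i,x}$ need not be absolutely continuous (they are Dirac masses when the flow is deterministic), so the claimed pointwise bound $\|u^i\|_\infty/f(x)$ is false; only the averaged bound survives.

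The paper avoids all of this by never approximating the processes. It converts the two martingale solutions into weak solutions (Proposition \ref{sect-5-prop-2}) and then invokes the Ikeda--Watanabe coupling (Proposition \ref{sect-5-prop-1}) to place both, unmodified, on one filtered space with one Brownian motion and a.s.\ equal initial points. It\^o's formula is applied to $\psi_\delta(|Y^{(1)}-Y^{(2)}|^2)$ with the \emph{true} coefficients; mollification enters only inside the expectation, to upgrade the a.e.\ inequalities $(\mathbf{H}_\sigma)$ and $(\mathbf{H}_1)$ to ones valid along the processes, and the replacement errors $\|\sigma_n-\sigma\|^2$, $|b_n-b|$ vanish as $n\to\infty$ precisely because the laws of $Y^{(i)}_s$ are the fixed bounded densities $u^{(i)}_s$. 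If you restructure your argument so that the coupling is done on the exact solutions and the regularization lives only in the estimate, the rest of your outline (the $\delta$-uniform bound on $\E\psi_\delta$, then $\delta\downarrow0$ and the stopping times $\tau_\lambda\uparrow T$) goes through as you describe.
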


\section{Proof of Theorem \ref{1-thm-0}}

First we prove an a-priori moment estimate on the solution flow $X_t$ to \eqref{Ito-SDE}. This improves the result presented in \cite[Lemma 2.4]{Luo15b}.

\begin{lemma}[Moment estimate]\label{sect-2-lem-1}
Assume that $q>1$ and $\sigma,\, b\in L^{2q}(\mu)$. Let $X_t$ be a $\mu$-a.e. stochastic flow associated to It\^o SDE \eqref{Ito-SDE}, and $K_t$ the Radon--Nikodym density with respect to $\mu$. If for some $q_1\in [1,q)$, one has
  \begin{equation}\label{sect-2-lem-1.0}
  \Lambda_{q,q_1,T}:=\sup_{0\leq t\leq T}\|K_t\|_{L^{q/(q-q_1)}(\P\times\mu)}<+\infty,
  \end{equation}
then we have
  $$\int_{\R^d}\E\sup_{0\leq t\leq T}|X_t(x)|^{2q_1} \,\d\mu(x) \leq C_{d,q_1}+ C_{q_1,T} \Lambda_{q,q_1,T}\big(\|\sigma\|_{L^{2q}(\mu)}^{2q_1} +\|b\|_{L^{2q}(\mu)}^{2q_1}\big).$$
\end{lemma}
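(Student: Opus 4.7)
The plan is to start from the integral form $X_t=x+M_t(x)+D_t(x)$ with $M_t(x)=\int_0^t\sigma(X_s(x))\,\d B_s$ and $D_t(x)=\int_0^t b(X_s(x))\,\d s$, bound $\E\sup_{t\leq T}|X_t(x)|^{2q_1}$ pointwise in $x$ via Burkholder--Davis--Gundy together with a Hölder-in-time inequality, then integrate in $x$ against $\mu$ and convert the resulting integrals via the push-forward identity $(X_s(\omega,\cdot))_\#\mu=K_s(\omega,\cdot)\,\mu$ from Definition \ref{sect-2-def}(ii), finishing with a single Hölder estimate on the product space $(\Omega\times\R^d,\P\times\mu)$ that couples the coefficient norms to $\Lambda_{q,q_1,T}$.

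More concretely, the elementary inequality $(a+b+c)^{2q_1}\leq 3^{2q_1-1}(a^{2q_1}+b^{2q_1}+c^{2q_1})$ separates the three contributions. Burkholder--Davis--Gundy yields $\E\sup_{t\leq T}|M_t(x)|^{2q_1}\leq C_{q_1}\E\bigl(\int_0^T|\sigma(X_s(x))|^2\,\d s\bigr)^{q_1}$, and because $q_1\geq 1$ a Hölder-in-time step upgrades this to $C_{q_1}T^{q_1-1}\int_0^T\E|\sigma(X_s(x))|^{2q_1}\,\d s$; similarly $\sup_{t\leq T}|D_t(x)|^{2q_1}\leq T^{2q_1-1}\int_0^T|b(X_s(x))|^{2q_1}\,\d s$. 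Integrating these bounds against $\mu$, applying Fubini, and invoking Definition \ref{sect-2-def}(ii) gives, for the diffusion term,
  $$\int_{\R^d}\E\int_0^T|\sigma(X_s(x))|^{2q_1}\,\d s\,\d\mu(x)=\int_0^T\E\int_{\R^d}|\sigma(y)|^{2q_1}K_s(y)\,\d\mu(y)\,\d s,$$
and an analogous identity holds for $b$.

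The last step is one Hölder inequality on $(\Omega\times\R^d,\P\times\mu)$ with conjugate exponents $q/q_1$ and $q/(q-q_1)$. Since $|\sigma(y)|^{2q_1}$ is $\omega$-independent, its $L^{q/q_1}(\P\times\mu)$-norm equals $\|\sigma\|_{L^{2q}(\mu)}^{2q_1}$, so the inner integral above is bounded uniformly in $s\in[0,T]$ by $\|\sigma\|_{L^{2q}(\mu)}^{2q_1}\,\|K_s\|_{L^{q/(q-q_1)}(\P\times\mu)}\leq\Lambda_{q,q_1,T}\|\sigma\|_{L^{2q}(\mu)}^{2q_1}$; integrating in $s$ costs only an extra factor of $T$, absorbed into $C_{q_1,T}$. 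The same argument controls the drift contribution by $\Lambda_{q,q_1,T}\|b\|_{L^{2q}(\mu)}^{2q_1}$. For the deterministic initial-condition term, the weight $(1+|x|^2)^{q_1-q-(d+1)/2}$ attached to $|x|^{2q_1}\,\d\mu$ is integrable whenever $q_1<q+\tfrac12$, which follows from $q_1<q$; this yields $\int_{\R^d}|x|^{2q_1}\,\d\mu\leq C_{d,q_1}$. The step that most deserves care is the pairing of exponents in the final Hölder inequality: matching $\|K_s\|_{L^{q/(q-q_1)}(\P\times\mu)}$ with the $L^{2q}(\mu)$-norms of $\sigma$ and $b$ is exactly what dictates the hypothesis $q_1<q$ and is the main bookkeeping concern; BDG, Hölder in time, and the push-forward identity are otherwise routine.
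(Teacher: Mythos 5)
Your argument is correct, and it reaches the paper's estimate by a genuinely different decomposition. The paper applies It\^o's formula to $|X_t|^2$, which produces the cross terms $\langle X_s,\sigma(X_s)\,\d B_s\rangle$ and $\langle X_s,b(X_s)\rangle\,\d s$; after BDG these carry a factor $\sup_s|X_s|^{q_1}$ that must be reabsorbed into the left-hand side by Cauchy's inequality, and this absorption is only legitimate if $\E\sup_{s\le T}|X_s|^{2q_1}$ is a priori finite --- hence the paper's stopping times $\tau_R$ and the concluding Fatou argument as $R\to\infty$. You instead work directly with the integral identity $X_t=x+M_t+D_t$, apply BDG to $M$ and H\"older in time to both $M$ and $D$, and never see a $\sup|X|$ on the right-hand side; consequently no localization and no absorption are needed (all inequalities hold in $[0,\infty]$ and the final bound is finite, which is enough). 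The remaining steps --- Fubini, the push-forward identity $(X_s)_\#\mu=K_s\mu$, and the single H\"older pairing of $L^{q/q_1}$ against $L^{q/(q-q_1)}$ on $\P\times\mu$, together with the integrability of $|x|^{2q_1}$ against $\mu$ for $q_1<q+\tfrac12$ --- coincide with the paper's. What your route buys is brevity and the elimination of the stopping-time machinery; what the paper's route would buy in other contexts is that It\^o on $|X_t|^2$ only requires control of $\langle x,b(x)\rangle$ rather than of $|b(x)|$ itself, an advantage that is moot here since $b\in L^{2q}(\mu)$ is assumed outright. Both give the stated constant structure $C_{d,q_1}+C_{q_1,T}\Lambda_{q,q_1,T}\big(\|\sigma\|_{L^{2q}(\mu)}^{2q_1}+\|b\|_{L^{2q}(\mu)}^{2q_1}\big)$.
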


\begin{proof}
For any $R>0$, define the stopping time $\tau_R(x)=\inf\{t>0: |X_t(x)|\geq R\}$. To simplify notations, we shall omit the space variable $x$ in $X_s(x)$ and $\tau_R(x)$. The It\^o formula yields
  $$|X_{t\wedge \tau_R}|^2=|x|^2 +2\int_0^{t\wedge \tau_R}\<X_s, \sigma(X_s)\,\d B_s\> +2\int_0^{t\wedge \tau_R} \<X_s, b(X_s)\>\,\d s + \int_0^{t\wedge \tau_R} \|\sigma(X_s)\|^2\,\d s.$$
Hence there is $C_{q_1}>0$ such that
  \begin{equation}\label{sect-2-lem-1.1}
  \begin{split}
  |X_{t\wedge \tau_R}|^{2q_1}&\leq C_{q_1}\bigg[|x|^{2q_1} +\bigg|\int_0^{t\wedge \tau_R}\<X_s, \sigma(X_s)\,\d B_s\>\bigg|^{q_1} +\bigg|\int_0^{t\wedge \tau_R} \<X_s, b(X_s)\>\,\d s\bigg|^{q_1}\\
  &\hskip38pt + \bigg(\int_0^{t\wedge \tau_R} \|\sigma(X_s)\|^2\,\d s\bigg)^{q_1}\bigg].
  \end{split}
  \end{equation}

By Burkholder's inequality,
  \begin{align*}
  \E\sup_{t\leq T}\bigg|\int_0^{t\wedge \tau_R} \<X_s, \sigma(X_s)\,\d B_s\>\bigg|^{q_1}
  &\leq C'_{q_1}\,\E\bigg[\bigg(\int_0^{T\wedge \tau_R}|\sigma(X_s)^\ast X_s|^2\,\d s\bigg)^{q_1/2}\bigg]\\
  &\leq C'_{q_1} \,\E\bigg[\bigg(\sup_{s\leq T\wedge\tau_R} |X_s|^{q_1}\bigg) \bigg(\int_0^{T\wedge \tau_R} \|\sigma(X_s)\|^2\,\d s \bigg)^{q_1/2}\bigg].
  \end{align*}
Cauchy's inequality leads to
  \begin{equation}\label{sect-2-lem-1.2}
  \begin{split}
  \E\sup_{t\leq T}\bigg|\int_0^{t\wedge \tau_R} \<X_s, \sigma(X_s)\,\d B_s\>\bigg|^{q_1}
  & \leq \frac1{3C_{q_1}}\, \E\sup_{s\leq T\wedge\tau_R}|X_s|^{2q_1} + C'_{q_1,T}\,\E \int_0^{T\wedge \tau_R}\|\sigma(X_s)\|^{2q_1}\,\d s.
  \end{split}
  \end{equation}
Next,
  \begin{align*}
  \E\sup_{t\leq T} \bigg|\int_0^{t\wedge \tau_R} \<X_s, b(X_s)\>\,\d s\bigg|^{q_1}
  &\leq \E \bigg(\int_0^{T\wedge \tau_R} |\<X_s, b(X_s)\>|\,\d s\bigg)^{q_1}\\
  &\leq \E\bigg[\bigg(\sup_{s\leq T\wedge\tau_R} |X_s|^{q_1}\bigg)\bigg(\int_0^{T\wedge \tau_R} |b(X_s)| \,\d s\bigg)^{q_1} \bigg].
  \end{align*}
Again by Cauchy's inequality,
  \begin{equation}\label{sect-2-lem-1.3}
  \begin{split}
  \E\sup_{t\leq T} \bigg|\int_0^{t\wedge \tau_R} \<X_s, b(X_s)\>\,\d s\bigg|^{q_1}
  &\leq \frac1{3C_{q_1}}\, \E \sup_{s\leq T\wedge\tau_R}|X_s|^{2q_1} + C''_{q_1} \E\bigg(\int_0^{T\wedge \tau_R} |b(X_s)| \,\d s\bigg)^{2q_1}\\
  &\leq \frac1{3C_{q_1}}\, \E \sup_{s\leq T\wedge\tau_R}|X_s|^{2q_1} + C''_{q_1,T} \E\int_0^{T\wedge \tau_R} |b(X_s)|^{2q_1} \,\d s.
  \end{split}
  \end{equation}
Finally,
  \begin{equation}\label{sect-2-lem-1.4}
  \E\sup_{t\leq T} \bigg(\int_0^{t\wedge \tau_R} \|\sigma(X_s)\|^2\,\d s\bigg)^{q_1} \leq \tilde C_{q_1,T} \E\int_0^{T\wedge \tau_R} \|\sigma(X_s)\|^{2q_1} \,\d s.
  \end{equation}

Note that $\sup_{t\leq T}|X_{t\wedge \tau_R}|^{2q_1} =\sup_{t\leq T\wedge\tau_R}|X_t|^{2q_1}$. Combining \eqref{sect-2-lem-1.1}--\eqref{sect-2-lem-1.4}, we obtain
  $$\E\sup_{t\leq T\wedge\tau_R}|X_t|^{2q_1}\leq 3C_{q_1}|x|^{2q_1} + C_{q_1,T} \E\int_0^{T} \ch_{\{\tau_R >s\}} \big(\|\sigma(X_s)\|^{2q_1} + |b(X_s)|^{2q_1}\big) \,\d s.$$
Integrating both sides on $\R^d$ with respect to $\mu$ yields
  \begin{align*}
  \int_{\R^d}\E\sup_{t\leq T\wedge\tau_R}|X_t|^{2q_1} \,\d\mu
  &\leq C_{d,q_1}+ C_{q_1,T} \int_0^T \E \int_{\R^d} \ch_{\{\tau_R >s\}} \big(\|\sigma(X_s)\|^{2q_1} + |b(X_s)|^{2q_1}\big) \,\d\mu\d s\\
  &\leq C_{d,q_1}+ C_{q_1,T} \int_0^T \E \int_{\R^d} \big(\|\sigma(y)\|^{2q_1} + |b(y)|^{2q_1}\big)K_s(y) \,\d \mu(y)\d s,
  \end{align*}
where $C_{d,q_1}=3C_{q_1}\int_{\R^d} |x|^{2q_1}\,\d\mu(x)<+\infty$. Using H\"older's inequality and \eqref{sect-2-lem-1.0}, we get
  \begin{align*}
  \int_{\R^d}\E\sup_{t\leq T\wedge\tau_R}|X_t|^{2q_1} \,\d\mu
  &\leq C_{d,q_1}+ C_{q_1,T} \big(\|\sigma\|_{L^{2q}(\mu)}^{2q_1} +\|b\|_{L^{2q}(\mu)}^{2q_1}\big) T\sup_{s\leq T} \|K_s\|_{L^{q/(q-q_1)}(\P\times \mu)} \\
  &\leq C_{d,q_1}+ C_{q_1,T} T\Lambda_{q,q_1,T}\big(\|\sigma\|_{L^{2q}(\mu)}^{2q_1} +\|b\|_{L^{2q}(\mu)}^{2q_1}\big).
  \end{align*}
Since the right hand side is independent of $R>0$, Fatou's lemma allows us to let $R\to\infty$ to get the desired estimate.
\end{proof}

Next we provide a stability estimate which will play an important role in the proof of Theorem \ref{1-thm-0}. Recall the definition of the auxiliary function $\psi_\delta$ for $\delta>0$ in Section 2. We denote by $\|\cdot \|_{\infty, T}$ the uniform norm in $C([0,T],\R^d)$.

\begin{lemma}[Stability estimate]\label{sect-2-lem-2}
Assume that $\sigma,\tilde\sigma\in L^{2q}_{loc}(\R^d, \mathcal{M}_{d\times m})$ and $b,\tilde b\in L^q_{loc}(\R^d, \R^d)$. Moreover, $\sigma$ and $b$ satisfy the condition {\rm (i)} in Theorem \ref{1-thm-0}. Let $X_t$ (resp. $\tilde X_t$) be the stochastic flow associated to the It\^o SDE \eqref{Ito-SDE} with coefficients $\sigma$ and $b$ (resp. $\tilde\sigma$ and $\tilde b$). Denote by $K_t$ (resp. $\tilde K_t$) the Radon--Nikodym density of $X_t$ (resp. $\tilde X_t$) with respect to $\mu$. Assume that
  \begin{equation}\label{sect-2-lem-2.1}
  \Lambda_{q,T}:=\sup_{0\leq t\leq T}\big[\| K_t\|_{L^{q/(q-1)}(\P\times\mu)} \vee \| \tilde K_t\|_{L^{q/(q-1)}(\P\times\mu)}\big]<+\infty.
  \end{equation}
Then for any $\delta>0$,
  \begin{align*}
  &\hskip14pt \E\int_{G_R}\psi_\delta\big(\|X-\tilde X\|^2_{\infty,T}\big)\,\d \mu  \\
  & \leq C_{d,R,T}\big(\Lambda_{q,T}\wedge\sqrt{\Lambda_{q,T}} \,\big) \bigg[\frac1{\delta}\|\sigma- \tilde\sigma \|_{L^{2q}(B_R)}^2 +\frac1{\sqrt\delta}\Big(\|\sigma-\tilde\sigma\|_{L^{2q}(B_R)}+ \|b-\tilde b\|_{L^q(B_R)} \Big)\\
  &\hskip126pt +\|\nabla\sigma\|_{L^{2q}(B_{3R})}^2 +\|\nabla\sigma\|_{L^{2q}(B_{3R})} +\|g_{2R}\|_{L^q(B_R)}\bigg],
  \end{align*}
where $G_R(\omega):=\big\{x\in \R^d:\|X_\cdot(\omega,x)\|_{\infty,T} \vee\|\tilde X_\cdot(\omega,x)\|_{\infty,T} \leq R\big\}$, and $L^q(B_R)$ is the usual function space with respect to the Lebesgue measure on the ball $B_R=\{x\in\R^d: |x|\leq R\}$.
\end{lemma}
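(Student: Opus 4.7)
The plan is to apply It\^{o}'s formula to $Y_t := \psi_\delta(|X_t - \tilde X_t|^2)$, use the concavity of $\psi_\delta$ to drop the nonpositive $\psi''_\delta$-trace contribution to the finite-variation part, and systematically exploit the four elementary bounds
\[
\psi'_\delta(r)\,\rho(r) \leq 1,\qquad \psi'_\delta(r)\,r \leq 1,\qquad \psi'_\delta(r)\,\sqrt{r} \leq \tfrac{1}{2\sqrt\delta},\qquad \psi'_\delta(r) \leq \tfrac{1}{\delta},
\]
where the first is immediate from \eqref{auxi-funct}, the second uses the standing assumption $\rho(s)\geq s$, and the third follows from $\rho(r)+\delta \geq r+\delta \geq 2\sqrt{\delta r}$. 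These identities are the algebraic engine that turns coefficient differences into integrable quantities.

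Next, I would split each coefficient difference as $\sigma(X)-\tilde\sigma(\tilde X) = [\sigma(X)-\sigma(\tilde X)] + [\sigma(\tilde X)-\tilde\sigma(\tilde X)]$, and analogously for $b$. On $G_R$ one has $|X-\tilde X|\leq 2R$, so the Osgood--Sobolev condition $(\mathbf{H}_q)$ applied to the first $b$-piece yields $|\langle X-\tilde X, b(X)-b(\tilde X)\rangle| \leq (g_{2R}(X)+g_{2R}(\tilde X))\,\rho(|X-\tilde X|^2)$, and the $\rho$-factor cancels with $2\psi'_\delta$ to leave just $2(g_{2R}(X)+g_{2R}(\tilde X))$. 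The Sobolev maximal-function estimate \eqref{Sobolev.1} converts $\|\sigma(X)-\sigma(\tilde X)\|^2$ into $\leq C(M_{2R}|\nabla\sigma|(X)+M_{2R}|\nabla\sigma|(\tilde X))^2|X-\tilde X|^2$, and $\psi'_\delta$ absorbs the $|X-\tilde X|^2$ factor. The ``comparison'' pieces $\sigma(\tilde X)-\tilde\sigma(\tilde X)$ and $b(\tilde X)-\tilde b(\tilde X)$ are treated crudely by $\psi'_\delta\leq 1/\delta$ in the It\^{o} correction and by $\psi'_\delta|X-\tilde X|\leq 1/(2\sqrt\delta)$ in the drift, producing precisely the $1/\delta$ and $1/\sqrt\delta$ factors that appear in the claim.

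With these pointwise bounds in hand, I would take $\E\int_{G_R}\sup_{t\leq T}(\cdot)\,\d\mu$. The It\^{o} martingale term is handled by Burkholder's inequality; the same algebraic identities also pull a factor of $\psi'_\delta$ out of $[M]_T$, so the $\delta^{-1/2}$ and $(M_{2R}|\nabla\sigma|)^2$ shapes emerge in the BDG estimate for free. To move the $\sqrt{\cdot}$ outside both integrals I would invoke Cauchy--Schwarz, using $G_R\subset B_R$ and $\mu(B_R)<+\infty$. Evaluations of the form $\int_{G_R} f(X_s(x))\,\d\mu(x)$ are pushed forward to $B_R$ via $(X_s)_\#\mu = K_s\mu$ and $(\tilde X_s)_\#\mu = \tilde K_s\mu$; H\"{o}lder's inequality with conjugate exponents $(q,q/(q-1))$ then separates $\|f\|_{L^q(B_R,\mu)}$ from $\|K_s\|_{L^{q/(q-1)}(\P\times\mu)}$, yielding a $\Lambda_{q,T}$ factor for the direct finite-variation contributions and a $\sqrt{\Lambda_{q,T}}$ factor after BDG. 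Finally, estimate \eqref{Sobolev.2} upgrades $\|M_{2R}|\nabla\sigma|\|_{L^{2q}(B_R)}$ to $\|\nabla\sigma\|_{L^{2q}(B_{3R})}$; the radius $3R=R+2R$ arises from combining the outer integration radius $R$ with the $2R$-scale of the maximal function.

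The main delicacy, and the only genuine obstacle, is the bookkeeping: the $\rho$-cancellation from $(\mathbf{H}_q)$ must be spent exactly in the term $2\psi'_\delta\langle X-\tilde X, b(X)-b(\tilde X)\rangle$ (to produce $\|g_{2R}\|_{L^q(B_R)}$), the Sobolev maximal-function bound must be used only on the ``regularity'' pieces $\sigma(X)-\sigma(\tilde X)$ (to produce $\|\nabla\sigma\|_{L^{2q}(B_{3R})}$ and its square), and the crude bounds $\psi'_\delta\leq 1/\delta$ and $\psi'_\delta|X-\tilde X|\leq 1/(2\sqrt\delta)$ must be routed to the ``comparison'' pieces. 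Once each slot in the displayed inequality is matched to the correct algebraic bound on $\psi'_\delta$, the remaining estimates are standard H\"{o}lder-type manipulations.
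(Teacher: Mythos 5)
Your proposal is correct and follows essentially the same route as the paper: Itô's formula applied twice with concavity of $\psi_\delta$ discarding the second-order term, the same splitting of each coefficient difference into a regularity piece (handled by $(\mathbf{H}_q)$ resp. the maximal-function estimate \eqref{Sobolev.1}--\eqref{Sobolev.2}) and a comparison piece (handled by the crude $1/\delta$ and $1/\sqrt\delta$ bounds on $\psi'_\delta$), followed by Burkholder, Cauchy--Schwarz over $B_R$, push-forward via $K_s,\tilde K_s$, and H\"older with exponents $(q,q/(q-1))$. The only presentational difference is that the paper localizes via the stopping time $\tau_R$ and the inclusion $G_R\subset\{\tau_R>T\}$ rather than arguing directly on $G_R$, which is an equivalent bookkeeping device.
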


\begin{proof}
We follow the line of arguments of \cite[Theorem 5.2]{FangLuoThalmaier}. Denote by $Z_t=X_t-\tilde X_t$ and $\xi_t=|Z_t|^2$ where we omit the space variable $x$. It\^o's formula leads to
  $$\d\xi_t=2\<Z_t,(\sigma(X_t)- \tilde\sigma(\tilde X_t))\,\d B_t\>+2\<Z_t,b(X_t)-\tilde b(\tilde X_t)\>\,\d t+\|\sigma(X_t)- \tilde\sigma(\tilde X_t)\|^2\,\d t.$$
Applying again the It\^o formula and by \eqref{auxi-funct}, we obtain
  \begin{equation}\label{sect-2-lem-2.2}
  \begin{split}
  \d\psi_\delta(\xi_t) &\leq 2\frac{\<Z_t,(\sigma(X_t)- \tilde\sigma(\tilde X_t))\,\d B_t\>}{\rho(\xi_t)+\delta} +2\frac{\<Z_t,b(X_t)-\tilde b(\tilde X_t)\>}{\rho(\xi_t)+\delta}\,\d t + \frac{\|\sigma(X_t)- \tilde\sigma(\tilde X_t) \|^2} {\rho(\xi_t)+\delta}\,\d t \\
  &=: \d I_1(t)+ \d I_2(t)+ \d I_3(t).
  \end{split}
  \end{equation}

We first estimate the term $I_1(t)$. Define the stopping time $\tau_R(x)=\inf\{t>0: |X_t(x)|\vee |\tilde X_t(x)|> R\}$. Note that a.s. $G_R\subset B_R$. Thus
  \begin{equation}\label{sect-2-lem-2.3}
  \E\int_{G_R}\sup_{0\leq t\leq T}|I_1(t)|\,\d\mu \leq \E\int_{B_R} \sup_{0\leq t\leq T\wedge\tau_R} |I_1(t)|\,\d\mu.
  \end{equation}
By Burkholder's inequality, we have
  \begin{align*}
  \E\sup_{0\leq t\leq T\wedge\tau_R} |I_1(t)|^2&\leq 4\, \E\int_0^{T\wedge\tau_R} \frac{\big|(\sigma(X_s)-\tilde\sigma(\tilde X_s))^\ast Z_s\big|^2}{(\rho(\xi_s)+\delta)^2}\,\d s\\
  &\leq 4\, \E\int_0^{T\wedge\tau_R} \frac{\|\sigma(X_s)- \tilde\sigma(\tilde X_s)\|^2}{\xi_s+\delta}\,\d s
  \end{align*}
since $\rho(\xi)\geq \xi\geq 0$. Cauchy's inequality leads to
  \begin{align*}
  \int_{B_R}\E \sup_{0\leq t\leq T\wedge\tau_R} |I_1(t)| \,\d \mu
  &\leq \int_{B_R} 2\bigg[\E\int_0^{T\wedge\tau_R} \frac{\|\sigma(X_s)- \tilde\sigma(\tilde X_s)\|^2} {\xi_s+\delta}\,\d s\bigg]^{1/2}\,\d\mu\\
  &\leq C_{d,R} \bigg[\int_{B_R} \E\int_0^{T\wedge\tau_R} \frac{\|\sigma(X_s)- \tilde\sigma(\tilde X_s)\|^2}{\xi_s+\delta}\,\d s \d \mu\bigg]^{1/2}.
  \end{align*}
where $C_{d,R}=2(\mu(B_R))^{1/2}$. It is clear that
  \begin{equation}\label{sect-2-lem-2.4}
  \begin{split}
  \int_{B_R}\E \sup_{0\leq t\leq T\wedge\tau_R} |I_1(t)| \,\d\mu
  &\leq C_{d,R} \bigg[\int_{B_R} \E\int_0^{T\wedge\tau_R} \frac{\|\sigma(X_s)- \sigma(\tilde X_s)\|^2}{\xi_s+\delta}\,\d s \d \mu\bigg]^{1/2}\\
  &\quad + C_{d,R} \bigg[\int_{B_R} \E\int_0^{T\wedge\tau_R} \frac{\|\sigma(\tilde X_s)- \tilde\sigma(\tilde X_s)\|^2}{\xi_s+\delta}\,\d s \d \mu\bigg]^{1/2}\\
  &=: I_{1,1} + I_{1,2}.
  \end{split}
  \end{equation}
We have by H\"older's inequality and \eqref{sect-2-lem-2.1} that
  \begin{equation}\label{sect-2-lem-2.5}
  \begin{split}
  I_{1,2}&\leq \frac{C_{d,R}}{\sqrt\delta} \bigg[\int_0^{T} \E\int_{B_R} \ch_{\{\tau_R> s\}} \|\sigma(\tilde X_s)- \tilde\sigma(\tilde X_s) \|^2\,\d \mu\d s \bigg]^{1/2}\\
  &\leq \frac{C_{d,R}}{\sqrt\delta} \bigg[\int_0^{T} \E\int_{B_R} \|\sigma(y)- \tilde \sigma(y)\|^2 \tilde K_s(y)\,\d \mu(y)\d s\bigg]^{1/2}\\
  &\leq \frac{C_{d,R}}{\sqrt\delta} \sqrt{T \Lambda_{q,T}}\, \|\sigma - \tilde \sigma\|_{L^{2q}(B_R)},
  \end{split}
  \end{equation}
where in the last step we used the fact that the measure $\mu$ is smaller than the Lebesgue measure. Next, by \eqref{Sobolev.1},
  \begin{align*}
  I_{1,1} &= C_{d,R} \bigg[\int_0^{T} \E \int_{B_R} \ch_{\{\tau_R > s\}}\frac{\|\sigma(X_s)- \sigma(\tilde X_s)\|^2}{\xi_s+\delta}\,\d \mu\d s \bigg]^{1/2}\\
  &\leq C'_{d,R} \bigg[\int_0^{T} \E \int_{B_R} \ch_{\{\tau_R > s\}} \big[M_{2R}|\nabla\sigma|(X_s)+ M_{2R}|\nabla\sigma|(\tilde X_s) \big]^2 \, \d \mu\d s\bigg]^{1/2}\\
  &\leq \sqrt{2}\, C'_{d,R} \bigg[\int_0^{T} \E \int_{B_R} \big[M_{2R}|\nabla\sigma|(y)\big]^2 \big(K_s(y)+ \tilde K_s(y)\big) \, \d \mu(y)\d s\bigg]^{1/2}.
  \end{align*}
H\"older's inequality leads to
  \begin{align*}
  I_{1,1} &\leq  \sqrt{2}\,  C'_{d,R} \bigg[\int_0^{T} 2\Lambda_{q,T} \bigg(\int_{B_R} \big[M_{2R}|\nabla\sigma|(y)\big]^{2q} \, \d \mu(y)\bigg)^{1/q} \d s \bigg]^{1/2}\\
  &\leq C''_{d,R} \sqrt{T \Lambda_{q,T}} \|\nabla\sigma\|_{L^{2q}(B_{3R})},
  \end{align*}
where in the second step we have used the maximal inequality \eqref{Sobolev.2}. Combining this inequality with \eqref{sect-2-lem-2.3}--\eqref{sect-2-lem-2.5}, we arrive at
  \begin{equation}\label{sect-2-lem-2.6}
  \E \int_{G_R} \sup_{0\leq t\leq T} |I_1(t)| \,\d\mu \leq \bar C_{d,R,T} \sqrt{\Lambda_{q,T}}\bigg( \frac1{\sqrt\delta} \|\sigma - \tilde \sigma\|_{L^{2q}(B_R)} + \|\nabla\sigma\|_{L^{2q}(B_{3R})}\bigg).
  \end{equation}

Next, we treat the second term $I_2(t)$. We have
  \begin{align*}
  \E\int_{G_R}\sup_{0\leq t\leq T}|I_2(t)|\,\d\mu
  & \leq \E\int_{B_R}\sup_{0\leq t\leq T\wedge\tau_R}|I_2(t)|\,\d\mu\\
  &\leq 2\, \E\int_{B_R}\int_0^{T\wedge\tau_R} \frac{|\<Z_s,b(X_s)-\tilde b(\tilde X_s)\>|}{\rho(\xi_s)+\delta}\,\d s\d\mu\\
  &=2 \int_0^T \E\int_{B_R}{\bf 1}_{\{\tau_R>s\}} \frac{|\<Z_s,b(X_s)-\tilde b(\tilde X_s)\>|}{\rho(\xi_s)+\delta}\,\d\mu\d s.
  \end{align*}
Note that $|\<Z_s,b(X_s)-\tilde b(\tilde X_s)\>|\leq |\<Z_s,b(X_s)-b(\tilde X_s)\>|+|\<Z_s,b(\tilde X_s)-\tilde b(\tilde X_s)\>|$. We deduce from $(\mathbf{H}_q)$ that
  \begin{align*}
  \E\int_{B_R}{\bf 1}_{\{\tau_R>s\}} \frac{|\<Z_s,b(X_s)-b(\tilde X_s)\>|}{\rho(\xi_s)+\delta}\,\d \mu &\leq \E\int_{B_R}{\bf 1}_{\{\tau_R>s\}} \big[g_{2R}(X_s)+g_{2R}(\tilde X_s)\big]\,\d \mu\\
  &\leq \E\int_{B_R} g_{2R}(y) \big[K_s(y)+\tilde K_s(y)\big]\,\d \mu,
  \end{align*}
which, combined with \eqref{sect-2-lem-2.1}, gives us
  $$ \int_0^T \E\int_{B_R}{\bf 1}_{\{\tau_R>s\}} \frac{|\<Z_s,b(X_s)-b(\tilde X_s)\>|}{\rho(\xi_s)+\delta}\,\d \mu\d s \leq 2T\Lambda_{q,T} \|g_{2R}\|_{L^q(B_R)}.$$
Moreover, since $\rho(s)\geq s$ for all $s\geq 0$,
  \begin{align*}
  \int_0^T \E\int_{B_R}{\bf 1}_{\{\tau_R>s\}} \frac{|\<Z_s,b(\tilde X_s)-\tilde b(\tilde X_s)\>|}{\rho(\xi_s) +\delta}\,\d \mu\d s&\leq \int_0^T \E\int_{B_R}{\bf 1}_{\{\tau_R>s\}} \frac{|b(\tilde X_s)-\tilde b(\tilde X_s)|}{\sqrt{ \xi_s+\delta}}\,\d \mu\d s\\
  &\leq \frac1{\sqrt{\delta}} \int_0^T \E\int_{B_R} |b(y)-\tilde b(y)| \tilde K_s(y)\,\d \mu(y)\d s\\
  &\leq \frac{T \Lambda_{q,T}}{\sqrt{\delta}}\|b-\tilde b\|_{L^q(B_R)}.
  \end{align*}
Consequently,
  \begin{equation}\label{sect-2-lem-2.7}
  \begin{split}
  \E\int_{G_R}\sup_{0\leq t\leq T}|I_2(t)|\,\d\mu &\leq 2T \Lambda_{q,T} \bigg(2\|g_{2R}\|_{L^q(B_R)} + \frac1{\sqrt{\delta}}\|b-\tilde b\|_{L^q(B_R)}\bigg).
  \end{split}
  \end{equation}

Finally, similar to the treatment of the terms on the right hand side of \eqref{sect-2-lem-2.4}, we have
  \begin{align*}
  \E\int_{G_R}\sup_{0\leq t\leq T}|I_3(t)|\,\d \mu &\leq \E\int_{B_R} \int_0^{T\wedge\tau_R} \frac{\|\sigma(X_s)- \tilde\sigma(\tilde X_s)\|^2}{\rho(\xi_s)+\delta}\,\d s\d \mu\\
  &\leq \bar C_{d,R,T} \Lambda_{q,T} \bigg( \frac1{\delta} \|\sigma - \tilde \sigma\|_{L^{2q}(B_R)}^2 + \|\nabla\sigma\|_{L^{2q}(B_{3R})}^2 \bigg).
  \end{align*}
Combining this estimate with \eqref{sect-2-lem-2.6} and \eqref{sect-2-lem-2.7}, we obtain the desired result.
\end{proof}

The uniqueness part of Theorem \ref{1-thm-0} follows directly from Lemma \ref{sect-2-lem-2}.

\begin{corollary}[Uniqueness]\label{sect-2-cor}
Under the conditions of Theorem \ref{1-thm-0}, there exists at most one $\mu$-a.e. stochastic flow associated to the It\^o SDE \eqref{Ito-SDE}.
\end{corollary}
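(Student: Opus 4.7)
The plan is to deduce the corollary directly from the stability estimate in Lemma~\ref{sect-2-lem-2}, specialized to identical coefficients. Let $X_t$ and $\tilde X_t$ be two $\mu$-a.e.\ stochastic flows associated to~\eqref{Ito-SDE}, in the class for which the Radon--Nikodym densities belong to $L^\infty([0,T], L^p(\P\times\mu))$ for every $p>1$, as produced by Theorem~\ref{1-thm-0}; in particular the quantity $\Lambda_{q,T}$ appearing in~\eqref{sect-2-lem-2.1} is finite for both flows simultaneously. Taking $\tilde\sigma=\sigma$ and $\tilde b=b$ in Lemma~\ref{sect-2-lem-2}, the two coefficient-difference terms $\|\sigma-\tilde\sigma\|_{L^{2q}(B_R)}$ and $\|b-\tilde b\|_{L^q(B_R)}$ vanish, leaving the bound
\[
  \E\int_{G_R}\psi_\delta\big(\|X-\tilde X\|^2_{\infty,T}\big)\,\d\mu \;\leq\; C(d,R,T,\Lambda_{q,T},\sigma,g_{2R}),
\]
whose right-hand side is independent of $\delta>0$. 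Its finiteness follows from $|\nabla\sigma|\in L^{2q}_{loc}(\R^d)$ (implied by the exponential integrability~\eqref{1-thm-0.1} together with the fact that the density of $\mu$ is bounded above and below on bounded sets) and from $g_{2R}\in L^q_{loc}(\R^d)$ via $(\mathbf{H}_q)$.

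Next I would let $\delta\downarrow 0$. Since $\psi_\delta(\xi)=\int_0^\xi \d s/(\rho(s)+\delta)$ is monotone non-decreasing as $\delta$ decreases and, by the Osgood condition $\int_{0+}\d s/\rho(s)=\infty$, satisfies $\psi_\delta(\xi)\uparrow+\infty$ for every $\xi>0$, the monotone convergence theorem applied to the left-hand side together with the uniform bound above forces
\[
  (\P\times\mu)\Big(\big\{(\omega,x):\ x\in G_R(\omega),\ \|X(\omega,x)-\tilde X(\omega,x)\|_{\infty,T}>0\big\}\Big)=0.
\]

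Finally I would let $R\to+\infty$. By Lemma~\ref{sect-2-lem-1} applied to both flows with some $q_1\in(1,q)$, we have $\int_{\R^d}\E\sup_{0\leq t\leq T}|X_t|^{2q_1}\,\d\mu<+\infty$ and the analogous bound for $\tilde X$; Chebyshev's inequality then gives $(\P\times\mu)(G_R^c)\to 0$ as $R\to\infty$. Consequently $X_t(\omega,x)=\tilde X_t(\omega,x)$ simultaneously for all $t\in[0,T]$ and $(\P\times\mu)$-almost every $(\omega,x)$, which is the required uniqueness.

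The main obstacle is rather slight: most of the substantive work has already been carried out in Lemma~\ref{sect-2-lem-2}, and this corollary is essentially a packaging of that estimate through the Osgood device $\psi_\delta\uparrow+\infty$. The two points that require genuine attention are (a) checking that the right-hand side of the stability bound remains $\delta$-uniform and finite once $\tilde\sigma=\sigma$ and $\tilde b=b$, which reduces to the local integrability statements $\nabla\sigma\in L^{2q}_{loc}$ and $g_{2R}\in L^q_{loc}$ noted above; and (b) ensuring that both candidate flows have densities in the right integrability class so that Lemma~\ref{sect-2-lem-2} is applicable, which is implicit in the framework of Theorem~\ref{1-thm-0}.
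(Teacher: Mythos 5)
Your proof is correct and follows essentially the same route as the paper: apply Lemma \ref{sect-2-lem-2} with $\tilde\sigma=\sigma$, $\tilde b=b$ to get a $\delta$-uniform bound on $\E\int_{G_R}\psi_\delta(\|X-\tilde X\|_{\infty,T}^2)\,\d\mu$, exploit $\psi_\delta\uparrow\infty$ as $\delta\downarrow 0$, and remove the localization via the moment estimate of Lemma \ref{sect-2-lem-1}. The only cosmetic difference is that you pass to the limit $\delta\downarrow 0$ by monotone convergence, whereas the paper uses a Chebyshev-type argument with an auxiliary threshold $\eta$ and divides by $\psi_\delta(\eta^2)$; both are valid.
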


\begin{proof}
Assume there are two stochastic flows $(X_t)_{0\leq t\leq T}$ and $(\tilde X_t)_{0\leq t\leq T}$ associated to the It\^o SDE \eqref{Ito-SDE}. For any $R>0$, applying Lemma \ref{sect-2-lem-2} with $\tilde\sigma=\sigma$ and $\tilde b=b$ leads to
  \begin{equation}\label{sect-2-cor-1}
  \E\int_{G_R}\psi_\delta\big(\|X-\tilde X\|^2_{\infty,T}\big)\,\d \mu \leq \tilde C_{d,R,T} <+\infty,
  \end{equation}
where $\tilde C_{d,R,T}$ depends on $\Lambda_{q,T}$ and $\|\nabla\sigma\|_{L^{2q}(B_{3R})},\, \|g_{2R}\|_{L^q(B_R)}$. For any $\eta\in (0,1)$, let $\Theta_\eta(\omega):=\{x\in \R^d: \|X_\cdot(\omega,x)- \tilde X_\cdot(\omega,x)\|_{\infty,T}\leq \eta\}$. Then by Lemma \ref{sect-2-lem-1} with $q_1=1$ and \eqref{sect-2-cor-1},
  \begin{align*}
  \E\int_{\R^d}\big(1\wedge \|X-\tilde X\|_{\infty,T}^2 \big)\,\d\mu &\leq \E\big[\mu(G_R^c)\big] + \E\int_{G_R} \big(1\wedge \|X-\tilde X\|_{\infty,T}^2 \big)\,\d\mu\\
  &\leq \frac{C_{d,T}}{R^2} +\eta^2 \mu(\R^d) +\E\int_{G_R \cap \Theta_\eta^c}\big(1\wedge \|X-\tilde X\|_{\infty,T}^2 \big)\,\d \mu\\
  &\leq \frac{C_{d,T}}{R^2} +\eta^2 \mu(\R^d) + \frac{\tilde C_{d,R,T}}{\psi_\delta(\eta^2)} .
  \end{align*}
We first let $\delta\downarrow 0$, then let $R \uparrow \infty$ and $\eta\downarrow 0$ to get
  $$\E\int_{\R^d}\big(1\wedge \|X-\tilde X\|_{\infty,T}^2 \big)\,\d \mu =0,$$
which yields the uniqueness of stochastic flows associated to \eqref{Ito-SDE}.
\end{proof}

To prove the existence part of Theorem \ref{1-thm-0}, we shall regularize the coefficients $\sigma$ and $b$ as usual. Let $\chi\in C_c^\infty(\R^d,\R_+)$ be such that $\int_{\R^d}\chi\,\d x=1$ and its support $\supp(\chi)\subset B_1$. For $n\geq 1$, define $\chi_n(x)=n^d\chi(nx)$ for all $x\in\R^d$. Next choose $\phi\in C_c^\infty(\R^d,[0,1])$ which satisfies $\phi|_{B_1} \equiv 1$ and $\supp(\phi)\subset B_2$. Set $\phi_n(x)=\phi(x/n)$ for all $x\in\R^d$ and $n\geq1$. Now we define
  \begin{equation}\label{2-convolution}
  \sigma_n=(\sigma\ast\chi_n)\,\phi_n \quad \mbox{and} \quad b_n=(b\ast\chi_n)\,\phi_n.
  \end{equation}
Then for every $n\geq1$, the functions $\sigma_n$ and $b_n$ are smooth with compact supports. Consider the following It\^o SDE:
  \begin{equation}\label{2-smooth-SDE}
  \d X^n_t=\sigma_n(X^n_t)\,\d B_t +b_n(X^n_t)\,\d t,\quad X^n_0=x.
  \end{equation}
This equation has a unique strong solution which gives rise to a stochastic flow of diffeomorphisms on $\R^d$. Denote by $K^n_t$ the Radon--Nikodym density of $(X^n_t)_\#\mu$ with respect to $\mu$. The following uniform estimate on the density functions was established in \cite[Lemma 2.6]{Luo15b}.

\begin{lemma}[Density estimate]\label{sect-2-lem-4}
For any $p>1$, there are two positive constants $C_{1,p},C_{2,p}>0$ such that
  \begin{equation}\label{sect-2-lem-4.1}
  \begin{split}
  &\hskip14pt \sup_{n\geq 1} \sup_{0\leq t\leq T}\|K^n_t\|_{L^p(\P\times\mu)}\\
  &\leq C_{1,p}\bigg(\int_{\R^d}\exp\big[C_{2,p}T\big([\div(b)]^- +|\bar b| +|\nabla\sigma|^2 +|\bar\sigma|^2\big)\big] \d \mu \bigg)^{\frac1{p(p+1)}}<\infty.
  \end{split}
  \end{equation}
\end{lemma}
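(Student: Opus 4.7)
The plan is to exploit the fact that, since $\sigma_n$ and $b_n$ are smooth with compact support, \eqref{2-smooth-SDE} generates a stochastic flow of $C^1$-diffeomorphisms $X^n_t$ with strictly positive Jacobian $J^n_t(x):=\det\nabla X^n_t(x)$. Writing $\d\mu=\rho_0\,\d x$ with $\rho_0(x)=(1+|x|^2)^{-q-(d+1)/2}$, the pointwise change of variables yields $\omega$-almost surely
\[
K^n_t(y)=\frac{\rho_0(Y^n_t(y))}{\rho_0(y)\,J^n_t(Y^n_t(y))},\qquad Y^n_t=(X^n_t)^{-1},
\]
and the substitution $y=X^n_t(x)$ transforms the $L^p$-integral into
\[
\E\int_{\R^d}(K^n_t)^p\,\d\mu = \int_{\R^d}\rho_0(x)\,\E\!\left[\left(\frac{\rho_0(x)}{\rho_0(X^n_t(x))}\right)^{\!p-1}\!\frac{1}{J^n_t(x)^{p-1}}\right]\d x.
\]
Everything thus reduces to estimating an exponential functional of the flow started at $x$.

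The next step is to apply stochastic calculus twice. A Stratonovich--It\^o conversion for the flow Jacobian gives
\[
\log J^n_t(x)=\int_0^t\div(b_n)(X^n_s)\,\d s+\sum_{k=1}^m\int_0^t\div(\sigma_n^k)(X^n_s)\,\d B^k_s+\int_0^t c_n(X^n_s)\,\d s,
\]
with $c_n$ a continuous function pointwise bounded by $C_d|\nabla\sigma_n|^2$. Applying It\^o's formula to $\log(1+|X^n_t|^2)$ similarly writes $\rho_0(x)/\rho_0(X^n_t(x))$ as $\exp(\tilde M^n_t+A^n_t)$, where both $\langle\tilde M^n\rangle_t$ and $|A^n_t|$ are pathwise dominated by an integral of $|\bar\sigma_n|^2+|\bar b_n|$ along the flow. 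Collecting the two expressions, the expectation in the integrand becomes $\E\!\left[\exp(\mathcal M^n_t+\mathcal A^n_t)\right]$, where $\mathcal M^n$ is a continuous local martingale and $\mathcal A^n_t$ is of bounded variation, pointwise controlled by
\[
C_{p,T}\int_0^t \bigl([\div(b_n)]^-+|\bar b_n|+|\nabla\sigma_n|^2+|\bar\sigma_n|^2\bigr)(X^n_s)\,\d s.
\]

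Two iterated Cauchy--Schwarz splittings then finish the argument: the first peels off the Dol\'eans exponential $\exp(\mathcal M^n_t-\tfrac12\langle\mathcal M^n\rangle_t)$, whose expectation is $\leq 1$, and the second absorbs $\tfrac12\langle\mathcal M^n\rangle_t$ into $\mathcal A^n_t$. Pushing the exponential inside the time integral by Jensen, swapping order of integration by Fubini, and invoking $(X^n_s)_\#\mu=K^n_s\mu$ to return to integration against $\mu$ produces a self-referencing estimate for $\|K^n_t\|_{L^p(\P\times\mu)}$ whose resolution gives the exponent $1/[p(p+1)]$ in \eqref{sect-2-lem-4.1}. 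The main obstacle will be the uniform-in-$n$ control of the correction $c_n$ and of the quadratic variations: because both the mollifier $\chi_n$ and the localising factor $\phi_n$ act on $\nabla\sigma_n$, one must verify the pointwise bound $|\nabla\sigma_n|\leq|\nabla\sigma|\ast\chi_n$ on $\{\phi_n\equiv 1\}$ and then appeal to Jensen's inequality to transfer exponential moments from $|\nabla\sigma|^2$ to $|\nabla\sigma_n|^2$, which is exactly what hypothesis \eqref{1-thm-0.1} supplies.
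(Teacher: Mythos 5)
Your outline is correct and follows the same route as the source the paper relies on: the paper does not prove this lemma but simply quotes it from [Luo15b, Lemma 2.6], whose argument is exactly your change-of-variables to the forward flow, the It\^o/Stratonovich expression for $\log J^n_t$ and for $\rho_0(x)/\rho_0(X^n_t(x))$, the two Cauchy--Schwarz splittings peeling off the Dol\'eans exponential, and the Gronwall-type resolution of the self-referencing inequality yielding the exponent $1/[p(p+1)]$. The only point to make explicit in a full write-up is the contribution of $\nabla\phi_n$ (not just the region $\{\phi_n\equiv 1\}$), which produces the terms $|\bar\sigma|\ast\chi_n$ and $|\bar b|\ast\chi_n$ and is precisely why $|\bar\sigma|^2$ and $|\bar b|$ appear inside the exponential in \eqref{sect-2-lem-4.1}.
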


Let $q_1\in (1,q)$ and $\Lambda_{q_1,T}$ be the quantity on the right hand side of \eqref{sect-2-lem-4.1} with $p=q/(q-q_1)$, which is finite by \eqref{1-thm-0.1}. Then
  \begin{equation}\label{sect-2.1}
  \sup_{n\geq1}\sup_{0\leq t\leq T}\|K^n_t\|_{L^{q/(q-q_1)}(\P\times\mu)} \leq \Lambda_{q_1,T}<+\infty.
  \end{equation}
Thanks to the moment estimate in Lemma \ref{sect-2-lem-1}, we can improve \cite[Proposition 2.7]{Luo15b} by showing that the sequence of stochastic flows $(X^n_t)_{n\geq 1}$ generated by \eqref{2-smooth-SDE} are convergent in $L^2\big(\Omega\times\R^d, C([0,T],\R^d)\big)$.

\begin{proposition}\label{sect-2-prop-1}
Assume the conditions of Theorem \ref{1-thm-0}. There exists a random field $X:\Omega\times\R^d\to C([0,T],\R^d)$ such that
  $$\lim_{n\to\infty} \E\int_{\R^d} \|X^n -X\|_{\infty,T}^{2} \,\d\mu=0.$$
\end{proposition}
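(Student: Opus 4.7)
The plan is to show that $(X^n)_{n\geq 1}$ is Cauchy in $L^2(\Omega \times \R^d; C([0,T],\R^d))$ under $\P \times \mu$. I would first establish Cauchy convergence in measure via the stability estimate (Lemma \ref{sect-2-lem-2}), and then upgrade to $L^2$ convergence using uniform integrability from the moment estimate (Lemma \ref{sect-2-lem-1}).

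To implement the first stage, I would apply Lemma \ref{sect-2-lem-2} to $(X^n, X^m)$ with $(\sigma,b)=(\sigma_n,b_n)$ and $(\tilde\sigma,\tilde b)=(\sigma_m,b_m)$; smoothness of $\sigma_n$ makes condition (i) of Theorem \ref{1-thm-0} trivial. The key point is that every quantity on the right-hand side of the estimate admits a bound uniform in $n,m$: (a) the density bound $\Lambda_{q,T}^{n,m}$ is handled by Lemma \ref{sect-2-lem-4} and \eqref{sect-2.1}; (b) on $B_{3R}$, we have $\sigma_n = \sigma\ast\chi_n$ for $n$ large, hence $|\nabla\sigma_n| \leq M|\nabla\sigma|$ with $\|M|\nabla\sigma|\|_{L^{2q}(B_{3R})}$ finite since $\nabla\sigma \in L^{2q}_{loc}$ by \eqref{1-thm-0.1}; (c) convolving the Osgood--Sobolev inequality \eqref{Osgood-Sobolev.1} against $\chi_n$ shows that $b_n$ satisfies $(\mathbf{H}_q)$ with $g_R^n = g_R \ast \chi_n$, whose $L^q(B_R)$ norm is dominated by $\|g_R\|_{L^q(B_{R+1})}$.

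With these uniform bounds, for any $\delta > 0$ and $n, m$ large enough the stability estimate yields
\[ \E\int_{G_R^{n,m}} \psi_\delta\bigl(\|X^n - X^m\|_{\infty,T}^2\bigr)\,d\mu \leq \frac{A_R}{\delta}\|\sigma_n - \sigma_m\|_{L^{2q}(B_R)}^2 + \frac{A_R}{\sqrt\delta}\bigl(\|\sigma_n - \sigma_m\|_{L^{2q}(B_R)} + \|b_n - b_m\|_{L^q(B_R)}\bigr) + B_R, \]
with $A_R, B_R$ independent of $n, m, \delta$. Then, by Chebyshev together with the moment bound of Lemma \ref{sect-2-lem-1} applied to $\mu((G_R^{n,m})^c)$,
\[ \E\bigl[\mu(\{x : \|X^n - X^m\|_{\infty,T} > \eta\})\bigr] \leq \frac{C_T}{R^{2q_1}} + \frac{1}{\psi_\delta(\eta^2)}\E\int_{G_R^{n,m}} \psi_\delta\bigl(\|X^n - X^m\|_{\infty,T}^2\bigr)\,d\mu. \]
Following the pattern of Corollary \ref{sect-2-cor}, I would let $n,m\to\infty$ first (the first two terms of the stability bound vanish since $\sigma_n \to \sigma$ in $L^{2q}_{loc}$ and $b_n \to b$ in $L^q_{loc}$, which hold by Remark \ref{1-remark-0}), then $\delta \downarrow 0$ to kill $B_R/\psi_\delta(\eta^2)$, and finally $R \uparrow \infty$ and $\eta \downarrow 0$, obtaining Cauchy in measure.

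The upgrade to $L^2$ is routine: Lemma \ref{sect-2-lem-1} combined with the density bound \eqref{sect-2.1} yields $\sup_n \E\int \|X^n\|_{\infty,T}^{2q_1}\,d\mu < \infty$ for some $q_1 > 1$, making the family $\{\|X^n - X^m\|_{\infty,T}^2\}_{n,m}$ uniformly integrable in $L^1(\P\times\mu)$, and Cauchy in measure therefore implies Cauchy in $L^2$. The main obstacle is the careful ordering of the four successive limits ($n,m$, then $\delta$, then $R$, then $\eta$), together with the verification that mollification preserves $(\mathbf{H}_q)$ with a uniform constant; this last point is precisely what makes the residual term $B_R$ in the stability estimate independent of $n,m$ and thus amenable to being dominated via $1/\psi_\delta(\eta^2) \to 0$.
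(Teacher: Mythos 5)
Your proposal is correct and follows essentially the same route as the paper: uniform density and moment bounds for the mollified flows (via Lemma \ref{sect-2-lem-4} and Lemma \ref{sect-2-lem-1}), the stability estimate of Lemma \ref{sect-2-lem-2} with mollification-stable constants ($g^n_{2R}=g_{2R}\ast\chi_n$ and a uniform bound on $\|\nabla\sigma_n\|_{L^{2q}(B_{3R})}$), Chebyshev on the level sets $G_R^n\cap G_R^m$, and the upgrade from convergence in measure to $L^2(\P\times\mu)$ via the uniform $2q_1$-moment with $q_1>1$. The only cosmetic difference is the handling of $\delta$: you keep $\delta$ fixed, send $n,m\to\infty$ first and then $\delta\downarrow 0$, whereas the paper couples $\delta=\delta_{n,l}=\big(\|\sigma_n-\sigma_l\|_{L^{2q}(B_R)}+\|b_n-b_l\|_{L^q(B_R)}\big)^2$ so that the right-hand side stays uniformly bounded while $\psi_{\delta_{n,l}}(\eta^2)\to\infty$; both orderings are valid.
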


\begin{proof}
The proof follows the line of \cite[Proposition 2.7]{Luo15b}. Applying Lemma \ref{sect-2-lem-1}, we have
  \begin{equation}\label{sect-2-prop-1.0}
  \E\int_{\R^d} \|X^n_\cdot(x)\|_{\infty,T}^{2q_1} \,\d\mu(x) \leq C_{d,q_1}+ C_{q_1,T} \Lambda_{q_1,T}\big(\|\sigma_n\|_{L^{2q}(\mu)}^{2q_1} +\|b_n\|_{L^{2q}(\mu)}^{2q_1}\big),
  \end{equation}
where $C_{d,q_1}$ and $C_{q_1,T}$  are positive constants independent on $n$. We have $|\sigma_n|\leq |\sigma|\ast\chi_n$. By Jensen's inequality,
  $$\|\sigma_n\|_{L^{2q}(\mu)}^{2q}\leq  \int_{\R^d}\big(|\sigma|^{2q} \ast\chi_n\big)(x)\,\d\mu(x) =\int_{\R^d}\chi_n(y) \,\d y\int_{\R^d} \frac{|\sigma(x-y)|^{2q}}{(1+|x|^2)^{q+(d+1)/2}} \,\d x.$$
For $n\geq 2$ and $|y|\leq 1/n$, one has $|x-y|^2\leq 2|x|^2+ 2|y|^2\leq 2|x|^2 +1/2$, hence
  $$1+|x|^2\geq \frac12 |x-y|^2+\frac34\geq \frac12(1+|x-y|^2).$$
Consequently, for $n\geq 2$,
  \begin{equation*}
  \|\sigma_n\|_{L^{2q}(\mu)}^{2q} \leq 2^{q+(d+1)/2} \int_{\R^d}\chi_n(y) \,\d y \int_{\R^d}\frac{|\sigma(x-y)|^{2q}}{(1+|x-y|^2)^{q+(d+1)/2}}\,\d x
  =2^{q+(d+1)/2} \|\sigma\|_{L^{2q}(\mu)}^{2q}.
  \end{equation*}
The same estimate holds for $\|b_n\|_{L^{2q}(\mu)}$. Combining these discussions with \eqref{sect-2-prop-1.0}, we obtain
  \begin{equation}\label{sect-2-prop-1.0.0}
  \sup_{n\geq 2}\E\int_{\R^d} \|X^n_\cdot(x)\|_{\infty,T}^{2q_1} \,\d\mu(x) \leq \hat C<+\infty,
  \end{equation}
where $\hat C$ depends on $d,\, q_1,\, \|\sigma\|_{L^{2q}(\mu)}$ and $\|b\|_{L^{2q}(\mu)}$.

For any $x\in B_R$, we deduce from the definition that $b_n(x)=(b\ast \chi_n)(x)$ for all $n>R$. By \eqref{Osgood-Sobolev.1}, it is clear that one has
  $$|\<x-y,b_n(x)-b_n(y)\>|\leq \big[g^n_{2R}(x) + g^n_{2R}(y)\big] \rho(|x-y|^2)\quad \mbox{for all } x,y\in B_R,$$
where $g^n_{2R}= g_{2R}\ast \chi_n$. Note that $q_1\in (1,q)$, H\"older's inequality yields
  \begin{align}\label{sect-2-prop-1.2}
  \sup_{n\geq1}\sup_{0\leq t\leq T}\|K^n_t\|_{L^{q/(q-1)}(\P\times\mu)}\leq \mu(\R^d)^{(q_1-1)/q}\Lambda_{q_1,T},
  \end{align}
where $\Lambda_{q_1,T}$ is the quantity on the right hand side of \eqref{sect-2-lem-4.1} with $p=q/(q-q_1)$. For any $n\geq1$, we denote by $G_R^{n}$ the level set of the flow $X^n_t$ on the interval $[0,T]$:
  $$G_R^n(\omega)=\{x\in \R^d:\|X^n_\cdot(\omega,x)\|_{\infty,T}\leq R\}.$$
Applying Lemma \ref{sect-2-lem-2} to the flows $X^n_t$ and $X^l_t$ gives us
  \begin{equation}\label{sect-2-prop-1.4}
  \begin{split}
  &\hskip14pt \E\int_{G_R^n\cap G_R^l} \psi_\delta\big(\|X^n- X^l\|^2_{\infty,T}\big)\d\mu\\
  &\leq C_{d,R,T}\Lambda_{q_1,T} \bigg[\frac1{\delta}\|\sigma_n- \sigma_l \|_{L^{2q}(B_R)}^2 +\frac1{\sqrt\delta}\Big(\|\sigma_n- \sigma_l\|_{L^{2q}(B_R)}+ \|b_n- b_l\|_{L^q(B_R)} \Big)\\
  &\hskip75pt +\|\nabla\sigma_n\|_{L^{2q}(B_{3R})}^2 +\|\nabla\sigma_n\|_{L^{2q}(B_{3R})} +\|g^n_{2R}\|_{L^q(B_R)}\bigg].
  \end{split}
  \end{equation}
By the definition of $\sigma_n$, we have
  $$|\nabla \sigma_n|\leq |\nabla \sigma|\ast\chi_n +C\frac{|\sigma\ast\chi_n|}{1+|x|} \leq |\nabla \sigma|\ast\chi_n+ 2C|\bar \sigma|\ast \chi_n.$$
From this we can show that
  \begin{equation}\label{sect-2-prop-1.4.5}
  \|\nabla \sigma_n\|_{L^{2q}(B_{3R})}\leq C_q \big(\|\nabla \sigma\|_{L^{2q}(B_{3R+1})}  +\|\bar \sigma\|_{L^{2q}(B_{3R+1})}\big).
  \end{equation}
Moreover, $\|g^n_{2R}\|_{L^q(B_R)}\leq \|g_{2R}\|_{L^q(B_{R+1})}$. Hence for any $n\geq 1$,
  $$ \|\nabla\sigma_n\|_{L^{2q}(B_{3R})}^2 +\|\nabla\sigma_n\|_{L^{2q}(B_{3R})} +\|g^n_{2R}\|_{L^q(B_R)}  \leq C'_{d,q,R}<+\infty.$$
Now we define
  $$\delta_{n,l}=\big(\|\sigma_n-\sigma_l\|_{L^{2q}(B_{R})}+\|b_n- b_l\|_{L^{q}(B_{R})}\big)^2$$
which tends to $0$ as $n,l\to+\infty$. Taking $\delta=\delta_{n,l}$
in \eqref{sect-2-prop-1.4}, we obtain that for any $n,l\geq1$,
  \begin{equation}\label{sect-2-prop-1.5}
  \E\int_{G_R^n\cap G_R^l} \psi_{\delta_{n,l}} \big(\|X^n- X^l\|^2_{\infty,T}\big)\d\mu \leq C_{T,d,q,q_1,R}<+\infty.
  \end{equation}

The moment estimate \eqref{sect-2-prop-1.0.0}  implies
  \begin{equation*}
  \E\int_{(G_R^n\cap G_R^l)^c}\big(1\wedge\|X^n-X^l\|_{\infty,T}^2 \big)\,\d\mu  \leq \E\big\{\mu\big[(G_R^n\cap G_R^l)^c\big]\big\}  \leq \frac{4\hat C}{R^{2q_1}}.
  \end{equation*}
For $\eta\in (0,1)$, set
  $$\Sigma^{n,l}_\eta (\omega)=\big\{x\in \R^n:\|X^n(\omega, x)-X^l(\omega, x)\|_{\infty,T}\leq \eta\big\}.$$
We have
  \begin{equation*}
  \begin{split}
  \E\int_{G_R^n\cap G_R^l}\! \big(1\wedge\|X^n-X^l\|_{\infty,T}^2 \big)\,\d\mu
  &\leq \eta\,\mu(\R^d) +\E\int_{(G_R^n\cap G_R^l)\setminus \Sigma^{k,l}_\eta}\big(1\wedge\|X^n-X^l\|_{\infty,T}^2 \big)\,\d \mu\\
  &\leq \eta\,\mu(\R^d) +\frac1{\psi_{\delta_{n,l}} (\eta^2)} \E\int_{G_R^n\cap G_R^l}\! \psi_{\delta_{n,l}} \big(\|X^n- X^l\|^2_{\infty,T}\big)\,\d\mu\\
  &\leq \eta\,\mu(\R^d) +\frac{C_{T,d,q,q_1,R}}{\psi_{\delta_{n,l}} (\eta^2)},
  \end{split}
  \end{equation*}
where the last inequality follows from \eqref{sect-2-prop-1.5}. Combining the above two estimates, we obtain
  $$\E\int_{\R^d} \big(1\wedge\|X^n-X^l\|_{\infty,T}^2 \big)\,\d\mu\leq \frac{4\hat C}{R^{2q_1}} + \eta\,\mu(\R^d) +\frac{C_{T,d,q,q_1,R}}{\psi_{\delta_{n,l}} (\eta^2)}.$$
Recall that $\delta_{n,l}$ tends to 0 as $n,l\to\infty$, hence, first letting $n,l\to \infty$, and then $R\to+\infty$, $\eta\to0$, we arrive at
  $$\lim_{n,l\to+\infty}\E\int_{\R^d} \big(1\wedge\|X^n-X^l\|_{\infty,T}^2 \big)\,\d\mu=0.$$
Taking into account \eqref{sect-2-prop-1.0.0}, we deduce that
  $$\lim_{n,l\to+\infty}\E\int_{\R^d} \|X^n-X^l\|_{\infty,T}^2 \,\d\mu=0.$$
This immediately implies the desired result.
\end{proof}

At this stage, we can use \eqref{sect-2-prop-1.2} and follow the arguments of \cite[Theorem 3.4]{FangLuoThalmaier} to show that

\begin{proposition}\label{sect-2-prop-2}
Let $X:\Omega\times\R^d\to C([0,T],\R^d)$ be the random field obtained in Proposition \ref{sect-2-prop-1}. For all $t\in[0,T]$, there exists $K_t:\Omega\times\R^d\to\R_+$ such that $(X_t)_\#\mu=K_t \mu$. Moreover, $\sup_{0\leq t\leq T} \|K_t\|_{L^{q/(q-1)}(\P\times\mu)}\leq \mu(\R^d)^{(q_1-1)/q} \Lambda_{q_1, T}$.
\end{proposition}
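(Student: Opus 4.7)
The plan is to promote the $L^2$-convergence provided by Proposition \ref{sect-2-prop-1}, together with the uniform density bound \eqref{sect-2-prop-1.2}, into a statement about the push-forward measure $(X_t)_\#\mu$ by extracting a suitable weak limit of $K^{n}_t$.

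Fix $t\in[0,T]$. By the uniform estimate \eqref{sect-2-prop-1.2}, the family $\{K^n_t\}_{n\geq 1}$ is bounded in $L^{q/(q-1)}(\P\times\mu)$, which is reflexive since $q/(q-1)>1$. I would therefore extract a subsequence $K^{n_k}_t$ converging weakly in this space to some nonnegative $K_t\in L^{q/(q-1)}(\P\times\mu)$. Weak lower semicontinuity of the norm immediately yields
$$\|K_t\|_{L^{q/(q-1)}(\P\times\mu)}\leq \liminf_{k\to\infty}\|K^{n_k}_t\|_{L^{q/(q-1)}(\P\times\mu)}\leq \mu(\R^d)^{(q_1-1)/q}\Lambda_{q_1,T},$$
which is exactly the claimed a-priori bound (valid for every $t$, hence after passing to the supremum over $t\in[0,T]$).

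To identify $K_t$ as the density of $(X_t)_\#\mu$ with respect to $\mu$, pick any $\varphi\in C_b(\R^d)$ and any bounded random variable $F\colon\Omega\to\R$. Since $X^{n_k}_t$ is almost surely a diffeomorphism with density $K^{n_k}_t$, one has
$$\E\Big[F\int_{\R^d}\varphi(X^{n_k}_t(x))\,\d\mu(x)\Big]=\E\Big[F\int_{\R^d}\varphi(y)K^{n_k}_t(y)\,\d\mu(y)\Big].$$
The left-hand side converges to $\E[F\int_{\R^d}\varphi(X_t)\,\d\mu]$: Proposition \ref{sect-2-prop-1} gives $X^{n_k}_t\to X_t$ in $(\P\times\mu)$-measure, and bounded convergence (using that both $\varphi$ and $F$ are bounded) takes care of the expectation. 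The right-hand side converges to $\E[F\int_{\R^d}\varphi K_t\,\d\mu]$ by the weak convergence of $K^{n_k}_t$, since the function $(\omega,y)\mapsto F(\omega)\varphi(y)$ lies in $L^q(\P\times\mu)$, the dual space. By the arbitrariness of $F$, I obtain $\int_{\R^d}\varphi(X_t)\,\d\mu=\int_{\R^d}\varphi K_t\,\d\mu$ $\P$-a.s.\ for each fixed $\varphi$.

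The only delicate point I anticipate is ensuring that the exceptional $\P$-null set on which this identity may fail is independent of $\varphi$; this is handled by testing along a countable uniformly dense subfamily of $C_c(\R^d)$ and using the uniform bound on $K_t$ to extend by approximation. Once a single $\P$-null set is secured, $(X_t(\omega,\cdot))_\#\mu=K_t(\omega,\cdot)\mu$ for $\P$-a.e.\ $\omega$, which completes the proof. Apart from this routine measurability step, no substantial new obstacle arises; the argument follows the strategy of \cite[Theorem 3.4]{FangLuoThalmaier}.
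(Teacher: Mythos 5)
Your argument is correct and is essentially the one the paper intends: the paper proves this proposition by invoking the uniform density bound \eqref{sect-2-prop-1.2} and the weak-compactness/identification argument of \cite[Theorem 3.4]{FangLuoThalmaier}, which is exactly your extraction of a weak limit of $K^{n}_t$ in the reflexive space $L^{q/(q-1)}(\P\times\mu)$, lower semicontinuity of the norm for the bound, and identification of the limit by testing against $F(\omega)\varphi(y)$ using the convergence from Proposition \ref{sect-2-prop-1}. The measurability caveat you raise (a single null set via a countable dense family of test functions) is the standard and correct resolution.
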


\begin{remark}
It follows from Proposition \ref{sect-2-prop-2} and H\"older's inequality that
  $$\aligned \int_0^{T} \E \int_{\R^d} |\sigma(X_s(x))|^2\,\d\mu(x)\d s&= \int_0^{T} \E \int_{\R^d} |\sigma(y)|^2 K_s(y)\,\d\mu(y)\d s\\
  &\leq T\|\sigma\|_{L^{2q}(\mu)}^2 \sup_{0\leq s\leq T} \|K_s\|_{L^{q/(q-1)}(\P\times \mu)}\\
  &\leq T\|\sigma\|_{L^{2q}(\mu)}^2 \mu(\R^d)^{(q_1-1)/q} \Lambda_{q_1,T} <\infty. \endaligned $$
Fubini's theorem implies $\E \int_0^{T} |\sigma(X_s(x))|^2\,\d s<\infty$ for $\mu$-a.e. $x\in\R^d$. Therefore, the process $[0,T] \ni t \to \int_0^t \sigma(X_s(x))\,\d B_s$ is a square integrable martingale.
\end{remark}

To show that $(X_t)_{0\leq t\leq T}$ solves the It\^o SDE \eqref{Ito-SDE}, we need the following preparation.

\begin{lemma}\label{2-lem-5} We have
  $$\lim_{n\to \infty}\E\int_{\R^d} \sup_{0\leq t\leq T}  \bigg|\int_0^t\big[\sigma_n(X^n_s)-\sigma(X_s)\big]\d B_s\bigg|^2 \d \mu(x)=0$$
and
  $$\lim_{n\to \infty}\E\int_{\R^d} \sup_{0\leq t\leq T} \bigg|\int_0^t\big[b_n(X^n_s)-b(X_s)\big]\d s\bigg|^2 \d \mu(x)=0.  $$
\end{lemma}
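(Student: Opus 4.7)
The two limits are of the same nature, so I describe the argument for the stochastic integral; the Lebesgue one is identical once Burkholder's inequality is replaced by the Cauchy--Schwarz bound $\sup_{t\leq T}|\int_0^t f\,\d s|^2\leq T\int_0^T|f|^2\,\d s$. First I would apply Burkholder's inequality to reduce the claim to
\[
I_n:=\int_0^T\E\int_{\R^d}|\sigma_n(X_s^n)-\sigma(X_s)|^2\,\d\mu\,\d s\longrightarrow 0,
\]
and then split $\sigma_n(X_s^n)-\sigma(X_s)=[\sigma_n-\sigma](X_s^n)+[\sigma(X_s^n)-\sigma(X_s)]$, giving $I_n\leq 2I_n^{(1)}+2I_n^{(2)}$.

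For $I_n^{(1)}$ the change-of-variables formula gives $\E\int_{\R^d}|\sigma_n-\sigma|^2(y)\,K_s^n(y)\,\d\mu(y)$, and H\"older's inequality with conjugate exponents $q/q_1$ and $q/(q-q_1)$, combined with the uniform density bound \eqref{sect-2.1}, yields $I_n^{(1)}\leq T\Lambda_{q_1,T}\,\|\sigma_n-\sigma\|_{L^{2q/q_1}(\mu)}^2$. Now $\sigma\in L^{2q}(\mu)\subset L^{2q/q_1}(\mu)$ by Remark \ref{1-remark-0} (using $q_1>1$ and the finiteness of $\mu$), and $\sigma_n\to\sigma$ in $L^{2q/q_1}(\mu)$ follows from local $L^{2q/q_1}$--convergence of the mollification $\sigma\ast\chi_n$, the uniform $L^{2q}(\mu)$--bound on $\sigma_n$ established in the proof of Proposition \ref{sect-2-prop-1}, and a tail cut-off on $B_R^c$; hence $I_n^{(1)}\to 0$.

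For $I_n^{(2)}$ the function $\sigma$ need not be continuous, so I would fix $\varepsilon>0$, choose $\tilde\sigma\in C_c(\R^d,\mathcal M_{d\times m})$ with $\|\sigma-\tilde\sigma\|_{L^{2q/q_1}(\mu)}<\varepsilon$ (density of $C_c$), and use the triangle inequality
\[
|\sigma(X_s^n)-\sigma(X_s)|\leq|\sigma-\tilde\sigma|(X_s^n)+|\tilde\sigma(X_s^n)-\tilde\sigma(X_s)|+|\sigma-\tilde\sigma|(X_s).
\]
The first and third terms are controlled exactly as in $I_n^{(1)}$, via change of variables applied to the densities $K_s^n$ and $K_s$, contributing at most $C\Lambda_{q_1,T}\varepsilon^2$ uniformly in $n$. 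For the middle term, Proposition \ref{sect-2-prop-1} furnishes $L^2(\P\times\mu;C([0,T],\R^d))$--convergence of $X^n$ to $X$, hence a.s.\ uniform convergence along a sub-subsequence; continuity and boundedness of $\tilde\sigma$, combined with dominated convergence (dominated by $4\|\tilde\sigma\|_\infty^2$ against the finite measure $\P\times\mu\times\d s$), annihilate this contribution, and the usual subsequence principle upgrades the vanishing to the full sequence. Sending $n\to\infty$ and then $\varepsilon\downarrow 0$ gives $I_n^{(2)}\to 0$. The drift statement is handled verbatim using $b\in L^{2q}(\mu)$ from Remark \ref{1-remark-0}.

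The main obstacle is the continuous-function approximation step for $I_n^{(2)}$: since $\sigma$ and $b$ only possess Osgood/Sobolev-type regularity, one cannot deduce $\sigma(X^n)\to\sigma(X)$ directly from $X^n\to X$, and the detour through a uniformly continuous surrogate $\tilde\sigma$ (and $\tilde b$) is essential. Everything else is a routine combination of H\"older's inequality with the already-established uniform density bound \eqref{sect-2.1}.
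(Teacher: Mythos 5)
Your proposal is correct and follows essentially the same route as the paper: the paper's proof simply records the convergence $\sigma_n\to\sigma$, $b_n\to b$ in $L^{2q}(\mu)$ and then defers to the argument of \cite[Proposition 4.1]{FangLuoThalmaier}, which is exactly the decomposition, change-of-variables-with-uniform-density-bound, and continuous-surrogate approximation that you spell out. Your write-up just makes explicit the details the paper leaves to the citation.
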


\begin{proof}
It is clear that
  $$\lim_{n\to\infty}\|\sigma_n-\sigma\|_{L^{2q}(\mu)}=0  \quad\mbox{and}\quad \lim_{n\to\infty}\|b_n-b\|_{L^{2q}(\mu)}=0.$$
Combining these limits with Propositions \ref{sect-2-prop-1} and \ref{sect-2-prop-2}, and making use of the uniform density estimate \eqref{sect-2-prop-1.2}, we can finish the proof as in \cite[Proposition 4.1]{FangLuoThalmaier}.
\end{proof}

For any $n\geq1$, we rewrite the equation \eqref{2-smooth-SDE} in the integral form:
  \begin{equation}\label{smooth-SDE-integral}
  X^n_t(x)=x+\int_0^t\sigma_n(X^n_s)\,\d B_s +\int_0^t b_n(X^n_s)\,\d s.
  \end{equation}
When $n\to +\infty$, by Proposition \ref{sect-2-prop-1} and Lemma \ref{2-lem-5}, the two sides of \eqref{smooth-SDE-integral} converge respectively to $X$ and
  \begin{equation*}
  x+\int_0^{\textstyle\cdot} \sigma(X_s)\,\d B_s +\int_0^{\textstyle\cdot} b(X_s)\,\d s.
  \end{equation*}
Therefore, for almost all $x\in\R^d$, the following equality holds $\P$-almost surely:
  $$X_t(x)=x+\int_0^t \sigma(X_s)\,\d B_s +\int_0^t b(X_s)\,\d s,\quad\mbox{for all }t\in[0,T].$$
That is to say, $X_t$ solves SDE \eqref{Ito-SDE} over the time interval $[0,T]$. By Corollary \ref{sect-2-cor}, such a solution is also unique, thus we finish the proof of Theorem \ref{1-thm-0}.

\section{Proof of Theorem \ref{1-thm-1}}

We shall prove Theorem \ref{1-thm-1} by following the line of arguments in Section 3 and establishing some analogous estimates. We focus on the differences in this setting and will not provide detailed proofs for all the results.

We first give an a-priori estimate on the second moment of the flow. The main difference from Lemma \ref{sect-2-lem-1} is that we adopt here the Lebesgue measure as the reference measure, thus the integral is restricted on a finite ball $B_R$ and the averaged Radon--Nikodym density is assumed to be bounded in the space and time variables.

\begin{lemma}\label{2-lem-1}
Assume the conditions of Theorem \ref{1-thm-1}. Let $X_t$ be a generalized stochastic flow associated to It\^o SDE \eqref{Ito-SDE} and $K_t$ its Radon--Nikodym density. Assume that $\Lambda_T:=\sup\{\E K_t(x):(t,x)\in [0,T]\times \R^d\}<+\infty$. Then for any $R>0$
  $$\E\int_{B_R}\|X_\cdot (x)\|_{\infty,T}^2\,\d x \leq C_{T,r}\big((1+R^2)\L^d(B_R)+ T\Lambda_T \|b\|_{L^1 (B_r)}\big).$$
where the number $r$ comes from condition (ii) in Theorem \ref{1-thm-1} and the constant $C_{T,r}$ depends on the growth properties of $\sigma, b$.
\end{lemma}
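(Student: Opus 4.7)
The plan is to apply It\^o's formula to $|X_t(x)|^2$ and estimate the resulting terms in $L^1(B_R)$, in the same spirit as Lemma \ref{sect-2-lem-1} but with the Lebesgue measure replacing $\mu$ and with a refined splitting of the drift dictated by hypothesis \eqref{1-thm-1.2}. First I would introduce the stopping time $\tau_N(x)=\inf\{t>0:|X_t(x)|\geq N\}$ to localize, so that
  \begin{equation*}
  |X_{t\wedge\tau_N}|^2 = |x|^2 + 2\int_0^{t\wedge\tau_N}\<X_s,\sigma(X_s)\,\d B_s\> + 2\int_0^{t\wedge\tau_N}\<X_s,b(X_s)\>\,\d s + \int_0^{t\wedge\tau_N}\|\sigma(X_s)\|^2\,\d s.
  \end{equation*}
Since $\sigma\in C_b^2$, the quadratic variation term is bounded by $T\|\sigma\|_\infty^2$, and Burkholder--Davis--Gundy together with Young's inequality gives
  \begin{equation*}
  2\,\E\sup_{t\leq T}\Big|\int_0^{t\wedge\tau_N}\<X_s,\sigma(X_s)\,\d B_s\>\Big|\leq \tfrac12\,\E\sup_{s\leq T\wedge\tau_N}|X_s|^2+C_T,
  \end{equation*}
so that this martingale contribution can be absorbed into the left-hand side after passing to the supremum.

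The heart of the argument is the drift inner product. Using $\bar b\in L^\infty(B_r^c)$, I would split
  \begin{equation*}
  |\<X_s,b(X_s)\>|\leq \|\bar b\|_{L^\infty(B_r^c)}\,|X_s|(1+|X_s|)\ch_{B_r^c}(X_s)+r\,|b(X_s)|\ch_{B_r}(X_s)\leq C\bigl(1+|X_s|^2\bigr)+r\,|b(X_s)|\ch_{B_r}(X_s),
  \end{equation*}
where $C$ depends only on $r$ and $\|\bar b\|_{L^\infty(B_r^c)}$. Once integrated in $x\in B_R$, the first summand produces a term of Gronwall type, $C\int_0^T\E\int_{B_R}\sup_{u\leq s\wedge\tau_N}|X_u|^2\,\d x\,\d s$, which causes no trouble. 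For the second summand I would exploit the defining property $(X_s)_\#\L^d=K_s\L^d$ of the density together with Fubini and the hypothesis $\E K_s\leq\Lambda_T$:
  \begin{equation*}
  \int_{B_R}\E\int_0^{T\wedge\tau_N}|b(X_s)|\ch_{B_r}(X_s)\,\d s\,\d x\leq \int_0^T\int_{B_r}|b(y)|\,\E K_s(y)\,\d y\,\d s\leq T\Lambda_T\|b\|_{L^1(B_r)}.
  \end{equation*}

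Collecting these estimates, I obtain a Gronwall inequality for $F(T):=\E\int_{B_R}\sup_{t\leq T\wedge\tau_N}|X_t(x)|^2\,\d x$ of the form
  \begin{equation*}
  F(T)\leq C_{T,r}\bigl((1+R^2)\L^d(B_R)+T\Lambda_T\|b\|_{L^1(B_r)}\bigr)+C'\int_0^T F(s)\,\d s,
  \end{equation*}
which yields the claimed bound uniformly in $N$; Fatou's lemma upon letting $N\to\infty$ concludes the proof. The main subtlety is the second piece of the drift splitting: since $b$ may be locally unbounded near the origin, one cannot integrate $|b(X_s)|\ch_{B_r}(X_s)$ pointwise in $x\in B_R$, and must instead transfer the integration back to $B_r$ through the density, using precisely the bound $\E K_s\leq\Lambda_T$ to convert the intractable $\int_{B_R}|b\circ X_s|\,\d x$ into the finite quantity $\|b\|_{L^1(B_r)}$.
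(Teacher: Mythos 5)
Your proposal is correct and follows essentially the same route as the paper's proof: It\^o's formula with a localizing stopping time, Burkholder plus Young to absorb the martingale term, the splitting of $\<X_s,b(X_s)\>$ according to $\{|X_s|\leq r\}$ versus $\{|X_s|>r\}$ using $\bar b\in L^\infty(B_r^c)$, transfer of the local drift term to $\|b\|_{L^1(B_r)}$ via the density bound $\E K_s\leq\Lambda_T$, Gronwall, and Fatou. The only cosmetic difference is that the paper applies Gronwall pointwise in $x$ before integrating over $B_R$, whereas you integrate first; both orders work.
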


\begin{proof}
We follow the idea in \emph{Step 2} of the proof of \cite[Theorem 2.2]{Zhang13}. For $\lambda>0$, let $\tau_\lambda(x)=\inf\{t>0: |X_t(x)|\geq \lambda\}$. We omit the space variable $x$ to simplify notations. By the It\^o formula, for any $t\leq T$,
  \begin{equation}\label{2-lem-1.1}
  \begin{split}
  \E\sup_{s\leq t}|X_{s\wedge \tau_\lambda}|^2&\leq |x|^2+2\,\E\sup_{s\leq t}\int_0^{s\wedge \tau_\lambda}\<X_u,\sigma(X_u)\,\d B_u\>\\
  &\hskip14pt + 2\,\E\sup_{s\leq t}\int_0^{s\wedge \tau_\lambda}\<X_u,b(X_u)\>\,\d u+ \E\sup_{s\leq t}\int_0^{s\wedge \tau_\lambda} |\sigma(X_u)|^2\,\d u\\
  &=: |x|^2+I_1(t)+I_2(t)+I_3(t).
  \end{split}
  \end{equation}
First, by Burkholder's inequality,
  \begin{align*}
  I_1(t)&\leq 4\,\E \bigg[\bigg(\int_0^{t\wedge \tau_\lambda} |\sigma(X_u)^\ast X_u|^2\,\d u\bigg)^{1/2}\bigg]\\
  &\leq 4\, \E \bigg[\sup_{u\leq t\wedge \tau_\lambda}|X_u| \cdot \bigg(\int_0^{t\wedge \tau_\lambda} |\sigma(X_u)|^2\,\d u\bigg)^{1/2}\bigg]\\
  &\leq \frac12\, \E\sup_{u\leq t\wedge \tau_\lambda}|X_u|^2  +8 \E \int_0^{t} |\sigma(X_{u\wedge \tau_\lambda})|^2\,\d u,
  \end{align*}
where the last step follows from Cauchy's inequality. Since $\sigma$ is bounded, we arrive at
  \begin{equation}\label{2-lem-1.2}
  I_1(t) \leq \frac12\, \E\sup_{u\leq t\wedge \tau_\lambda}|X_u|^2 +C_1t.
  \end{equation}
Similarly,
  \begin{equation}\label{2-lem-1.3}
  I_3(t) \leq C_1t.
  \end{equation}
Next,
  \begin{equation}\label{2-lem-1.4}
  \begin{split}
  I_2(t)&\leq 2\, \E \int_0^{t\wedge\tau_\lambda} |\<X_u, b(X_u)\>|\big(\ch_{\{|X_u|\leq r\}}+ \ch_{\{|X_u|> r\}}\big)\,\d u\\
  &\leq 2r \E \int_0^{t} |b(X_u)|\ch_{\{|X_u|\leq r\}}\,\d u +2 \E \int_0^{t} C_2(1+|X_{u\wedge\tau_\lambda}|^2)\,\d u.
  \end{split}
  \end{equation}
where in the last step we used the linear growth property of $b$ outside the ball $B_r$. Denote by
  $$\xi_t=\E\sup_{s\leq t}|X_{s\wedge \tau_\lambda}|^2=\E\sup_{s\leq t\wedge \tau_\lambda}|X_{s}|^2.$$
Combining \eqref{2-lem-1.1}--\eqref{2-lem-1.4} yields
  $$\xi_t\leq 2|x|^2+ 4r\E \int_0^{t} |b(X_u)|\ch_{\{|X_u|\leq r\}}\,\d u +C_3 \int_0^t (1+\xi_u)\,\d u.$$
Gronwall's inequality gives us
  $$\xi_T\leq C_{T,r}\bigg(1+|x|^2+\E \int_0^{T} |b(X_u)|\ch_{\{|X_u|\leq r\}}\,\d u\bigg).$$

By the definition of $\Lambda_T$, we have
  \begin{equation*}
  \begin{split}
  \int_{B_R}\E\sup_{s\leq T\wedge \tau_\lambda}|X_{s}|^2\,\d x&\leq C_{T,r}\bigg((1+R^2)\L^d(B_R)+ \int_0^{T} \E \int_{B_R}|b(X_u)|\ch_{\{|X_u|\leq r\}}\,\d x\d u\bigg)\\
  &\leq C_{T,r}\bigg((1+R^2)\L^d(B_R)+ \int_0^{T}\E \int_{B_r}|b(y)| K_u(y)\,\d y\d u\bigg)\\
  &\leq C_{T,r}\big((1+R^2) \L^d(B_R)+ T\Lambda_T \|b\|_{L^1(B_r)}\big).
  \end{split}
  \end{equation*}
Since the right hand side is independent of $\lambda$, Fatou's lemma yields the desired result.
\end{proof}

Next we shall establish in the current setting a stability estimate which is similar to Lemma \ref{sect-2-lem-2}. Recall the definition of $\psi_\delta$ in Section 2.

\begin{lemma}[Stability estimate]\label{2-lem-2}
Suppose that $\sigma,\tilde\sigma\in L^2_{loc}(\R^d, \mathcal{M}_{d\times m})$ and $b,\tilde b\in L^1_{loc}(\R^d, \R^d)$. Moreover, $\sigma$ and $b$ satisfy the assumptions in Theorem \ref{1-thm-1}. Let $X_t$ (resp. $\tilde X_t$) be the $\L^d$-a.e. stochastic flow associated to the It\^o SDE \eqref{Ito-SDE} with coefficients $\sigma$ and $b$ (resp. $\tilde\sigma$ and $\tilde b$). Denote by $K_t$ (resp. $\tilde K_t$) the Radon--Nikodym density of $X_t$ (resp. $\tilde X_t$) with respect to $\L^d$. Assume that
  \begin{equation}\label{2-lem-2.1}
  \Lambda_T:=\sup_{0\leq t\leq T,\, x\in\R^d}\big[\E K_t(x)\vee \E \tilde K_t(x)\big]<+\infty.
  \end{equation}
Then for any $\delta>0$,
  \begin{align*}
  \E\int_{B_R\cap G_\lambda}\psi_\delta\big(\|X-\tilde X\|^2_{\infty,T}\big)\,\d x  & \leq C_{d,R,T}\bigg[1+ \|g_{2\lambda}\|_{L^1(B_\lambda)} +\frac1{\delta}\|\sigma-\tilde\sigma\|_{L^2(B_\lambda)}^2\\
  &\hskip45pt +\frac1{\sqrt\delta}\big(\|\sigma-\tilde\sigma\|_{L^2(B_\lambda)}+ \|b-\tilde b\|_{L^1(B_\lambda)}\big)\bigg],
  \end{align*}
where $G_\lambda(\omega):=\big\{x\in \R^d:\|X_\cdot(\omega,x)\|_{\infty,T} \vee\|\tilde X_\cdot(\omega,x)\|_{\infty,T} \leq \lambda\big\}$ and $C_{d,R,T}$ depends on $d,R,T$ and $\Lambda_T$.
\end{lemma}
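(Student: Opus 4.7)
The plan is to mirror the proof of Lemma~\ref{sect-2-lem-2}, with two substitutions reflecting the different hypotheses: the pointwise density bound $\Lambda_T$ in \eqref{2-lem-2.1} plays the role of the $L^{q/(q-1)}$-norm control \eqref{sect-2-lem-2.1}, while the global Lipschitz constant $L$ of $\sigma\in C_b^2$ replaces the local maximal function of $\nabla\sigma$ that appeared via \eqref{Sobolev.1}. Setting $Z_t=X_t-\tilde X_t$ and $\xi_t=|Z_t|^2$, applying It\^o's formula first to $|Z_t|^2$ and then to $\psi_\delta(\xi_t)$ and using the concavity $\psi_\delta''\leq 0$ from \eqref{auxi-funct} yields a decomposition $\d\psi_\delta(\xi_t)\leq \d I_1(t)+\d I_2(t)+\d I_3(t)$ structurally identical to \eqref{sect-2-lem-2.2}. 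Introducing $\tau_\lambda(x)=\inf\{t>0:|X_t(x)|\vee|\tilde X_t(x)|>\lambda\}$, one has $G_\lambda\subset\{\tau_\lambda\geq T\}$ a.s., so it suffices to bound $\E\int_{B_R}\sup_{t\leq T\wedge\tau_\lambda}|I_j(t)|\,\d x$ for each $j\in\{1,2,3\}$.

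For the martingale term $I_1$ I would apply Burkholder's inequality together with $\rho(\xi_s)+\delta\geq\xi_s$, then split $\sigma(X_s)-\tilde\sigma(\tilde X_s)$ into $[\sigma(X_s)-\sigma(\tilde X_s)]+[\sigma(\tilde X_s)-\tilde\sigma(\tilde X_s)]$ and pass from an $L^2$-in-$\omega$ estimate to an $L^1$-in-$(\omega,x)$ estimate via Cauchy--Schwarz on $x\in B_R$. The Lipschitz piece obeys $\|\sigma(X_s)-\sigma(\tilde X_s)\|^2\leq L^2\xi_s$, which together with $\rho(\xi_s)+\delta\geq\xi_s$ gives a pointwise bound $L^2$ and contributes only a constant absorbed in $C_{d,R,T}$; the remaining piece, after pushing forward through $\tilde K_s$ and using $\E\tilde K_s\leq\Lambda_T$, yields the $\delta^{-1/2}\|\sigma-\tilde\sigma\|_{L^2(B_\lambda)}$ contribution. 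For $I_2$, the analogous splitting $b(X_s)-\tilde b(\tilde X_s)=[b(X_s)-b(\tilde X_s)]+[b(\tilde X_s)-\tilde b(\tilde X_s)]$ combined with $(\mathbf{H}_1)$ applied at radius $2\lambda$ (using $\rho(\xi_s)/(\rho(\xi_s)+\delta)\leq 1$) absorbs the denominator and produces, after pushing forward, the term $\|g_{2\lambda}\|_{L^1(B_\lambda)}$; the $b-\tilde b$ piece is controlled by $|Z_s|/(\rho(\xi_s)+\delta)\leq 1/(2\sqrt\delta)$ together with the density bound, yielding $\delta^{-1/2}\|b-\tilde b\|_{L^1(B_\lambda)}$. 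Term $I_3$ is treated exactly as $I_1$ but without Burkholder: the Lipschitz part again contributes a pure constant while the remainder produces $\delta^{-1}\|\sigma-\tilde\sigma\|_{L^2(B_\lambda)}^2$. Summing the three estimates gives the claimed inequality.

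No substantial obstacle is expected. The delicate bookkeeping point worth flagging is that the $\sigma$-Lipschitz contributions from $I_1$ and $I_3$ must come out independent of both $\delta$ and $\lambda$: this relies crucially on $\rho(\xi_s)+\delta\geq\xi_s$, which exactly cancels the quadratic smallness of $\|\sigma(X_s)-\sigma(\tilde X_s)\|^2$ against the denominator of $\psi_\delta'$ and prevents any spurious blow-up in $\delta$. All $\lambda$-dependence and all singular $\delta$-dependence are then confined to the three explicit norms $\|g_{2\lambda}\|_{L^1(B_\lambda)}$, $\|\sigma-\tilde\sigma\|_{L^2(B_\lambda)}$ and $\|b-\tilde b\|_{L^1(B_\lambda)}$ appearing on the right-hand side, matching the form of the stated estimate.
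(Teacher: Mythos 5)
Your proposal is correct and follows essentially the same route as the paper's proof: the identical Itô decomposition into $I_1,I_2,I_3$, the stopping time $\tau_\lambda$ with $B_R\cap G_\lambda\subset\{\tau_\lambda>T\}$, the splitting of $\sigma(X_s)-\tilde\sigma(\tilde X_s)$ and $b(X_s)-\tilde b(\tilde X_s)$ into a same-coefficient part (handled by the global Lipschitz bound on $\sigma$, resp.\ $(\mathbf{H}_1)$ at radius $2\lambda$) and a coefficient-difference part (handled by $\rho(\xi)+\delta\geq\xi+\delta\geq\delta$ and the push-forward through the densities with $\E K_s,\E\tilde K_s\leq\Lambda_T$). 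The bookkeeping point you flag — that $\rho(\xi_s)+\delta\geq\xi_s$ cancels the quadratic Lipschitz smallness so the constant contributions are uniform in $\delta$ and $\lambda$ — is exactly how the paper obtains the $C_{R,T}$ terms.
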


\begin{proof}
We follow the proof of Lemma \ref{sect-2-lem-2}. Denote by $Z_t=X_t-\tilde X_t$ and $\xi_t=|Z_t|^2$ where we omit the space variable $x$. By It\^o's formula, we still have
  \begin{equation}\label{2-lem-2.2}
  \begin{split}
  \d\psi_\delta(\xi_{t})&\leq 2\frac{\<Z_t,(\sigma(X_t)- \tilde\sigma(\tilde X_t))\,\d B_t\>}{\rho(\xi_t)+\delta} +2\frac{\<Z_t,b(X_t)-\tilde b(\tilde X_t)\>}{\rho(\xi_t)+\delta}\,\d t + \frac{\|\sigma(X_t)- \tilde\sigma(\tilde X_t)\|^2}{\rho(\xi_t)+\delta}\,\d t\\
  &=: \d I_1(t)+ \d I_2(t)+ \d I_3(t).
  \end{split}
  \end{equation}

We first estimate the term $I_1(t)$. For any $x\in B_R$ and $\lambda>0$, we define $\tau_\lambda(x)=\inf\{t>0: |X_t(x)|\vee |\tilde X_t(x)|>\lambda\}$. Note that $B_R\cap G_\lambda\subset \{x\in B_R:\tau_\lambda(x)>T\}$. Thus
  \begin{equation}\label{2-lem-2.3}
  \E\int_{B_R\cap G_\lambda}\sup_{0\leq t\leq T}|I_1(t)|\,\d x \leq \E\int_{B_R} \sup_{0\leq t\leq T\wedge\tau_\lambda} |I_1(t)|\,\d x.
  \end{equation}
By Burkholder's inequality, we have
  \begin{align*}
  \E\sup_{0\leq t\leq T\wedge\tau_\lambda} |I_1(t)|^2&\leq 4\, \E\int_0^{T\wedge\tau_\lambda} \frac{\big|(\sigma(X_s)-\tilde\sigma(\tilde X_s))^\ast Z_s\big|^2}{(\rho(\xi_s)+\delta)^2}\,\d s\\
  &\leq 4\, \E\int_0^{T\wedge\tau_\lambda} \frac{\|\sigma(X_s)- \tilde\sigma(\tilde X_s)\|^2}{\xi_s+\delta}\,\d s
  \end{align*}
since $\rho(\xi)\geq \xi\geq 0$. As a result,
  \begin{equation}\label{2-lem-2.4}
  \begin{split}
  \int_{B_R}\E \sup_{0\leq t\leq T\wedge\tau_\lambda} |I_1(t)| \,\d x&\leq \int_{B_R} 2\bigg[\E\int_0^{T\wedge\tau_\lambda} \frac{\|\sigma(X_s)- \tilde\sigma(\tilde X_s)\|^2} {\xi_s+\delta}\,\d s\bigg]^{1/2}\,\d x\\
  &\leq C_R \bigg[\int_{B_R} \E\int_0^{T\wedge\tau_\lambda} \frac{\|\sigma(X_s)- \tilde\sigma(\tilde X_s)\|^2}{\xi_s+\delta}\,\d s \d x\bigg]^{1/2}.
  \end{split}
  \end{equation}
where $C_R=2(\L_d(B_R))^{1/2}$. Since $\|\sigma(X_s)- \tilde\sigma(\tilde X_s)\|\leq \|\sigma(X_s)- \sigma(\tilde X_s)\|+\|\sigma(\tilde X_s)- \tilde\sigma(\tilde X_s)\|$ and $\sigma$ is globally Lipschitz continuous, we have
  \begin{align*}
  \int_{B_R}\E \sup_{0\leq t\leq T\wedge\tau_\lambda} |I_1(t)| \,\d x&\leq C_{R,T}+\sqrt{2}C_R \bigg[\int_{B_R} \E\int_0^{T\wedge\tau_\lambda} \frac{\|\sigma(\tilde X_s)- \tilde\sigma(\tilde X_s)\|^2} {\xi_s+\delta}\,\d s \d x\bigg]^{1/2}\\
  &\leq C_{R,T}+\sqrt{\frac2\delta}\, C_R \bigg[\int_0^T \E\int_{B_R}{\bf 1}_{\{\tau_\lambda>s\}} \|\sigma(\tilde X_s)- \tilde\sigma(\tilde X_s)\|^2\, \d x\d s\bigg]^{1/2},
  \end{align*}
where in the second step we have used the Fubini theorem. We have by \eqref{2-lem-2.1} that
  \begin{align*}
  \E\int_{B_R}{\bf 1}_{\{\tau_\lambda>s\}} \|\sigma(\tilde X_s)- \tilde\sigma(\tilde X_s)\|^2\, \d x&\leq \E\int_{B_\lambda} \|\sigma(y)- \tilde\sigma(y)\|^2\tilde K_s(y)\, \d y\\
  &\leq \Lambda_T \int_{B_\lambda} \|\sigma(y)- \tilde\sigma(y)\|^2 \, \d y.
  \end{align*}
Combining these arguments with \eqref{2-lem-2.3} gives us
  \begin{equation}\label{2-lem-2.5}
  \E\int_{B_R\cap G_\lambda}\sup_{0\leq t\leq T}|I_1(t)|\,\d x \leq C_{R,T}+\sqrt{\frac{2T \Lambda_T}\delta}\, C_R\|\sigma-\tilde\sigma\|_{L^2(B_\lambda)}.
  \end{equation}

Next, we treat the second term $I_2(t)$. We have
  \begin{align*}
  \E\int_{B_R\cap G_\lambda}\sup_{0\leq t\leq T}|I_2(t)|\,\d x& \leq \E\int_{B_R}\sup_{0\leq t\leq T\wedge\tau_\lambda}|I_2(t)|\,\d x\\
  &\leq 2\, \E\int_{B_R}\int_0^{T\wedge\tau_\lambda} \frac{|\<Z_s,b(X_s)-\tilde b(\tilde X_s)\>|}{\rho(\xi_s)+\delta}\,\d s\d x\\
  &=2 \int_0^T \E\int_{B_R}{\bf 1}_{\{\tau_\lambda>s\}} \frac{|\<Z_s,b(X_s)-\tilde b(\tilde X_s)\>|}{\rho(\xi_s)+\delta}\,\d x\d s.
  \end{align*}
Note that $|\<Z_s,b(X_s)-\tilde b(\tilde X_s)\>|\leq |\<Z_s,b(X_s)-b(\tilde X_s)\>|+|\<Z_s,b(\tilde X_s)-\tilde b(\tilde X_s)\>|$. We deduce from $(\mathbf{H}_1)$ that
  \begin{align*}
  \E\int_{B_R}{\bf 1}_{\{\tau_\lambda>s\}} \frac{|\<Z_s,b(X_s)-b(\tilde X_s)\>|}{\rho(\xi_s)+\delta}\,\d x &\leq \E\int_{B_R}{\bf 1}_{\{\tau_\lambda>s\}} \big[g_{2\lambda}(X_s) + g_{2\lambda}(\tilde X_s)\big]\,\d x\\
  &\leq \E\int_{B_\lambda} g_{2\lambda}(y) \big[K_s(y)+\tilde K_s(y)\big]\,\d y,
  \end{align*}
which, combined with \eqref{2-lem-2.1}, gives us
  $$ \int_0^T \E\int_{B_R}{\bf 1}_{\{\tau_\lambda>s\}} \frac{|\<Z_s,b(X_s)-b(\tilde X_s)\>|}{\rho(\xi_s)+\delta}\,\d x\d s\leq 2T\Lambda_T\|g_{2\lambda}\|_{L^1(B_\lambda)}.$$
Moreover, using again the fact that $\rho(\xi) \geq \xi \geq 0$, we obtain
  \begin{align*}
  \int_0^T \E\int_{B_R}{\bf 1}_{\{\tau_\lambda>s\}} \frac{|\<Z_s,b(\tilde X_s)-\tilde b(\tilde X_s)\>|}{\rho(\xi_s)+\delta}\,\d x\d s&\leq \int_0^T \E\int_{B_R}{\bf 1}_{\{\tau_\lambda>s\}} \frac{|b(\tilde X_s)-\tilde b(\tilde X_s)|}{\sqrt{\xi_s +\delta}}\,\d x\d s\\
  &\leq \frac1{\sqrt{\delta}} \int_0^T \E\int_{B_\lambda} |b(y)-\tilde b(y)| \tilde K_s(y)\,\d y\d s\\
  &\leq \frac{T\Lambda_T}{\sqrt{\delta}}\|b-\tilde b\|_{L^1(B_\lambda)}.
  \end{align*}
Consequently,
  \begin{equation}\label{2-lem-2.6}
  \begin{split}
  \E\int_{B_R\cap G_\lambda}\sup_{0\leq t\leq T}|I_2(t)|\,\d x &\leq 2T\Lambda_T\bigg(2\|g_{2\lambda}\|_{L^1(B_\lambda)} + \frac1{\sqrt{\delta}}\|b-\tilde b\|_{L^1(B_\lambda)}\bigg).
  \end{split}
  \end{equation}

Finally, the term $I_3(t)$ can be estimated as that on the right hand side of \eqref{2-lem-2.4}, thus we have
  \begin{align*}
  \E\int_{B_R\cap G_\lambda}\sup_{0\leq t\leq T}|I_3(t)|\,\d x &\leq \E\int_{B_R} \int_0^{T\wedge\tau_\lambda} \frac{\|\sigma(X_s)- \tilde\sigma(\tilde X_s)\|^2}{\rho(\xi_s)+\delta}\,\d s\d x\\
  &\leq C_{R,T}+\frac2\delta T\Lambda_T \|\sigma-\tilde\sigma\|_{L^2(B_\lambda)}^2.
  \end{align*}
Combining this estimate with \eqref{2-lem-2.5} and \eqref{2-lem-2.6}, we obtain the desired result.
\end{proof}

Now we start to prove the existence of generalized stochastic flow associated to \eqref{Ito-SDE}. In order to apply Lemma \ref{2-lem-2}, we need a uniform estimate on the  Radon--Nikodym densities. First, in the smooth case, we have (see \cite[Lemma 3.1]{Zhang10} for its proof)

\begin{lemma}\label{2-lem-3}
Suppose that $\sigma$ and $b$ are smooth with compact supports. Then for any $p\geq 1$, the Radon--Nikodym density $K_t:=\frac{\d(X_t)_\#\L^d}{\d\L^d}$ satisfies
  \begin{align*}
  &\hskip14pt \sup_{(t,x)\in[0,T]\times\R^d}\E K_t^p(x)\\
  &\leq \exp\bigg\{pT \bigg\|\Big(\frac p2 |\div(\sigma)|^2 -\div(b) +\sum_{k=1}^m \sum_{i,j=1}^d \Big[\frac12(\partial_i \sigma^{jk})(\partial_j \sigma^{ik}) + \sigma^{ik}\partial_{ij}\sigma^{jk}\Big] \Big)^+ \bigg\|_\infty\bigg\},
  \end{align*}
where $\div(\sigma)=(\div(\sigma^{1\cdot}),\cdots,\div(\sigma^{m\cdot}))$ are the divergences of the column vectors of $\sigma$.
\end{lemma}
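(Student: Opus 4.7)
The plan is to identify $y \mapsto \E K_t^p(y)$ as the solution of a linear parabolic PDE with bounded coefficients, and then invoke the parabolic maximum principle. Since $\sigma$ and $b$ are smooth with compact support, the SDE \eqref{Ito-SDE} generates a flow of $C^\infty$-diffeomorphisms $X_t$; the Jacobian $J_t(x) := \det(\nabla X_t(x))$ is well defined and $K_t(y) = J_t\big(X_t^{-1}(y)\big)^{-1}$. Differentiating \eqref{Ito-SDE} in the spatial variable and applying the It\^o--Jacobi formula for the determinant gives the explicit representation
  \begin{equation*}
  d\log J_t(x) = \div(\sigma)(X_t)\cdot dB_t + \bigg[\div(b) - \tfrac12 \sum_{k,i,j}(\partial_j\sigma^{ik})(\partial_i\sigma^{jk})\bigg](X_t)\,dt,
  \end{equation*}
in which all coefficients are bounded by our hypotheses.

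Next, for a test function $f\geq 0$, I would use the change of variable $y = X_t(x)$ together with $(X_t)_\#\L^d = K_t\L^d$ to rewrite
  \begin{equation*}
  \int \E K_t^p(y)\, f(y)\,dy = \E\int J_t(x)^{-(p-1)}\, f\big(X_t(x)\big)\,dx.
  \end{equation*}
Inserting the formula above for $\log J_t$, the factor $J_t^{-(p-1)}$ splits into a Dol\'eans--Dade exponential martingale and an exponential drift. Girsanov's theorem then absorbs the martingale into an equivalent measure $Q$ under which $X_t$ solves an It\^o SDE with the same diffusion $\sigma$ but modified drift
  $$\tilde b^i = b^i - (p-1)\sum_{j,k}\sigma^{ik}\partial_j\sigma^{jk}.$$
Feynman--Kac then identifies $g_t(y) := \E K_t^p(y)$ as the unique solution of the linear parabolic equation $\partial_t g = L^{Q*}g + V^Q g$ with $g_0\equiv 1$, where $V^Q$ is an explicit bounded potential and $L^{Q*}$ is the formal adjoint of the generator of the $Q$-diffusion.

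Writing $L^{Q*}$ in non-divergence form extracts a zero-order coefficient $Z = \tfrac12 \partial_{ij}a^{ij} - \partial_i \tilde b^i$. A direct algebraic computation -- using $\partial_i(\sigma^{ik}\partial_j\sigma^{jk}) = |\div(\sigma)|^2 + \sum\sigma^{ik}\partial_{ij}\sigma^{jk}$ and the analogous expansion of $\tfrac12\partial_{ij}a^{ij}$ -- collects $Z + V^Q$ into exactly
  $$Z + V^Q = p\bigg[\tfrac{p}{2}|\div(\sigma)|^2 - \div(b) + \sum_{k,i,j}\Big(\tfrac12(\partial_i\sigma^{jk})(\partial_j\sigma^{ik}) + \sigma^{ik}\partial_{ij}\sigma^{jk}\Big)\bigg],$$
which is $p$ times the integrand appearing in the statement. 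The parabolic maximum principle applied to $g_t$ yields $\|g_t\|_\infty \leq \exp(t\|(Z+V^Q)^+\|_\infty)$, giving the claimed bound. The only delicate step is this final algebraic identification: one must carefully account for every contribution to the zero-order coefficient after rewriting $L^{Q*}$ in non-divergence form, and in particular recognize how the extra second-derivative term $\sigma^{ik}\partial_{ij}\sigma^{jk}$ arises precisely from $\partial_i(\sigma^{ik}\partial_j\sigma^{jk})$ in $\partial_i\tilde b^i$.
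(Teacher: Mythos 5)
Your argument is correct, but note that the paper does not actually prove Lemma \ref{2-lem-3}: it only cites \cite[Lemma 3.1]{Zhang10}, whose proof is a direct stochastic-calculus computation --- one writes $K_t(x)=\det\big(\nabla X_t^{-1}(x)\big)$, derives an explicit exponential formula for it in terms of (backward) It\^o integrals of $\div(\sigma)$ and Lebesgue integrals of $\div(b)$ and the derivative terms along the inverse flow, and then bounds $\E K_t^p$ pointwise by splitting off an exponential supermartingale and applying H\"older's inequality. Your route is genuinely different: you keep the forward Jacobian $J_t$, remove the martingale part of $\log J_t$ by Girsanov, and identify $g_t=\E K_t^p$ as a solution of a linear parabolic equation whose zero-order coefficient is exactly $p$ times the bracket in the statement. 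I checked the algebra and it closes: the three contributions to $|\div(\sigma)|^2$ sum to $\tfrac12+(p-1)+\tfrac{(p-1)^2}{2}=\tfrac{p^2}{2}$, and $\partial_i\big(\sigma^{ik}\partial_j\sigma^{jk}\big)$ indeed produces both $|\div(\sigma)|^2$ and $\sum\sigma^{ik}\partial_{ij}\sigma^{jk}$. In a full write-up you should add two sentences that your sketch glosses over: (i) $g_t$ is a classical, not merely distributional, solution (this follows from the smoothness of the flow and finiteness of all moments, so the Feynman--Kac weak formulation can be differentiated); and (ii) the operator $\tfrac12 a^{ij}\partial_{ij}+c^i\partial_i$ may be degenerate and lives on all of $\R^d$, so the maximum principle needs the observation that $g_t\equiv1$ outside the compact supports of $\sigma$ and $b$, which reduces everything to the weak maximum principle on a bounded cylinder where $a\geq0$ and a nonpositive zero-order term suffice. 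What your approach buys is a transparent explanation of where the exponent in the bound comes from (it is the zero-order coefficient of the Fokker--Planck operator acting on $K_t^p$); what Zhang's buys is a pointwise bound obtained with no PDE regularity or maximum-principle discussion at all.
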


Let $b_n$ be defined as in \eqref{2-convolution}. Moreover, $\sigma_n:=\sigma\,\phi_n$. Then for every $n\geq1$, the functions $\sigma_n$ and $b_n$ are smooth with compact supports. Consider the following It\^o's SDE:
  \begin{equation}\label{smooth-SDE}
  \d X^n_t=\sigma_n(X^n_t)\,\d B_t+b_n(X^n_t)\,\d t,\quad X^n_0=x
  \end{equation}
which gives rise to a stochastic flow $(X^n_t)_{0\leq t\leq T}$ of diffeomorphisms on $\R^d$. Denote by $K^n_t$ the Radon--Nikodym density of $(X^n_t)_\#\L^d$ with respect to $\L^d$. Then as in the proof of \cite[Theorem 2.6]{Zhang13}, we can show that

\begin{lemma}[Uniform density estimate]\label{2-lem-4}
Under the conditions of Theorem \ref{1-thm-1}, for any $p\geq 1$, one has
  \begin{align}\label{2-lem-4.1}
  \Lambda_{p,T}:=\sup_{n\geq 1}\sup_{(t,x)\in[0, T]\times\R^d}\E [K^n_t(x)]^p<+\infty.
  \end{align}
\end{lemma}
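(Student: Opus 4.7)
The plan is to apply Lemma \ref{2-lem-3} to the smoothed equation \eqref{smooth-SDE} and show that the exponent appearing there,
\[
E_n := \Big\|\Big(\tfrac{p}{2}|\mathrm{div}(\sigma_n)|^2 - \mathrm{div}(b_n) + \sum_{k=1}^{m}\sum_{i,j=1}^{d}\bigl[\tfrac12(\partial_i\sigma_n^{jk})(\partial_j\sigma_n^{ik}) + \sigma_n^{ik}\partial_{ij}\sigma_n^{jk}\bigr]\Big)^{+}\Big\|_{\infty},
\]
is bounded uniformly in $n$. Once that is done, the conclusion $\sup_{n,t,x}\E[K^n_t(x)]^p \leq e^{pTE_n} \leq C_{p,T} < \infty$ follows at once.

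For the $\sigma_n$-contributions, I would use that $\sigma_n = \sigma \phi_n$ is simply a smooth truncation (no convolution), together with $\sigma \in C^2_b$. Leibniz's rule gives
\[
\partial_i \sigma_n^{jk} = (\partial_i \sigma^{jk})\phi_n + \sigma^{jk}\partial_i\phi_n,\qquad \partial_{ij}\sigma_n^{jk} = (\partial_{ij}\sigma^{jk})\phi_n + (\partial_i\sigma^{jk})(\partial_j\phi_n) + (\partial_j\sigma^{jk})(\partial_i\phi_n) + \sigma^{jk}\partial_{ij}\phi_n.
\]
Since $\|\phi_n\|_\infty \leq 1$, $\|\nabla \phi_n\|_\infty \leq C/n$, $\|\nabla^2 \phi_n\|_\infty \leq C/n^2$, and $\sigma$ together with its derivatives up to order $2$ are globally bounded, every one of $|\mathrm{div}(\sigma_n)|$, $|\partial_i\sigma_n^{jk}|$, $|\partial_{ij}\sigma_n^{jk}|$ is bounded by a constant independent of $n$. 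Hence the contribution of all $\sigma_n$-terms to $E_n$ is uniformly bounded.

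The remaining task is to bound $[\mathrm{div}(b_n)]^{-}$ uniformly in $n$, which I regard as the main obstacle. Writing
\[
\mathrm{div}(b_n) = \big(\mathrm{div}(b) \ast \chi_n\big)\phi_n + (b \ast \chi_n) \cdot \nabla\phi_n,
\]
the negative part of the first summand satisfies $\bigl[(\mathrm{div}(b) \ast \chi_n)\phi_n\bigr]^{-} \leq [\mathrm{div}(b)]^{-}\ast \chi_n \leq \|[\mathrm{div}(b)]^{-}\|_\infty$ by nonnegativity of $\chi_n$ and assumption \eqref{1-thm-1.2}. For the second summand, $\nabla\phi_n$ is supported in the annulus $B_{2n}\setminus B_n$ with $\|\nabla\phi_n\|_\infty \leq C/n$. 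For $n > r+1$ and any $x$ in this annulus, every $y$ with $|x-y| \leq 1/n$ satisfies $|y| > r$, so by $\bar b \in L^\infty(B_r^c)$ we have $|b(y)| \leq M(1+|y|) \leq M(1+|x|+1/n)$, hence
\[
|(b\ast \chi_n)(x) \cdot \nabla\phi_n(x)| \leq M(1+|x|+1/n)\cdot \tfrac{C}{n} \leq M(1+2n+1/n)\cdot \tfrac{C}{n} \leq C',
\]
uniformly in $n$ and $x$. Combining these bounds yields $\|[\mathrm{div}(b_n)]^{-}\|_\infty \leq C$ independent of $n$, completing the uniform control of $E_n$ and hence the proof.
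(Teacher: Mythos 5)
Your argument is correct and coincides with the paper's own proof: both apply Lemma \ref{2-lem-3} to the regularized coefficients, observe that the $\sigma_n$-terms are uniformly bounded because $\sigma\in C_b^2$ and $\phi_n$ has derivatives of order $O(n^{-1})$ and $O(n^{-2})$, and control $[\div(b_n)]^-$ through the decomposition $\div(b_n)=(\div(b)\ast\chi_n)\phi_n+\<b\ast\chi_n,\nabla\phi_n\>$, using $\|[\div(b)]^-\|_\infty<\infty$ for the first term and the linear growth of $b$ outside $B_r$ against the decay of $\nabla\phi_n$ on its support for the second. The only cosmetic difference is that the paper bounds $|\nabla\phi_n(x)|$ by $3\|\nabla\phi\|_\infty/(1+|x|)$ on $\{|x|\geq n\}$ rather than by $C/n$ on the annulus, which is an equivalent estimate.
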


\begin{proof}
By Lemma \ref{2-lem-3}, we have
  \begin{equation}\label{2-lem-4.2}
  \begin{split}
  &\hskip14pt \sup_{(t,x)\in[0,T]\times\R^d}\E [K_t^n(x)]^p\\
  &\leq \exp\bigg\{pT \bigg\|\Big(\frac p2 |\div(\sigma_n)|^2 -\div(b_n) +\sum_{k=1}^m \sum_{i,j=1}^d \Big[\frac12(\partial_i \sigma_n^{jk})(\partial_j \sigma_n^{ik}) + \sigma_n^{ik}\partial_{ij}\sigma_n^{jk}\Big] \Big)^+ \bigg\|_\infty\bigg\}.
  \end{split}
  \end{equation}
Under the condition (i) of Theorem \ref{1-thm-1}, it is clear that
  \begin{equation}\label{2-lem-4.3}
  |\div(\sigma_n)|^2 + \sum_{k=1}^m \sum_{i,j=1}^d \Big(\frac12 \big|\partial_i \sigma_n^{jk})(\partial_j \sigma_n^{ik})\big| + \big|\sigma_n^{ik}\partial_{ij}\sigma_n^{jk}\big| \Big) \leq C_1 <\infty,
  \end{equation}
where $C_1$ is independent of $n$. Recall the definition of $\phi_n$ above \eqref{2-convolution}. We have
  $$|\nabla\phi_n(x)|\leq \frac{\|\nabla\phi\|_\infty}n \ch_{\{n\leq |x|\leq 2n\}} \leq \frac{3\|\nabla\phi\|_\infty}{1+|x|} \ch_{\{|x|\geq n\}}.$$
By \eqref{1-thm-1.2}, for all $n\geq r+1$,
  \begin{equation}\label{2-lem-4.4}
  \begin{split}
  -\div(b_n)&=-[\div(b)\ast\chi_n]\phi_n -\<b \ast\chi_n,\nabla\phi_n\>\\
  &\leq \big\|[\div(b)]^-\big\|_\infty + 3\|\nabla\phi\|_\infty\frac{|b \ast\chi_n|}{1+|x|}\ch_{\{|x|\geq n\}}.
  \end{split}
  \end{equation}
For any $y\in B_1$, one has $1+|x-y|\leq 2+|x|\leq 2(1+|x|)$, therefore, for $|x|\geq n\geq r+1$,
  $$\frac{|(b \ast\chi_n)(x)|}{1+|x|} \leq 2(|\bar b|\ast \chi_n)(x)\leq 2\|\bar b\|_{L^\infty(B_r^c)}.$$
Combining this estimate with \eqref{2-lem-4.2}--\eqref{2-lem-4.4}, we finish the proof.
\end{proof}

Using Lemmas \ref{2-lem-2} and \ref{2-lem-4}, we can now show that the sequence of stochastic flows generated by \eqref{smooth-SDE} strongly converges to a random field $X:\Omega \times \R^d\to C([0,T],\R^d)$.

\begin{proposition}\label{2-prop-1}
Assume the conditions of Theorem \ref{1-thm-1}. Then there exists a random field $X:\Omega\times\R^d\to C([0,T],\R^d)$ such that for any $p\in[1,2)$ and $R>0$,
  $$\lim_{n\to\infty}\E\int_{B_R} \|X^n-X\|_{\infty,T}^p\,\d x=0.$$
\end{proposition}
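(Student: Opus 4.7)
The plan is to follow the same structure as Proposition \ref{sect-2-prop-1}: establish that the approximating flows $(X^n)$ form a Cauchy sequence in $L^1(\Omega \times B_R, C([0,T],\R^d))$ for a truncated norm, and then upgrade to $L^p$ convergence for $p\in[1,2)$ via uniform integrability. First, Lemma \ref{2-lem-4} (with $p=1$) gives $\sup_n\sup_{(t,x)}\E K^n_t(x)\leq\Lambda_{1,T}<\infty$. Since $\sigma_n=\sigma\phi_n$ inherits the bounds of $\sigma$ and, for $n\geq r+1$, $\|b_n\|_{L^1(B_r)}\leq\|b\|_{L^1(B_{r+1})}$, Lemma \ref{2-lem-1} yields
\[\sup_{n\geq1}\E\int_{B_R}\|X^n\|_{\infty,T}^2\,\d x\leq\hat C_R<\infty.\]
Setting $G^n_\lambda(\omega):=\{x:\|X^n_\cdot(\omega,x)\|_{\infty,T}\leq\lambda\}$, Chebyshev then gives $\E[\L^d(B_R\setminus G^n_\lambda)]\leq\hat C_R/\lambda^2$.

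Next, a direct convolution argument shows that on any ball where $\phi_n\equiv 1$, the drift $b_n$ satisfies $(\mathbf{H}_1)$ with $g^n_{2\lambda}=g_{2\lambda}\ast\chi_n$, and this regularized Osgood function is uniformly controlled by $\|g^n_{2\lambda}\|_{L^1(B_\lambda)}\leq\|g_{2\lambda}\|_{L^1(B_{\lambda+1})}$. Apply Lemma \ref{2-lem-2} to the pair $(X^n,X^l)$ with
\[\delta_{n,l}:=\bigl(\|\sigma_n-\sigma_l\|_{L^2(B_\lambda)}+\|b_n-b_l\|_{L^1(B_\lambda)}\bigr)^2,\]
which tends to $0$ as $n,l\to\infty$ because $\sigma_n\to\sigma$ in $L^2_{loc}$ and $b_n\to b$ in $L^1_{loc}$. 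This produces a constant $C_{d,R,T,\lambda}<\infty$, independent of $n,l$, for which
\[\E\int_{B_R\cap G^n_\lambda\cap G^l_\lambda}\psi_{\delta_{n,l}}\bigl(\|X^n-X^l\|^2_{\infty,T}\bigr)\,\d x\leq C_{d,R,T,\lambda}.\]
Splitting $\E\int_{B_R}(1\wedge\|X^n-X^l\|_{\infty,T}^2)\,\d x$ over the complement of $G^n_\lambda\cap G^l_\lambda$, over $\Sigma^{n,l}_\eta:=\{x:\|X^n-X^l\|_{\infty,T}\leq\eta\}$, and over its complement, we obtain for every $\eta\in(0,1)$
\[\E\int_{B_R}(1\wedge\|X^n-X^l\|_{\infty,T}^2)\,\d x\leq\frac{2\hat C_R}{\lambda^2}+\eta^2\L^d(B_R)+\frac{C_{d,R,T,\lambda}}{\psi_{\delta_{n,l}}(\eta^2)},\]
and letting $n,l\to\infty$, then $\eta\downarrow 0$, then $\lambda\uparrow\infty$, the right hand side vanishes.

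To finish, the last display shows $\|X^n-X^l\|_{\infty,T}\to 0$ in $(\P\otimes\L^d|_{B_R})$-measure. Combined with the uniform bound $\sup_{n,l}\E\int_{B_R}\|X^n-X^l\|_{\infty,T}^2\,\d x\leq 4\hat C_R$ from the first step and the triangle inequality, de la Vall\'ee Poussin's criterion yields uniform integrability of $\|X^n-X^l\|^p_{\infty,T}$ on $\Omega\times B_R$ for every $p\in[1,2)$. Convergence in measure plus uniform integrability gives $L^p$ convergence, so $(X^n)$ is Cauchy in $L^p(\Omega\times B_R,C([0,T],\R^d))$ and by completeness possesses a limit $X$. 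The main technical point is the preservation of $(\mathbf{H}_1)$ under the regularization with a uniform $L^1_{loc}$ bound on $g^n_{2\lambda}$; once this is in place the scheme of Section 3 adapts essentially verbatim to the present setting.
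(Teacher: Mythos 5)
Your proposal is correct and follows essentially the same route as the paper's proof: uniform density and second-moment bounds from Lemmas \ref{2-lem-4} and \ref{2-lem-1}, the stability estimate of Lemma \ref{2-lem-2} with $\delta=\delta_{n,l}$ (using that $b_n$ inherits $(\mathbf{H}_1)$ with $g^n_{2\lambda}=g_{2\lambda}\ast\chi_n$), the three-way splitting over $(G^n_\lambda\cap G^l_\lambda)^c$, $\Sigma^{n,l}_\eta$ and its complement, and the upgrade from convergence in measure to $L^p$ convergence for $p\in[1,2)$ via the uniform $L^2$ bound. The minor differences (order of the $\eta$ and $\lambda$ limits, invoking de la Vall\'ee Poussin explicitly) are immaterial.
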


\begin{proof}
The proof is similar to that of Proposition \ref{sect-2-prop-1}. For any $n\geq1$, we denote by $G_\lambda^{n}$ the level set of the flow $X^n_t$ on the interval $[0,T]$:
  $$G_\lambda^n(\omega)=\{x\in \R^d:\|X^n_\cdot(\omega,x)\|_{\infty,T}\leq \lambda\}.$$
Applying Lemmas \ref{2-lem-1} and \ref{2-lem-4} to the flows $X^n_t$ gives us
  \begin{equation}\label{2-prop-1.0}
  \sup_{n\geq 1} \int_{B_R} \E\sup_{t\leq T}|X^n_t(x)|^2\,\d x\leq C<+\infty,
  \end{equation}
in which $C$ depends on $d,T,R, \Lambda_{1,T}$ and $\|b\|_{L^1(B_{r+1})}$. Therefore,
  \begin{equation}\label{2-prop-1.1}
  \E\big[\L^d\big(B_R\cap (G_\lambda^n\cap G_\lambda^l)^c\big)\big]\leq \E\big[\L^d\big(B_R\cap (G_\lambda^n)^c\big)\big]+\E\big[\L^d\big(B_R\cap (G_\lambda^l)^c\big)\big] \leq \frac{2C}{\lambda^2}.
  \end{equation}

Next, by Lemmas \ref{2-lem-2} and \ref{2-lem-4}, we have
  \begin{equation}\label{2-prop-1.2}
  \begin{split}
  &\hskip14pt \E\int_{B_R\cap G^n_\lambda\cap G^l_\lambda}\psi_\delta\big(\|X^n-X^l\|^2_{\infty,T}\big)\,\d x\\
  & \leq C_{d,R,T}\bigg[1+ \|g^n_{2\lambda}\|_{L^1(B_\lambda)} +\frac1\delta\|\sigma_n-\sigma_l\|_{L^2(B_\lambda)}^2\\
  &\hskip50pt +\frac1{\sqrt\delta}\big(\|\sigma_n-\sigma_l\|_{L^2(B_\lambda)} + \|b_n- b_l\|_{L^1(B_\lambda)}\big) \bigg],
  \end{split}
  \end{equation}
Now we define
  $$\delta_{n,l}= \big(\|\sigma_n-\sigma_l\|_{L^2(B_\lambda)} + \|b_n- b_l\|_{L^1(B_\lambda)}\big)^2$$
which tends to $0$ as $n,l\to+\infty$. Taking $\delta=\delta_{n,l}$ in \eqref{2-prop-1.2}, we obtain that for any $n,l\geq1$,
  \begin{equation}\label{2-prop-1.5}
  \begin{split}
  \E\int_{B_R\cap G^n_\lambda\cap G^l_\lambda}\psi_{\delta_{n,l}} \big(\|X^n-X^l\|^2_{\infty,T}\big)\,\d x
  &\leq C_{d,R,T}\big[3 + \|g_{2\lambda}\|_{L^1(B_{\lambda+1})}\big]=: C_{d,R,T,\lambda}< \infty.
  \end{split}
  \end{equation}

Next for $\eta\in (0,1)$, set
  $$\Theta^{n,l}_\eta=\big\{(\omega,x)\in\Omega\times\R^d:\|X^n-X^l\|_{\infty,T}\leq \eta\big\}.$$
We have by \eqref{2-prop-1.1} that
  \begin{equation}\label{2-prop-1.6}
  \begin{split}
  &\hskip14pt \E\int_{B_R}\big(1\wedge\|X^n-X^l\|_{\infty,T}^2 \big)\,\d x\\
  &\leq \E\big[\L^d\big(B_R\cap (G_\lambda^n\cap G_\lambda^l)^c\big)\big]
  +\E\int_{B_R\cap G_\lambda^n\cap G_\lambda^l}\big(1\wedge\|X^n-X^l\|_{\infty,T}^2 \big)\,\d x\\
  &\leq \frac{2C}{\lambda^2} +\eta^2 \L^d(B_R)+\frac{1}{\psi_{\delta_{n,l}}(\eta^2)} \E\int_{(B_R\cap G_\lambda^n\cap G_\lambda^l)\setminus \Theta^{n,l}_\eta} \psi_{\delta_{n,l}}(\|X^n-X^l\|_{\infty,T}^2)\, \d x\\
  &\leq \frac{2C}{\lambda^2} +\eta^2 \L^d(B_R)+\frac{C_{d,R,T,\lambda}}{\psi_{\delta_{n,l}}(\eta^2)},
  \end{split}
  \end{equation}
where the last inequality follows from \eqref{2-prop-1.5}. First letting $n,l\to +\infty$, and then $\lambda\to +\infty$, $\eta\to 0$, we obtain
  $$\lim_{n,l\to+\infty}\E\int_{B_R} \big(1\wedge\|X^n-X^l\|_{\infty,T}^2 \big)\,\d x=0.$$
Due to \eqref{2-prop-1.0}, we have for any $p\in[1,2)$,
  $$\lim_{n,l\to+\infty}\E\int_{B_R} \|X^n-X^l\|_{\infty,T}^p \,\d x=0.$$
Hence there exists a random field $X:\Omega\times\R^d\to C([0,T],\R^d)$ such that
  \begin{equation*}
  \lim_{n\to +\infty}\E\int_{B_R} \|X^n-X\|_{\infty,T}^p \,\d x=0.
  \end{equation*}
The proof is complete.
\end{proof}

\begin{remark}
From the above proof, we see that for any $p\in[1,2)$, the limit random field belongs to the space $L^p(\Omega\times B_R, C([0,T],\R^d))$ for all $R>0$.
\end{remark}

Finally we can present the

\begin{proof}[Proof of Theorem \ref{1-thm-1}]
In view of Lemma \ref{2-lem-4} and Proposition \ref{2-prop-1}, we can directly apply \cite[Lemma 3.4]{Zhang13} to show that for all $t\in[0,T]$, there exists $K_t:\Omega\times\R^d\to\R_+$ such that $(X_t)_\#\L^d=K_t \L^d$; moreover, $\sup_{(t,x)\in [0,T]\times\R^d}\E K_t(x)<\infty$. By the same arguments as those at the end of Section 3, we can prove that $(X_t)_{0\leq t\leq T}$ constructed in Proposition \ref{2-prop-1} is an $\L^d$-a.e. stochastic flow associated to the It\^o SDE \eqref{Ito-SDE}.

The uniqueness of $\L^d$-a.e. stochastic flows follow from Lemma \ref{2-lem-2}. Indeed, let $(X_t)_{0\leq t\leq T}$ and $(\tilde X_t)_{0\leq t\leq T}$ be two generalized flows associated to the It\^o SDE \eqref{Ito-SDE}. For any $\lambda>R>0$, applying Lemma \ref{2-lem-2} with $\tilde\sigma=\sigma$ and $\tilde b=b$ leads to
  \begin{equation}\label{2-cor-1}
  \E\int_{B_R\cap G_\lambda}\psi_\delta\big(\|X-\tilde X\|^2_{\infty,T}\big)\,\d x \leq C_{d,T,R}\big(1+\|g_{2\lambda}\|_{L^1(B_\lambda)}\big)=: C_{d,T,R,\lambda} .
  \end{equation}
For any $\eta\in (0,1)$, let $\Theta_\eta(\omega):=\{x\in B_R: \|X_\cdot(\omega,x)- \tilde X_\cdot(\omega,x)\|_{\infty,T}\leq \eta\}$. Then, analogous to \eqref{2-prop-1.6}, we can deduce from Lemma \ref{2-lem-1} and \eqref{2-cor-1} that
  \begin{align*}
  \E\int_{B_R}\big(1\wedge \|X-\tilde X\|_{\infty,T}^2 \big)\,\d x \leq \frac{C_{T,R,r}}{\lambda^2} +\eta^2 \L^d(B_R) + \frac{C_{d,T,R,\lambda}}{\psi_\delta(\eta^2)}.
  \end{align*}
We first let $\delta\downarrow 0$, then let $\lambda\uparrow \infty$ and $\eta\downarrow 0$ to get
  $$\E\int_{B_R}\big(1\wedge \|X-\tilde X\|_{\infty,T}^2 \big)\,\d x =0,$$
which yields the uniqueness since $R>0$ is arbitrary.
\end{proof}

\section{Uniqueness of Fokker--Planck equations}

This last section is devoted to the proof of Theorem \ref{4-thm}. We first give some preparations which are mainly taken from \cite[Section 2]{Luo14}, see also the beginning parts of \cite[Sections 1 and 2]{RocknerZhang10}. Denote by $\mathbb W_T^n= C([0,T],\R^n)$ the space of continuous functions from $[0,T]$ to $\R^n$. Let $\F_t^n$ be the canonical filtration generated by coordinate process $W_t(w)=w_t,\, w\in \mathbb W_T^n$. Recall that $\mathcal P(\R^d)$ is the set of probability measures on $(\R^d,\mathcal B(\R^d))$. To fix the notations, we  state in detail the two well known notions of solutions to \eqref{Ito-SDE}.

\begin{definition}[Martingale solution]
Given $\mu_0\in \mathcal P(\R^d)$, a probability measure $P_{\mu_0}$ on $(\mathbb W_T^d,\F_T^d)$ is called a martingale solution to SDE \eqref{Ito-SDE} with initial distribution $\mu_0$ if $P_{\mu_0}\circ w_0^{-1} =\mu_0$, and for any $\varphi\in C_c^\infty(\R^d)$, $\varphi(w_t)-\varphi(w_0)-\int_0^t L \varphi(w_s)\,\d s$ is an $(\F_t^d)$-martingale under $P_{\mu_0}$, where $L$ is the second order differential operator given in \eqref{sect-4.1}.
\end{definition}

\begin{definition}[Weak solution]\label{sect-2-def-2}
Let $\mu_0\in\mathcal P(\R^d)$. The SDE \eqref{Ito-SDE} is said to have a weak solution with initial law $\mu_0$ if there exist a filtered probability space $(\Omega,\mathcal G,(\mathcal G_t)_{0\leq t\leq T},P)$, on which are defined a $(\mathcal G_t)$-adapted continuous process $X_t$ taking values in $\R^d$ and an $m$-dimensional standard $(\mathcal G_t)$-Brownian motion $W_t$, such that $X_0$ is distributed as $\mu_0$ and a.s.,
  $$X_t=X_0+\int_0^t \sigma(X_s)\,\d W_s+\int_0^t b(X_s)\,\d s,\quad \forall\, t\in[0,T].$$
We denote this solution by $\big(\Omega,\mathcal G,(\mathcal G_t)_{0\leq t\leq T},P;X,W\big)$.
\end{definition}

The next result can be found in the proof of \cite[Chap. IV, Theorem 1.1]{Ikeda}.

\begin{proposition}\label{sect-5-prop-1}
Given two weak solutions $\big(\Omega^{(i)},\mathcal G^{(i)},\big(\mathcal G_t^{(i)}\big)_{0\leq t\leq T},P^{(i)}; X^{(i)},W^{(i)}\big),\,i=1,2$ to SDE \eqref{Ito-SDE}, having the same initial law $\mu_0\in\mathcal P(\R^d)$, there exist a filtered probability space $(\Omega,\mathcal G,(\mathcal G_t)_{0\leq t\leq T},P)$, a standard $m$-dimensional $(\mathcal G_t)$-Brownian motion $W_t$ and two $\R^d$-valued $(\mathcal G_t)$-adapted continuous processes $Y^{(i)},\,i=1,2$, such that $P\big(Y^{(1)}_0=Y^{(2)}_0\big)=1$ and for $i=1,2$, $X^{(i)}$ and $Y^{(i)}$ have the same distributions in $\mathbb W_T^d$, and $\big(\Omega,\mathcal G, (\mathcal G_t)_{0\leq t\leq T},P;Y^{(i)},W\big)$ is a weak solution of SDE \eqref{Ito-SDE}.
\end{proposition}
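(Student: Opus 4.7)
This is the classical Yamada--Watanabe coupling of two weak solutions on a common probability space, and its proof does not rely on the specific Osgood/Sobolev hypotheses of the paper. The plan is to disintegrate each joint law $\mathrm{Law}(X^{(i)}, W^{(i)})$ against Wiener measure and the common initial law $\mu_0$, and then to glue the two disintegrations along a shared driving Brownian motion and a shared initial value $x_0 \sim \mu_0$.

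Concretely, for $i=1,2$ let $\nu^{(i)} \in \mathcal P(\mathbb W_T^d \times \mathbb W_T^m)$ be the joint law of $(X^{(i)}, W^{(i)})$. Its $\mathbb W_T^m$-marginal is Wiener measure $P_W$, and since $X^{(i)}_0$ is $\mathcal G^{(i)}_0$-measurable while $W^{(i)}$ is a $(\mathcal G^{(i)}_t)$-Brownian motion, the random variable $X^{(i)}_0$ is independent of $W^{(i)}$ with law $\mu_0$. Using Polishness of the path spaces, I would invoke the existence of regular conditional probabilities to produce a kernel $R^{(i)} \colon \R^d \times \mathbb W_T^m \to \mathcal P(\mathbb W_T^d)$ such that $R^{(i)}_{x_0, w}$ is concentrated on continuous paths starting at $x_0$ and
$$\nu^{(i)}(A \times B) \;=\; \int_B \int_{\R^d} R^{(i)}_{x_0, w}(A)\,\mu_0(\d x_0)\,P_W(\d w)$$
for all Borel $A \subset \mathbb W_T^d$ and $B \subset \mathbb W_T^m$. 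Set $\Omega := \mathbb W_T^d \times \mathbb W_T^d \times \mathbb W_T^m$, let $Y^{(1)}, Y^{(2)}, W$ be the coordinate processes, and define
$$P(\d x_1\,\d x_2\,\d w) \;:=\; \int_{\R^d} R^{(1)}_{x_0, w}(\d x_1)\,R^{(2)}_{x_0, w}(\d x_2)\,\mu_0(\d x_0)\,P_W(\d w).$$
Fubini then gives that each marginal $(Y^{(i)}, W)$ under $P$ has law $\nu^{(i)}$, so $Y^{(i)}$ and $X^{(i)}$ agree in distribution on $\mathbb W_T^d$, while the concentration of $R^{(i)}_{x_0, w}$ on paths starting at $x_0$ forces $Y^{(1)}_0 = Y^{(2)}_0 = x_0$ $P$-a.s.

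Equip $\Omega$ with the augmented natural filtration $\mathcal G_t := \sigma(W_s, Y^{(1)}_s, Y^{(2)}_s : s \leq t) \vee \mathcal N_P$. The final step is to verify that $W$ is a $(\mathcal G_t)$-Brownian motion and that each triple $(Y^{(i)}, W)$ still solves the SDE in the sense of Definition~\ref{sect-2-def-2}. The main obstacle is the first point: one must show that for $0 \leq s < t \leq T$, the increment $W_t - W_s$ is independent of $\mathcal G_s$ with the correct Gaussian law. The crucial technical input is a measurability property of the kernel, namely that the restriction of $R^{(i)}_{x_0, w}$ to path events up to time $t$ depends on $w$ only through $w|_{[0,t]}$; this encodes the fact that $W^{(i)}$ is a Brownian motion with respect to the joint natural filtration of $(X^{(i)}, W^{(i)})$ on the original space, and once in hand, the product structure of $P$ conditional on $(x_0, W)$ delivers the Brownian property of $W$ w.r.t.\ $(\mathcal G_t)$. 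The SDE identity for each $Y^{(i)}$ then transfers from the original space: the It\^o integral $\int_0^\cdot \sigma(Y^{(i)}_s)\,\d W_s$ constructed on $\Omega$ admits a version measurable with respect to $\sigma(Y^{(i)}, W)$ (as an a.s.\ limit of Riemann sums), so its joint law with $(Y^{(i)}, W)$ coincides with that of $\int_0^\cdot \sigma(X^{(i)}_s)\,\d W^{(i)}_s$ with $(X^{(i)}, W^{(i)})$ under $P^{(i)}$, and the relation $Y^{(i)}_t = Y^{(i)}_0 + \int_0^t \sigma(Y^{(i)}_s)\,\d W_s + \int_0^t b(Y^{(i)}_s)\,\d s$ holds $P$-a.s.
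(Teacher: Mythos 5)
Your construction is correct and is precisely the argument the paper relies on: the paper gives no proof of this proposition but cites it from the proof of Ikeda--Watanabe, Chap.~IV, Theorem~1.1, where exactly this coupling is carried out — disintegration of each joint law $\mathrm{Law}(X^{(i)},W^{(i)})$ into kernels $R^{(i)}_{x_0,w}$ over the common $(x_0,w)\sim\mu_0\otimes P_W$, gluing by the conditional product measure, and verification of the Brownian property via the adaptedness of the kernels. You have correctly identified the two genuinely technical points (the $\mathcal B_t$-measurability of $w\mapsto R^{(i)}_{x_0,w}$ on events up to time $t$, and the transfer of the stochastic integral as a law-determined functional of the pair of paths), so the proposal matches the cited proof.
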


The assertion below is a special case of \cite[Chap. IV, Proposition 2.1]{Ikeda}.

\begin{proposition}[Existence of martingale solution implies that of weak solution]\label{sect-5-prop-2}
Let $\mu_0\in\mathcal P(\R^d)$ and $P_{\mu_0}$  be a martingale solution of SDE \eqref{Ito-SDE}. Then there exists a weak solution $(\Omega,\mathcal G,(\mathcal G_t)_{0\leq t\leq T},P; X,W)$ to SDE \eqref{Ito-SDE} such that $P\circ X^{-1}=P_{\mu_0}$.
\end{proposition}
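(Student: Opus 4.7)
The plan is to realize the martingale solution $P_{\mu_0}$ on the canonical path space, extract from the coordinate process its continuous-local-martingale part together with its quadratic covariation, and then manufacture the driving Brownian motion by a martingale representation. The obstruction to direct inversion is that $\sigma$ may be degenerate; this is handled by enlarging the probability space with an independent $m$-dimensional Brownian motion and using a Borel-measurable pseudo-inverse of $\sigma$ to fill in the degenerate directions.

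First I would work on $\Omega^{(1)} := \mathbb{W}_T^d$ under $P_{\mu_0}$, with coordinate process $X_t(w) := w_t$ and filtration $\mathcal{F}_t^d$. Applying the martingale property of $P_{\mu_0}$ to functions $\varphi \in C_c^\infty(\R^d)$ that coincide with $x_i$, respectively $x_i x_j$, on arbitrarily large balls (and stopping at exit times from those balls to handle the lack of global integrability), I would deduce that
\begin{equation*}
M_t^i := X_t^i - X_0^i - \int_0^t b^i(X_s)\,\d s, \quad i=1,\dots,d,
\end{equation*}
is a continuous local martingale under $P_{\mu_0}$ and that its quadratic covariation is $\langle M^i,M^j\rangle_t = \int_0^t a^{ij}(X_s)\,\d s$. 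This is the standard polarization/truncation argument and is the only place where the martingale-solution hypothesis is used.

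Next I would enlarge the probability space. Let $\Omega^{(2)} := \mathbb{W}_T^m$ with Wiener measure $P^W$ and coordinate process $B$; set $\Omega := \Omega^{(1)} \times \Omega^{(2)}$, $P := P_{\mu_0} \otimes P^W$, and let $\mathcal{G}_t$ be the usual augmentation of $\mathcal{F}_t^d \otimes \mathcal{F}_t^m$. Under this product measure, $M$ (lifted from the first factor) and $B$ (from the second) are orthogonal continuous local martingales, so $\d\langle M,B\rangle_t = 0$. Let $\sigma^+\colon\R^d \to \mathcal M_{m\times d}$ be the Moore--Penrose pseudo-inverse, which is Borel-measurable and satisfies $\sigma\sigma^+\sigma=\sigma$, $(\sigma^+\sigma)^*=\sigma^+\sigma$, $(\sigma\sigma^+)^*=\sigma\sigma^+$. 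Define
\begin{equation*}
W_t := \int_0^t \sigma^+(X_s)\,\d M_s + \int_0^t \big(I_m - \sigma^+(X_s)\sigma(X_s)\big)\,\d B_s.
\end{equation*}
A direct covariation computation gives
\begin{equation*}
\d\langle W^k,W^l\rangle_t = \big[(\sigma^+\sigma)(\sigma^+\sigma)^*\big]^{kl}\,\d t + \big[(I_m-\sigma^+\sigma)(I_m-\sigma^+\sigma)^*\big]^{kl}\,\d t = \delta_{kl}\,\d t,
\end{equation*}
using the fact that $\sigma^+\sigma$ is an orthogonal projection in $\R^m$. By Lévy's characterization, $W$ is a standard $m$-dimensional $(\mathcal{G}_t)$-Brownian motion.

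Finally I would verify the SDE. Since $\sigma(I_m - \sigma^+\sigma) = \sigma - \sigma\sigma^+\sigma = 0$,
\begin{equation*}
\int_0^t \sigma(X_s)\,\d W_s = \int_0^t \sigma(X_s)\sigma^+(X_s)\,\d M_s.
\end{equation*}
To see this equals $M_t$, examine $N_t := M_t - \int_0^t \sigma\sigma^+(X_s)\,\d M_s = \int_0^t (I_d - \sigma\sigma^+)(X_s)\,\d M_s$; its quadratic variation matrix is $(I_d-\sigma\sigma^+)\,\sigma\sigma^*\,(I_d-\sigma\sigma^+)^*\,\d t \equiv 0$ because $(I_d-\sigma\sigma^+)\sigma = 0$, hence $N\equiv 0$. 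Combining this identity with the definition of $M$ yields
\begin{equation*}
X_t = X_0 + \int_0^t \sigma(X_s)\,\d W_s + \int_0^t b(X_s)\,\d s,
\end{equation*}
and $P\circ X^{-1} = P_{\mu_0}$ by construction, so $(\Omega,\mathcal{G},(\mathcal{G}_t),P;X,W)$ is the required weak solution. The main obstacle is the construction of $W$ in the degenerate setting: one must ensure Borel measurability of $\sigma^+$, the independence of $M$ and $B$ on the enlarged space, and the algebraic identities that make the Lévy characterization succeed — this is precisely the payoff from choosing the Moore--Penrose pseudo-inverse together with an independent Brownian noise on the orthogonal complement.
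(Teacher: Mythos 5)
Your proof is correct and is precisely the standard argument behind the citation the paper uses here: the paper proves this proposition simply by invoking \cite[Chap.~IV, Proposition 2.1]{Ikeda}, whose proof is exactly your construction (extract the local martingale part and its bracket from the martingale problem, enlarge the space by an independent Brownian motion, and recover the driving noise via a measurable pseudo-inverse plus L\'evy's characterization). No gaps; the Borel measurability of $\sigma^+$ and the vanishing of $\int_0^t (I_d-\sigma\sigma^+)(X_s)\,\d M_s$ are the right points to check, and you check them.
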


Finally we recall the following result which is a consequence of Figalli's representation theorem (see \cite[Theorem 2.6]{Figalli}) for solutions to the Fokker--Planck equation \eqref{FPE}.

\begin{proposition}\label{sect-5-prop-Figalli}
Assume that $\sigma$ and $b$ are two bounded measurable functions. Given $\mu_0\in\mathcal P(\R^d)$, let $\mu_t\in\mathcal P(\R^d)$ be a measure-valued solution to equation \eqref{FPE} with initial value $\mu_0$. Then there exists a martingale solution $P_{\mu_0}$ to SDE \eqref{Ito-SDE} with initial law $\mu_0$ such that for all $\varphi\in C_c^\infty(\R^d)$, one has
  $$\int_{\R^d} \varphi(x)\,\d\mu_t(x)=\int_{\mathbb W_T^d}\varphi(w_t)\,\d P_{\mu_0}(w),   \quad \forall\, t\in[0,T].$$
\end{proposition}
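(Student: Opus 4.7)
The plan is to deduce this statement directly from Figalli's representation theorem \cite[Theorem 2.6]{Figalli}, which is precisely designed for this situation: it asserts that under mild regularity/integrability assumptions on the coefficients, any measure-valued solution to the Fokker--Planck equation \eqref{FPE} is the family of one-time marginals of some probability measure on path space that solves the associated martingale problem.

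First I would verify that our setup falls under the scope of Figalli's theorem. The coefficients $\sigma$ and $b$ are assumed bounded and (Borel) measurable on $\R^d$, and $a=\sigma\sigma^\ast$ is therefore bounded as well; this places us well within the hypotheses of \cite[Theorem 2.6]{Figalli}. The measure-valued solution $(\mu_t)_{0\le t\le T}\subset \mathcal P(\R^d)$ is assumed to solve \eqref{FPE} in the weak sense against test functions in $C_c^\infty(\R^d)$ with $\mu_0$ as initial datum, which is exactly the notion of solution Figalli's theorem takes as input. A small check worth carrying out is that $\int_0^T \int_{\R^d} (|\sigma(x)|^2+|b(x)|)\,\d\mu_t(x)\,\d t<\infty$, which is automatic from the boundedness of $\sigma$ and $b$ combined with $\mu_t\in\mathcal P(\R^d)$.

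Next I would apply Figalli's theorem to obtain a Borel probability measure $P_{\mu_0}$ on $\mathbb W_T^d=C([0,T],\R^d)$ such that (i) $P_{\mu_0}\circ w_0^{-1}=\mu_0$, (ii) for every $t\in[0,T]$, $P_{\mu_0}\circ w_t^{-1}=\mu_t$, and (iii) for every $\varphi\in C_c^\infty(\R^d)$, the process
  $$M_t^\varphi(w):=\varphi(w_t)-\varphi(w_0)-\int_0^t L\varphi(w_s)\,\d s$$
is an $(\F_t^d)$-martingale under $P_{\mu_0}$. Property (iii) is the statement that $P_{\mu_0}$ is a martingale solution in the sense defined earlier in this section, and property (ii) is precisely the representation identity
  $$\int_{\R^d}\varphi(x)\,\d\mu_t(x)=\int_{\mathbb W_T^d}\varphi(w_t)\,\d P_{\mu_0}(w)$$
claimed in the statement, first for all $\varphi\in C_c^\infty(\R^d)$ and then, by a standard monotone class/density argument, for any bounded Borel $\varphi$ if needed.

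The proof is therefore essentially a citation, and the only delicate point I anticipate is matching vocabularies: Figalli's theorem is stated for continuity/Fokker--Planck equations in a form that needs to be identified with our parabolic operator $L^\ast$ defined via \eqref{sect-4.1}. I would make this matching explicit by writing out the duality pairing $\frac{\d}{\d t}\int\varphi\,\d\mu_t=\int L\varphi\,\d\mu_t$ in the form required by \cite{Figalli} (i.e.\ in terms of the diffusion matrix $a^{ij}$ and drift $b^i$), and note that no further regularity is required because $\sigma,b$ are bounded measurable. Once the hypotheses are identified, the conclusion of \cite[Theorem 2.6]{Figalli} supplies $P_{\mu_0}$ with all required properties, completing the proof.
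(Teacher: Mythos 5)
Your proposal is correct and matches the paper exactly: the paper offers no separate proof of this proposition, presenting it simply as a consequence of Figalli's representation theorem \cite[Theorem 2.6]{Figalli}, which is precisely the citation-plus-hypothesis-check you carry out (the integrability condition being automatic from boundedness of $\sigma$, $b$ and $\mu_t\in\mathcal P(\R^d)$).
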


Now we are ready to present

\begin{proof}[Proof of Theorem \ref{4-thm}]
We follow the idea of the proof of \cite[Theorem 1.2]{Luo14} (see also \cite[Theorem 1.1]{RocknerZhang10}). Let $u^{(i)}_t,i=1,2$ be two weak solutions to \eqref{FPE-1} in the class $L^\infty\big([0,T],L^1\cap L^\infty(\R^d)\big)$ with the same initial value $f$. Set $\d \mu_0(x)=f(x)\,\d x$. By Proposition \ref{sect-5-prop-Figalli}, there exist two martingale solutions $P^{(i)}_{\mu_0},i=1,2$ to the SDE \eqref{Ito-SDE} with the same initial probability distribution $\mu_0$, such that for all $\varphi\in C_c^\infty(\R^d)$,
  \begin{equation}\label{proof.0}
  \int_{\R^d}\varphi(x)u^{(i)}_t(x)\,\d x=\int_{C([0,T],\R^d)}\varphi(w_t)\,\d P^{(i)}_{\mu_0}(w),\quad i=1,2.
  \end{equation}
Proposition \ref{sect-5-prop-2} implies that there are two weak solutions $\big(\Omega^{(i)},\mathcal{G}^{(i)}, \big(\mathcal{G}_t^{(i)}\big)_{0\leq t\leq T},P^{(i)}; X^{(i)}, W^{(i)}\big)$, $ i=1,2$ to SDE \eqref{Ito-SDE} satisfying $P^{(i)}\circ \big(X^{(i)}\big)^{-1}=P^{(i)}_{\mu_0},\,i=1,2$. Finally, by Proposition \ref{sect-5-prop-1}, we can find a common filtered probability space $(\Omega,\mathcal{G},(\mathcal{G}_t)_{0\leq t\leq T},P)$, on which are defined a standard $m$-dimensional $(\mathcal{G}_t)$-Brownian motion $W$ and two continuous $(\mathcal{G}_t)$-adapted processes $Y^{(i)}\, (i=1,2)$, such that $P\big(Y^{(1)}_0=Y^{(2)}_0\big)=1$ and $Y^{(i)}$ is distributed as $P^{(i)}_{\mu_0}$ on $C([0,T],\R^d)$; moreover for $i=1,2$, it holds a.s. that
  $$Y^{(i)}_t=Y^{(i)}_0+\int_0^t b\big(Y^{(i)}_s\big)\,\d s +\int_0^t \sigma\big(Y^{(i)}_s\big)\,\d W_s\quad \mbox{for all } t\leq T.$$

Set $Z_t=Y^{(1)}_t-Y^{(2)}_t$ and for $\lambda>0$, define the stopping time $\tau_\lambda=\inf\big\{t\in[0,T]: \big|Y^{(1)}_t\big|\vee \big|Y^{(2)}_t\big|\geq \lambda\big\}$ with the convention that $\inf\emptyset=T$. Since the coefficients $\sigma$ and $b$ are bounded, it is clear that
  \begin{equation}\label{proof.1}
  \lim_{\lambda\to\infty} \tau_\lambda(\omega)=T\quad \mbox{almost surely}.
  \end{equation}
Fix $\delta>0$. We have by It\^o's formula that
  \begin{equation*}
  \begin{split}
  \psi_\delta \big(|Z_{t\wedge \tau_\lambda}|^2\big)
  &=\int_0^{t\wedge \tau_\lambda}\frac{2\big\langle Z_s,b\big(Y^{(1)}_s\big)-b\big(Y^{(2)}_s\big)\big\rangle
  +\big\|\sigma\big(Y^{(1)}_s\big)-\sigma\big(Y^{(2)}_s\big)\big\|^2}{\rho(|Z_s|^2)+\delta}\,\d s\\
  &\hskip13pt +2\int_0^{t\wedge \tau_\lambda}\frac{\big\langle Z_s,\big[\sigma\big(Y^{(1)}_s\big) -\sigma\big(Y^{(2)}_s\big)\big]\,\d W_s\big\rangle} {\rho(|Z_s|^2)+\delta}\\
  &\hskip13pt -2\int_0^{t\wedge \tau_\lambda} \rho'(|Z_s|^2)\frac{\big\|\big[\sigma\big(Y^{(1)}_s\big) -\sigma\big(Y^{(2)}_s\big)\big]^\ast Z_s\big\|^2} {({\rho(|Z_s|^2)+\delta})^2}\,\d s.
  \end{split}
  \end{equation*}
Since $\rho'\geq 0$, taking expectation on both sides with respect to $P$ yields
  \begin{equation}\label{proof.2}
  \begin{split}
  \E\psi_\delta \big(|Z_{t\wedge \tau_\lambda}|^2\big)
  &\leq\E \int_0^{t\wedge \tau_\lambda} \frac{\big\|\sigma\big(Y^{(1)}_s\big) -\sigma\big(Y^{(2)}_s\big)\big\|^2} {\rho(|Z_s|^2)+\delta}\,\d s +2\,\E\int_0^{t\wedge \tau_\lambda}\frac{\big\langle Z_s,b\big(Y^{(1)}_s\big)-b\big(Y^{(2)}_s\big)\big\rangle} {\rho(|Z_s|^2)+\delta}\,\d s\\
  & =: I_1+I_2.
  \end{split}
  \end{equation}
We shall estimate the two terms in the next two steps, respectively.

\emph{Step 1.} We first deal with the term $I_1$. Let $\sigma_n$ be defined as in \eqref{2-convolution}. By the triangular inequality, for any $n\geq 1$, we have
  \begin{equation}\label{proof.3}
  \begin{split}
  I_1&\leq 3\,\E \int_0^{t\wedge \tau_\lambda} \frac{\big\|\sigma_n\big(Y^{(1)}_s\big) -\sigma_n\big(Y^{(2)}_s\big)\big\|^2} {\rho(|Z_s|^2)+\delta}\,\d s\\
  &\hskip13pt +3\,\E \int_0^{t\wedge \tau_\lambda} \frac{\big\|\sigma_n\big(Y^{(1)}_s\big) -\sigma\big(Y^{(1)}_s\big)\big\|^2 +\big\|\sigma_n\big(Y^{(2)}_s\big)
  -\sigma\big(Y^{(2)}_s\big)\big\|^2} {\rho(|Z_s|^2)+\delta}\,\d s\\
  &=: I_{1,1}+I_{1,2}.
  \end{split}
  \end{equation}
To estimate the first term $I_{1,1}$, we shall use the hypothesis $(\mathbf{H}_\sigma)$. Fix any $x,y\in B_\lambda$, when $n>\lambda$, we have
  $$\|\sigma_n(x)-\sigma_n(y)\| = \|(\sigma\ast\chi_n)(x)- (\sigma\ast\chi_n)(y)\|\leq \int_{\R^d} \|\sigma(x-z) -\sigma(y-z)\|\chi_n(z)\,\d z.$$
Note that $(x-N)\cup (y-N)$ is a negligible set. For any $z\notin (x-N)\cup (y-N)$, one has $x-z\notin N$ and $y-z\notin N$. Thus by Cauchy's inequality and $(\mathbf{H}_\sigma)$,
  \begin{equation}\label{proof.3.5}
  \begin{split} 
  \|\sigma_n(x)-\sigma_n(y)\|^2&\leq \int_{\R^d} \|\sigma(x-z) -\sigma(y-z)\|^2 \chi_n(z)\,\d z\\
  &\leq \int_{\R^d} \big(\tilde g_{2\lambda}(x-z) + \tilde g_{2\lambda}(y-z)\big) \rho(|x-y|^2) \chi_n(z)\,\d z\\
  &= \big(\tilde g_{2\lambda}^n(x) + \tilde g_{2\lambda}^n(y)\big) \rho(|x-y|^2), 
  \end{split}
  \end{equation}
where $\tilde g_{2\lambda}^n =\tilde g_{2\lambda}\ast\chi_n$ is the convolution. Thus, if $n>\lambda$, then
  \begin{align*}
  I_{1,1}&\leq 3\,\E \int_0^{t\wedge \tau_\lambda} \big[ \tilde g_{2\lambda}^n\big(Y^{(1)}_s\big) +\tilde g_{2\lambda}^n\big(Y^{(2)}_s\big)\big] \,\d s\\
  &\leq 3\,\E \int_0^t \Big[\tilde g_{2\lambda}^n \big(Y^{(1)}_s\big) \ch_{\{|Y^{(1)}_s|\leq \lambda\}}+ \tilde g_{2\lambda}^n\big(Y^{(2)}_s\big) \ch_{\{|Y^{(2)}_s|\leq \lambda\}}\Big] \d s.
  \end{align*}
Recall that $Y^{(i)}_s$ has the same law with $X^{(i)}_s$, which is distributed as $u^{(i)}_s(x)\,\d x,\,i=1,2$. Consequently,
  \begin{align*}
  I_{1,1}&\leq 3\int_0^t \int_{B_\lambda} \tilde g_{2\lambda}^n(x) \big(u^{(1)}_s(x) +u^{(2)}_s(x)\big)\,\d x \d s\\
  &\leq 3\sum_{i=1}^2\big\|u^{(i)}\big\|_{L^\infty([0,T],L^\infty(\R^d))} \int_0^t \int_{B_\lambda} \tilde g_{2\lambda}^n(x) \,\d x \d s\\
  &\leq \tilde C T \|\tilde g_{2\lambda}\|_{L^1(B_{\lambda+1})}.
  \end{align*}
Note that the bound is independent of $n\geq 1$. In the same way,
  \begin{align*}
  I_{1,2}&\leq \frac 3{\delta}\sum_{i=1}^2\E\int_0^t \big\|\sigma_n\big(Y^{(i)}_s\big) -\sigma\big(Y^{(i)}_s\big)\big\|^2\ch_{\{|Y^{(i)}_s|\leq \lambda\}}\,\d s\\
  &\leq \frac 3{\delta}\sum_{i=1}^2 \int_0^t\int_{B_\lambda}\|\sigma_n(x) -\sigma(x)\|^2 u^{(i)}_s(x)\,\d x \d s\\
  &\leq \frac {3T}{\delta}\sum_{i=1}^2 \big\|u^{(i)}\big\|_{L^\infty([0,T],L^\infty(\R^d))} \int_{B_\lambda}\|\sigma_n(x)-\sigma(x)\|^2\,\d x
  \end{align*}
which vanishes as $n\to \infty$, since $\sigma\in L^{2}_{loc}(\R^d)$. Combining the above two estimates and letting $n\to\infty$ on the right hand side of \eqref{proof.3}, we obtain
  \begin{equation}\label{proof.4}
  I_1\leq \tilde C T \|\tilde g_{2\lambda}\|_{L^1(B_{\lambda+1})}=:\tilde C_{T,\lambda} <+\infty.
  \end{equation}

\emph{Step 2.} The estimate of the second term $I_2$ is analogous to that of $I_1$. Recall the definition of $b_n$ in \eqref{2-convolution}. Then similar to \eqref{proof.3}, we have
  \begin{equation}\label{proof.5}
  \begin{split}
  I_2 &= 2\,\E\int_0^{t\wedge\tau_\lambda}\frac{\big\<Z_s, b_n\big(Y^{(1)}_s\big)-b_n\big(Y^{(2)}_s\big)\big\>} {\rho(|Z_s|^2)+\delta}\,\d s\\
  &\hskip13pt + 2\,\E\int_0^{t\wedge\tau_\lambda}\frac{\big\<Z_s,b\big(Y^{(1)}_s\big) -b_n\big(Y^{(1)}_s\big)\big\>
  +\big\<Z_s,b_n\big(Y^{(2)}_s\big) - b\big(Y^{(2)}_s\big)\big\>}{\rho(|Z_s|^2)+\delta}\,\d s\\
  &=: I_{2,1}+I_{2,2}.
  \end{split}
  \end{equation}
The estimate of the term $I_{2,2}$ is analogous to that of $I_{1,2}$: since $\rho(s)\geq s\geq 0$,
  \begin{align*}
  I_{2,2} &\leq 2\sum_{i=1}^2 \E\int_0^{t\wedge \tau_\lambda} \frac{\big|b_n\big(Y^{(i)}_s\big) -b\big(Y^{(i)}_s\big)\big|}{\sqrt{|Z_s|^2+\delta}} \,\d s\\
  &\leq \frac 2{\sqrt\delta}\sum_{i=1}^2\E\int_0^t\big|b_n\big(Y^{(i)}_s\big) -b\big(Y^{(i)}_s\big)\big|\ch_{\{|Y^{(i)}_s|\leq \lambda\}}\,\d s\\
  &\leq \frac 2{\sqrt\delta} \sum_{i=1}^2 \int_0^t\int_{B_\lambda}|b_n(x)-b(x)|u^{(i)}_s(x)\,\d x \d s.
  \end{align*}
Since $b\in L^\infty(\R^d)$,
  \begin{equation}\label{proof.6}
  \begin{split}
  I_{2,2}&\leq \frac {2T}{\sqrt\delta} \sum_{i=1}^2 \big\|u^{(i)} \big\|_{L^\infty([0,T],L^\infty(\R^d))} \int_{B_\lambda}|b_n(x)-b(x)|\,\d x \to 0 \quad \mbox{as } n\to \infty.
  \end{split}
  \end{equation}

Finally we deal with the term $I_{2,1}$. By \eqref{Osgood-Sobolev.1}, similar computations as in \eqref{proof.3.5} lead to
  $$ |\<x-y, b_n(x)-b_n(y)\>|\leq \big(g^n_{2\lambda}(x)+g^n_{2\lambda}(y)\big)\rho(|x-y|^2) \quad \mbox{for all } x,y\in B_\lambda \mbox{ and } n>\lambda.$$
Therefore, for any $n>\lambda$,
  \begin{align*}
  I_{2,1}&\leq 2\,\E \int_0^{t\wedge\tau_\lambda}\big[g^n_{2\lambda}\big(Y^{(1)}_s\big)+ g^n_{2\lambda}\big(Y^{(2)}_s\big)\big]\,\d s\\
  &\leq 2\,\E \int_0^{t}\Big[ g^n_{2\lambda}\big( Y^{(1)}_s\big) \ch_{\{|Y^{(1)}_s|\leq \lambda\}}+ g^n_{2\lambda}\big(Y^{(2)}_s\big)\ch_{\{|Y^{(2)}_s|\leq \lambda\}}\Big]\,\d s.
  \end{align*}
Then, analogous to the above calculations,
  \begin{align*}
  I_{2,1}&\leq 2\sum_{i=1}^2 \int_0^{t}\! \int_{B_\lambda} g^n_{2\lambda}(x) u^{(i)}_s(x)\,\d x\d s\\
  &\leq 2T \sum_{i=1}^2 \big\|u^{(i)} \big\|_{L^\infty([0,T],L^\infty(\R^d))} \|g_{2\lambda}\|_{L^1(B_{\lambda+1})} =: \hat C_{T,\lambda}<+\infty.
  \end{align*}
This estimate together with \eqref{proof.5} and \eqref{proof.6} yields
  \begin{equation}\label{proof.7}
  I_2\leq \hat C_{T,\lambda}<+\infty.
  \end{equation}

\emph{Step 3.} Combining \eqref{proof.2}, \eqref{proof.4} and \eqref{proof.7}, we obtain
  $$\E\psi_\delta \big(|Z_{t\wedge \tau_\lambda}|^2\big)\leq \bar C_{T,\lambda}<+\infty,$$
where $\bar C_{T,\lambda}=\tilde C_{T,\lambda}+\hat C_{T,\lambda}$. Fix any $\eta>0$. The above inequality implies
  \begin{align*}
  P\big(|Z_{t\wedge \tau_\lambda}|>\eta\big) &\leq \frac{\E \psi_\delta\big(|Z_{t\wedge \tau_\lambda}|^2\big)}{\psi_\delta (\eta^2)}\leq \frac{\bar C_{T,\lambda}}{\psi_\delta (\eta^2)}.
  \end{align*}
Letting $\delta\downarrow0$, we arrive at $P\big(|Z_{t\wedge \tau_\lambda}|>\eta\big)=0$. Since $\eta$ can be arbitrarily small, it follows that $Z_{t\wedge \tau_\lambda}=0$ almost surely. Finally, we conclude from \eqref{proof.1} that for any $t\in[0,T]$, $Z_t=Y^{(1)}_t-Y^{(2)}_t=0$ a.s. The continuity of the two processes $Y^{(1)}_t$ and $Y^{(2)}_t$ yields that, almost surely, $Y^{(1)}_t=Y^{(2)}_t$ for all $t\in[0,T]$. Therefore $P^{(1)}_{\mu_0}=P^{(2)}_{\mu_0}$, which, together with the representation formula \eqref{proof.0}, leads to the uniqueness of solutions to \eqref{FPE-1}.
\end{proof}

\end{document}